\newcommand{\R}{\mathbb{R}}
\newcommand{\Rp}{\mathbb{R}_{\geq 0}}
\newcommand{\Rpp}{\mathbb{R}_{>0}}
\newcommand{\1}{\mathbbm{1}}
\renewcommand{\epsilon}{\varepsilon}
\newcommand{\A}{\mathcal{A}}
\newcommand{\D}{\mathcal{D}}
\newcommand{\tG}{\widetilde{G}}
\newcommand{\HH}{\mathcal{H}}
\newcommand{\m}{\overline{m}}
\newcommand{\M}{\mathcal{M}}
\newcommand{\p}{\varphi}
\renewcommand{\P}{\mathbb{P}}
\newcommand{\td}{\widetilde}
\renewcommand{\bar}{\overline}
\newcommand{\abs}[1]{\lvert#1\rvert}
\newcommand{\norm}[1]{\lVert#1\rVert}
\newcommand{\Norm}[1]{{\vert\kern-0.1ex\vert\kern-0.1ex\vert#1\vert\kern-0.1ex\vert\kern-0.1ex\vert}}
\newcommand{\dNorm}[2]{{\vert\kern-0.1ex\vert\kern-0.1ex\vert#2\vert\kern-0.1ex\vert\kern-0.1ex\vert}_{#1,*}}
\newcommand{\dec}{\searrow}
\newcommand{\ud}{\, \mathrm{d}}
\DeclareMathOperator{\E}{\mathbb{E}}
\DeclareMathOperator{\Par}{\mathcal{P}}
\DeclareMathOperator{\BigO}{\operatorname{\mathcal{O}}}
\DeclareMathOperator{\Ber}{\operatorname{Ber}}
\DeclareMathOperator{\Poi}{\operatorname{Poi}}
\DeclareMathOperator{\Exp}{\operatorname{Exp}}
\newcommand{\step}[1]{{\\[6pt]\emph{Step #1.}}}
\newtheorem{theorem}{Theorem}[section]
\newtheorem{lemma}[theorem]{Lemma}
\newtheorem{proposition}[theorem]{Proposition}
\newtheorem{corollary}[theorem]{Corollary}
\newtheorem*{acknowledgements}{Acknowledgements}
\theoremstyle{remark}
\newtheorem{remark}[theorem]{Remark}
\numberwithin{equation}{section}
\begin{document}

\title{\vspace{-1cm}Critical point representation of the mutual information in the sparse stochastic block model
}
\author{Tomas Dominguez\thanks{\textsc{\tiny Department of Mathematics, University of Toronto, tomas.dominguezchiozza@mail.utoronto.ca}} \and  Jean-Christophe Mourrat\thanks{\textsc{\tiny Department of Mathematics, ENS Lyon and CNRS, jean-christophe.mourrat@ens-lyon.fr}}}

\date{}
\maketitle
\vspace*{-0.7cm}
\begin{abstract}
We consider the problem of recovering the community structure in the stochastic block model. We aim to describe the mutual information between the observed network and the actual community structure as the number of nodes diverges while the average degree of a given node remains bounded. Our main contribution is a representation of the limit of this quantity, assuming it exists, as an explicit functional evaluated at a critical point of that functional. While we mostly focus on the two-community setting for clarity, we expect our method to be robust to model generalizations. We also present an example involving four communities where we show the invalidity of a plausible candidate variational formula for this limit.
\end{abstract}

\section{Introduction}
\label{sec:intro}

\subsection{Informal summary of the main results}

We consider a problem of community detection in the context of the stochastic block model. We focus on the case with two communities, where the community structure is encoded by a vector 
\begin{equation}
\sigma^*:=(\sigma^*_1,\ldots,\sigma^*_N)\in \Sigma_N:=\{-1,+1\}^N.
\end{equation}
We understand that individuals $i$ and $j$ belong to the same community if and only if $\sigma^*_i = \sigma^*_j$. The labels $(\sigma^*_i)_{i \leq N}$ are sampled independently from a Bernoulli distribution~$P^*$ with probability of success $p\in (0,1)$ and expectation $\m$,
\begin{equation}
p:=P^*\{1\}=\P\{\sigma^*_i=1\} \quad \text{and} \quad \m:=\E \sigma^*_i=2p-1.
\end{equation}
The assignment vector $\sigma^*$ is thus distributed according to the product law 
\begin{equation}
\sigma^*\sim P_N^*:=(P^*)^{\otimes N}.
\end{equation}
We fix parameters $c> 0$ and $\Delta \in (-c,c) \setminus \{0\}$. The case when $\Delta = 0$ would be trivial and we prefer to exclude it. Given the community structure $\sigma^*$, we sample a random undirected graph $\mathbf{G}_N:=(G_{ij})_{i,j\leq N}$ with vertex set $\{1,\ldots,N\}$ 
by stipulating that an edge between node $i$ and node $j$ is present with conditional probability
\begin{equation}\label{e.intro.SSBM.edge.probs}
\P\{G_{ij}=1 \mid \sigma^*\}:=\frac{c+\Delta \sigma_i^*\sigma_j^*}{N},
\end{equation}
and that this sampling is performed independently for each pair $i \neq j \in \{1,\ldots, N\}$. In words, we create a link between two vertices with probability $(c+\Delta)/N$ if those two vertices belong to the same community; and we create this link with probability $(c-\Delta)/N$ otherwise. In the case when $\Delta > 0$, it is more likely for an edge to be present between nodes belonging to the same community, and the model is called \emph{assortative}; otherwise it is called \emph{disassortative}. The inference task that we consider is that of recovering the community structure $\sigma^*$ given the observation of the graph $\mathbf{G}_N$. We focus in particular on characterizing the mutual information between the graph $\mathbf{G}_N$ and the community structure $\sigma^*$, in the limit of large $N$. 

We denote by $\M_+$ the space of non-negative measures over $[-1,1]$ with finite total mass, and by $\M_p$ the set of probability measures with mean $\m = 2p-1$, 
\begin{equation}
\M_p:=\bigg\{\nu\in \Pr[-1,1]\mid \int_{-1}^1 x\ud \nu(x)=\m \bigg\}.
\end{equation} 
Our main results will be stated in terms of a fixed point of the functional $\Gamma_{t,\mu} : \M_p \to \M_p$, which is defined for each $t \ge 0$ and $\mu \in \M_+$ 
according to the following procedure. Fixing $\nu \in \M_p$, we sample $\epsilon^*$ according to $P^*$, and conditionally on $\epsilon^*$, we let $\Pi_{t,\mu}(\nu)$ be a Poisson point process with intensity measure ${(c+\Delta \epsilon^* x)} \ud (\mu+t\nu)(x)$; we then set
\begin{equation}\label{e.intro.SBM.FP.operator}
\Gamma_{t,\mu}(\nu):=\text{Law}\Bigg(\frac{\int_{\Sigma_1} \epsilon e^{-\epsilon \m}\prod_{x\in \Pi_{t,\mu}(\nu)} (c+\Delta \epsilon x)\ud P^*(\epsilon)}{\int_{\Sigma_1} e^{-\epsilon \m}\prod_{x\in \Pi_{t,\mu}(\nu)} (c+\Delta \epsilon x)\ud P^*(\epsilon)}\Bigg).
\end{equation}
In other words, the quantity between the large parentheses in \eqref{e.intro.SBM.FP.operator} is a random variable, as it depends on the realization of $\epsilon^*$ and $\Pi_{t,\mu}(\nu)$, and we define $\Gamma_{t,\mu}(\nu)$ to be the law of this random variable.

Let $(\sigma^\ell)_{\ell \ge 1} = ((\sigma^\ell_i)_{i \le N})_{\ell \ge 1}$ be independent random variables that are sampled according to the conditional law of $\sigma^*$ given the observation of $\mathbf{G}_N$. 
Our first main result states that, up to a perturbation of the inference problem that does not change the asymptotic behavior of the mutual information, and up to the extraction of a subsequence in $N$, the asymptotic law of $(\sigma^\ell_i)_{i \le N, \ell \ge 1}$ can be sampled as follows. For some measure $\nu^*$ that is a fixed point of the operator $\Gamma_{1,0}$:
\begin{enumerate}[label = \textbf{S\arabic*}]
\item generate a sequence $(x_i)_{i\geq 1}$ of i.i.d.\@ random variables with law $\nu^*$;\label{gen.spin.array.1}
\item conditionally on $(x_i)_{i \ge 1}$, generate $\pm 1$-valued random variables $\sigma_i^\ell$ with mean $x_i$, independently over $i,\ell\geq 1$.\label{gen.spin.array.2}
\end{enumerate}

Our second main result states that, assuming that the limit mutual information exists, it can be expressed in terms of an explicit functional $\Par_{1,0}: \M_+ \to \R$ that is evaluated at a fixed point of the map $\Gamma_{1,0}$. 
The functional $\Par_{1,0}$ is closely related to the map $\Gamma_{1,0}$ since, at least formally, a measure is a critical point of $\Par_{1,0}$ if and only if it is a fixed point of the map $\Gamma_{1,0}$.

While we think of our results as being rather precise, since they substantially narrow down the space of possible asymptotic behaviors for the mutual information and for the conditional law of $\sigma^*$ given the observation of $\mathbf{G}_N$, we also wish to stress that our results do not provide a complete characterization of the limits of these objects. Indeed, this is due to the fact that the map~$\Gamma_{1,0}$ may admit several fixed points in general. We will show that the existence of a unique fixed point can be guaranteed in a regime of small signal-to-noise ratio, but we do not expect this uniqueness property to extend to arbitrary choices of parameters. 

A complete characterization of the limit of the mutual information has already been achieved in a number of cases. The disassortative case was settled in \cite{Abbe_disassortative, coja2018information} when $p = 1/2$, and in \cite{dominguez2024mutual} for arbitrary $p$. The technique in \cite{dominguez2024mutual} is based on partial differential equations (see also \cite{TD_JC_book} for a presentation of the approach), and the key point is that in the disassortative case, the non-linearity appearing in this equation is \emph{convex}. We expect this complete characterization to extend to settings with more than two communities, as long as the corresponding non-linearity remains convex. This would be analogous to \cite[Theorem~1.1]{chen2023free} in the context of spin glasses, a result which itself builds upon \cite{barcon, gue03, pan.aom, PanSKB, pan14,  pan.multi, pan.potts, pan.vec, Tpaper}. Moreover, whenever this convexity property holds, the limit mutual information can be expressed as a variational formula. From the point of view of partial differential equations, this relates to the validity of a Hopf-Lax representation of the solution, see \cite[Theorem~1.3]{TD_JC_HJ}. 

The assortative case presents a greater challenge. It was successfully resolved when $p = 1/2$ in \cite{yu2023ising}, building upon \cite{Abbe_assortative, kanade2016global, mossel2016belief, mossel2016local}. The approach in \cite{yu2023ising} relies upon the study of the fixed points of an operator that relates to the setting of broadcasting on trees, and the key step of the proof is to establish the uniqueness of a non-trivial fixed point. This uniqueness property does not generalize to models with more than two communities \cite{gu2023uniqueness}. Despite similarities, we could not identify a precise correspondence between the operator $\Gamma_{1,0}$ whose fixed points enter into our analysis and the operators appearing in \cite{yu2023ising}. Part of the difficulty is that the first step of the approach in these earlier works is to decompose the mutual information into an integral involving a family of new statistical inference problems with an additional survey mechanism. When one can guarantee that there is a unique non-trivial fixed point to each problem in this family, there is a tight relationship between the true community-detection problem and the problems involving broadcasting on trees. On the other hand, when uniqueness fails, this relationship seems less rigid to us, and we do not immediately see how one could derive results comparable to those obtained in the present paper in the context explored in these earlier works.

We expect the method presented here, primarily based on cavity calculations, to be robust to model variations such as situations involving more than two communities, and we also expect the lower bound on the mutual information obtained in \cite{dominguez2024mutual} to generalize similarly. The results we present here and those in \cite{dominguez2024mutual} are analogous to those obtained for spin glasses in \cite{chen2023free} and in \cite{mourrat2021nonconvex, mourrat2023free} respectively. We have preferred to stick to a two-community model here to facilitate the presentation, but as an illustration of the robustness of the approach, we will at least indicate as we proceed the few small adaptations that need to be implemented in order to cover the case when then measure $P^*$ is arbitrary with support in $[-1,1]$ in place of $\{-1,1\}$. 

In order to make further progress and obtain a full characterization of the limit mutual information in general, one must first ask what a good candidate for the limit could be. One proposal based on partial differential equations was put forward in \cite{dominguez2024mutual}. It would of course be convenient to identify a more explicit formulation of the candidate limit. For dense versions of the problem we consider, where the average degree of the graph diverges to infinity, a sort of central limit theorem takes place at the level of each node, and one can relate the problem to a simpler setup such as that of rank-one matrix estimation in the presence of additive Gaussian noise, as shown in \cite{deshpande2017asymptotic, lelarge2019fundamental} (see also \cite[Section~4.5]{TD_JC_book}). For the latter class of problems, a saddle-point variational formula was shown to be valid in a very wide range of settings in \cite{chen2022statistical} (see also \cite[Chapter~4]{TD_JC_book}), building upon \cite{barbier2016, barbier2019adaptive, barbier2017layered, chen2022hamilton, chen2021limiting, chen2022hamilton2, kadmon2018statistical, lelarge2019fundamental, lesieur2017statistical,  luneau2021mutual, luneau2020high, mayya2019mutual, miolane2017fundamental,  mourrat2020hamilton, mourrat2021hamilton, reeves2020information, reeves2019geometry}. From the point of view of partial differential equations, this corresponds to the possibility to write a Hopf formula based on the convexity of the initial condition (as opposed to the convexity of the non-linearity for the Hopf-Lax formula, cf.\ \cite[Theorems~3.8 and 3.13]{TD_JC_book}). However, the convexity of the relevant ``initial condition'' in our setting is invalid \cite{kireeva2023breakdown}, and as explained around \cite[(1.49)]{dominguez2024mutual}, this implies that this tentative saddle-point Hopf formula is invalid in our setting with sparse graph connectivity. An alternative candidate limit for the mutual information could be considered by extending the Hopf-Lax variational formula known to be valid in the disassortative case. While we could not rule this out in the two-community case, we show here that this candidate variational formula is invalid in general.
To do so, we focus on a version of the two-community problem in which the graph additionally possesses a bipartite structure. The model can be thought of as involving four communities, and it is inspired by the bipartite spin-glass model investigated in \cite{mourrat2021nonconvex} (see in particular \cite[Section~6]{mourrat2021nonconvex}). In this context, we show that a naive generalization of the formula valid in the standard disassortative two-community case would lead to a prediction that is in contradiction with the main results of this paper.

As was already said, the main focus of this paper is to obtain precise characterizations of the mutual information and conditional law of the community structure given the observation, in the limit of large system size. In the two-community case with $p = 1/2$, it was shown in \cite{Massoulie, Mossel_reconstruction, Mossel_detection} that there exists an efficient algorithm that allows one to recover non-trivial information about the community structure whenever $\Delta^2 > c$, and that it is information-theoretically impossible to do so otherwise. To the best of our knowledge, whether or not there exists an efficient algorithm that recovers as much of the signal as is information-theoretically possible is not known. In certain settings involving more than two communities, one expects the existence of a gap between the regime for which there exists an efficient algorithm for recovering a non-trivial fraction of the community structure, and the regime for which it is information-theoretically possible to do so. The results of \cite{Abbe_more_communities} vindicate a significant part of this picture. A precise characterization of the information-theoretic limit is not known however. These open questions are directly related to the main purposes of the present paper. Much of the recent work on the community detection problem has been triggered by \cite{DKMZ}, in which the results reviewed in this paragraph and much more had been anticipated using non-rigorous methods.

\subsection{Precise statement of the main results}

We now turn to a more precise description of our main results. As in \cite{dominguez2024mutual}, our starting point is to embed the community-detection problem under consideration into a richer family parametrized by a pair $(t,\mu) \in \Rp \times \M_+$. The conjecture in \cite{dominguez2024mutual} is that, up to a simple affine transformation, the limit $f(t,\mu)$ of the re-scaled mutual information between the signal and the observation solves an explicit Hamilton-Jacobi equation. As is well-known in finite dimensions, first-order Hamilton-Jacobi equations can be solved for a short time using the method of characteristics (see for instance \cite[Exercise~3.10 and solution]{TD_JC_book}). This method allows us to uncover the value of the solution to the equation along each characteristic line separately. At larger times though, difficulties occur because characteristic lines may intersect, and will typically not agree on what the value of the solution ought to be. Our main result can be interpreted as saying that, while this might indeed occur, there is always one characteristic line that prescribes the correct value for $f(t,\mu)$. In the language of \cite[Section~3.5]{TD_JC_book}, one may say that the graph of $f$ always belongs to the wavefront of the equation. 

As already stated, the central object of interest to us is the mutual information between the community structure $\sigma^*$ and the observed random graph~$\mathbf{G}_N$, that is,
\begin{equation}\label{e.intro.SBM.MI}
I(\mathbf{G}_N; \sigma^*):=\E\log\frac{\P(\mathbf{G}_N,\sigma^*)}{\P(\mathbf{G}_N)\P(\sigma^*)}=\E\int_{\R^N} \log\bigg(\frac{\ud P_{\sigma^*\mid \mathbf{G}_N}}{\ud P_N^*}(\sigma)\bigg)\ud P_{\sigma^* \mid \mathbf{G}_N}(\sigma),
\end{equation}
where $P_{\sigma^*\mid \mathbf{G}_N}$ denotes the conditional law of $\sigma^*$ given $\mathbf{G}_N$. 
The likelihood of the model is given by
\begin{equation}\label{e.intro.SBM.likelihood}
\P\big\{\mathbf{G}_N=(G_{ij})\mid \sigma^*=\sigma\big\}=\prod_{i<j}\Big(\frac{c+\Delta \sigma_i\sigma_j}{N}\Big)^{G_{ij}}\Big(1-\frac{c+\Delta \sigma_i\sigma_j}{N}\Big)^{1-G_{ij}},
\end{equation}
and Bayes' formula implies that the posterior of the model is the Gibbs measure
\begin{equation}\label{e.intro.SBM.posterior}
\P\big\{\sigma^*=\sigma \mid \mathbf{G}_N=(G_{ij})\big\}=\frac{\exp\big(H_N^\circ(\sigma)\big)P_N^*(\sigma)}{\int_{\Sigma_N}\exp\big(H_N^\circ(\tau)\big)\ud P_N^*(\tau)}
\end{equation}
associated with the Hamiltonian
\begin{equation}\label{e.intro.SBM.H0}
H_N^\circ(\sigma):=\sum_{i<j}\log\bigg[\big(c+\Delta \sigma_i\sigma_j\big)^{G_{ij}}\Big(1-\frac{c+\Delta \sigma_i\sigma_j}{N}\Big)^{1-G_{ij}}\bigg].
\end{equation}
Up to an error vanishing with $N$ and a simple additive constant, the normalized mutual information~\eqref{e.intro.SBM.MI} coincides with the free energy
\begin{equation}\label{e.intro.SBM.FE0}
\overline{F}_N^\circ:=\frac{1}{N}\E\log \int_{\Sigma_N}\exp H_N^\circ(\sigma)\ud P_N^*(\sigma).
\end{equation}
Indeed, one can observe that
\begin{equation}
I(\mathbf{G}_N;\sigma^*)=\binom{N}{2}\E\log(c+\Delta \sigma_1^*\sigma_2^*)^{G_{12}}\Big(1-\frac{c+\Delta \sigma_1^*\sigma_2^*}{N}\Big)^{1-G_{12}}-N\overline{F}_N^\circ;
\end{equation}
averaging with respect to the randomness of $G_{12}$ and Taylor-expanding the logarithm yields that
\begin{equation}\label{eqn: SBM MI and free energy 0}
\frac{1}{N}I(\mathbf{G}_N;\sigma^*)=\frac{1}{2}\E\big(c+\Delta \sigma_1^*\sigma_2^*\big)\log\big(c+\Delta \sigma_1^*\sigma_2^*\big)-\frac{c}{2}-\frac{\Delta \m^2}{2}-\overline{F}_N^\circ+\BigO\big(N^{-1}\big).
\end{equation}
In place of the mutual information, the main protagonist of this paper is in fact the free energy~$\bar F_N^\circ$ and its extension, which we now proceed to introduce. 

We begin by describing the enrichment of the free energy involving the parameter $t \ge 0$. For each $t\geq 0$, we introduce a random variable $\smash{\Pi_t\sim \Poi t\tbinom{N}{2}}$ as well as an independent family of i.i.d.\@ random matrices $\smash{(G^k)_{k \geq 1}}$ each having conditionally independent entries $\smash{(G_{i,j}^k)_{i,j\leq N}}$ taking values in $\{0,1\}$ with conditional distribution
\begin{equation}\label{e.intro.SBM.G.distribution}
\P\big\{G_{i,j}^k=1 \mid \sigma^*\big\}:=\frac{c+\Delta \sigma_{i}^*\sigma_{j}^*}{N}.
\end{equation}
Given a collection of random indices $\smash{(i_k,j_k)_{k\geq 1}}$ sampled uniformly at random from $\smash{\{1,\ldots,N\}^2}$, independently of the other random variables, we define the $t$-dependent Hamiltonian $H_N^t$ on $\Sigma_N$ by
\begin{equation}\label{e.intro.SBM.H.t}
H_N^t(\sigma):=\sum_{k\leq \Pi_t}\log \bigg[\big(c+\Delta \sigma_{i_k}\sigma_{j_k}\big)^{G^k_{i_k,j_k}}\Big(1-\frac{c+\Delta \sigma_{i_k}\sigma_{j_k}}{N}\Big)^{1-G^k_{i_k,j_k}}\bigg],
\end{equation}
and denote by 
\begin{equation}
\bar F_N(t):= \frac{1}{N} \E \log \int_{\Sigma_N}\exp H_N^t(\sigma)\ud P_N^*(\sigma)
\end{equation}
its corresponding free energy. Notice that this is the Hamiltonian associated with the task of inferring the signal $\sigma^*$ from the data 
\begin{equation}
\D_N^t:=\big(\Pi_t, (i_k,j_k)_{k\leq \Pi_t},(G^k_{i_k,j_k})_{k\leq \Pi_t}\big).
\end{equation}
By this we mean that the law of $\sigma^*$ given the observation of $\D_N^t$ is described by a formula analogous to that in \eqref{e.intro.SBM.posterior}, with $H_N^\circ$ replaced by $H_N^t$ there. 
By \cite[Appendix~A]{dominguez2024mutual}, the difference between the free energies $\bar F_N^\circ$ and $\bar F_N(1)$ converges to zero as $N$ tends to infinity. The main advantage of this construction is that it allows one to study derivatives of the free energy with respect to the ``time'' parameter $t$. It is also convenient to study derivatives of the free energy with respect to an additional parameter denoted by $\mu \in \M_+$. 
Given $\mu \in \M_+$, we let $(\Lambda_i(N\mu))_{i\leq N}$ be independent Poisson point processes with intensity measure $N\mu$, and we let the $\mu$-dependent Hamiltonian $H_N^\mu$ on $\Sigma_N$ be given by
\begin{equation}\label{e.intro.SBM.H.mu}
H_N^\mu(\sigma):=\sum_{i\leq N}\sum_{x\in \Lambda_i(N\mu)}\log \bigg[\big(c+\Delta \sigma_{i}x\big)^{G^x_{i}}\Big(1-\frac{c+\Delta \sigma_{i}x}{N}\Big)^{1-G^x_{i}}\bigg],
\end{equation}
where $(G_i^x)_{i\leq N}$ are conditionally independent random variables taking values in $\{0,1\}$ with conditional distribution
\begin{equation}
\P\{G_i^x=1\mid \sigma^*, x\}:=\frac{c+\Delta \sigma_i^* x}{N}.
\end{equation}
For a review of Poisson point processes, we refer the reader to \cite[Chapter 5]{TD_JC_book}. The function $H_N^\mu$ is the Hamiltonian associated with the task of inferring the signal $\sigma^*$ from the data
\begin{equation}
{\D}_N^{\mu}:=\big(\Lambda_i(N\mu), (G_{i}^x)_{x\in \Lambda_i(N\mu)}\big)_{i\leq N}.
\end{equation}
Finally, for each $(t,\mu)\in \Rp\times \M_+$, we introduce the enriched Hamiltonian
\begin{equation}\label{e.intro.SBM.en.H}
H_N^{t,\mu}(\sigma):=H_N^t(\sigma)+H_N^\mu(\sigma)
\end{equation}
as well as its associated free energy
\begin{equation}\label{e.intro.SBM.en.FE}
\bar F_N(t,\mu):=\frac{1}{N}\E\log \int_{\Sigma_N}\exp H_N^{t,\mu}(\sigma)\ud P_N^*(\sigma).
\end{equation}
The function $H_N^{t,\mu}$ is the Hamiltonian associated with inferring the signal $\sigma^*$ from the data
\begin{equation}\label{e.intro.SBM.en.data}
{\D}_N^{t,\mu}:=(\D_N^t, \D_N^{\mu}),
\end{equation}
where the randomness in these two datasets is taken to be independent conditionally on $\sigma^*$. We recall that by \cite[Appendix~A]{dominguez2024mutual}, the difference between the free energy $\bar F_N^\circ$ in \eqref{e.intro.SBM.FE0} and $\bar F_N(1,0)$ tends to zero as $N$ tends to infinity. The problem of finding the asymptotic value of the mutual information \eqref{e.intro.SBM.MI} is therefore a specific instance of the task of determining the limit of the enriched free energy \eqref{e.intro.SBM.en.FE}.

To state the main results of this paper, we introduce additional notation. We define the function $g:[-1,1]\to \R$ by
\begin{equation}\label{e.intro.SBM.g}
g(z):=(c+\Delta z)\big(\log(c+\Delta z)-1\big)=(c+\Delta z)\log(c)+c\sum_{n\geq 2}\frac{(-\Delta/c)^n}{n(n-1)}z^n-c,
\end{equation}
and for each measure $\mu \in \M_+$, we let $G_\mu:[-1,1]\to \R$ be the function obtained by marginalizing the kernel $(x,y)\mapsto g(xy)$,
\begin{equation}\label{e.intro.SBM.G.mu}
G_\mu(x):=\int_{-1}^1 g(xy)\ud \mu(y).
\end{equation}
Given $(t,\mu)\in \Rp\times \M_+$, we introduce the functional $\Par_{t,\mu}:\M_+\to \R$ defined by
\begin{equation}\label{e.intro.HJ.functional}
\Par_{t,\mu}(\nu):=\psi(\mu + t \nu)-\frac{t}{2} \int_{-1}^1 G_\nu(y)\ud \nu(y),
\end{equation}
where $\psi:\M_+\to \R$ denotes the function
\begin{align}\label{e.SBM.IC}
\psi(\mu):=-\mu[-1,&1] c+p\E \log \int_{\Sigma_1} \exp\Big(-\Delta \sigma \int_{-1}^1 x\ud \mu(x)\Big)\prod_{x\in \Pi_+(\mu)}(c+\Delta \sigma x)\ud P^*(\sigma)\notag\\
&+(1-p)\E \log \int_{\Sigma_1} \exp\Big(-\Delta \sigma \int_{-1}^1 x\ud \mu(x)\Big)\prod_{x\in \Pi_-(\mu)}(c+\Delta \sigma x)\ud P^*(\sigma)
\end{align}
for Poisson point processes $\Pi_{\pm}(\mu)$ with respective intensity measures $(c\pm \Delta x)\ud \mu(x)$ on $[-1,1]$. The function $\psi$ is the initial condition of the partial differential equation that was identified in \cite{dominguez2024mutual}, since it arises as the large-$N$ limit of $\bar F_N(0,\mu)$.
Our first main result states that the limit of the free energy, assuming it exists, is given by the functional \eqref{e.intro.HJ.functional} evaluated at one of its critical points. As mentioned in the previous subsection, it will be possible to recast the critical-point condition as a fixed-point equation for the operator $\Gamma_{t,\mu}$ defined in \eqref{e.intro.SBM.FP.operator}, so that our first main result reads as follows.

\begin{theorem}\label{t.intro.main}
Suppose that the sequence of enriched free energies $(\bar F_N)_{N\geq 1}$ converges pointwise to some limit $f:\Rp\times \M_+\to \R$. For every $(t,\mu)\in \Rp\times \M_+$, the map $\nu\mapsto \Gamma_{t,\mu}(\nu)$ admits a fixed point $\nu^* \in \M_p$ with the property that
\begin{equation}\label{e.intro.main}
f(t,\mu)=\Par_{t,\mu}(\nu^*).
\end{equation}
\end{theorem}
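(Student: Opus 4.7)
My plan is to carry out three steps: (i) produce $\nu^*$ from the asymptotic sampling representation S1--S2 applied at the target base point $(t,\mu)$; (ii) verify via a single-site Poisson cavity computation that $\nu^*$ is a fixed point of $\Gamma_{t,\mu}$; and (iii) derive \eqref{e.intro.main} by integrating a cavity formula for the derivatives of $\bar F_N$ along the linear path $s \mapsto \bigl(s,\, \mu+(t-s)\nu^*\bigr)$ in $\Rp \times \M_+$, which I recognize as a characteristic of the Hamilton--Jacobi equation underlying $\bar F_N$.

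For Step~(i), I apply the spin-array representation described informally in the introduction, but at the target point $(t,\mu)$ rather than $(1,0)$: this extension requires only that the small perturbation of the Hamiltonian used in the first main result can be tuned at each base point so as to leave $\lim \bar F_N$ unchanged while enforcing overlap concentration through the Nishimori identity. The outcome is a measure $\nu^*\in \M_p$ such that, along a subsequence in $N$, the enriched posterior restricted to $(\sigma^\ell_i)_{i,\ell}$ converges in distribution to S1--S2 driven by $\nu^*$, with $\nu^*$ identified as the asymptotic law of the local magnetization $\E[\sigma_1\mid\D_N^{t,\mu}]$. For Step~(ii), a single-site cavity computation at vertex~$1$ shows that the Poisson-many observations in $H_N^t$ and $H_N^\mu$ touching this vertex combine into a single Poisson process with conditional intensity $(c+\Delta\sigma_1^* x)\ud(\mu+t\nu^*)(x)$ on $[-1,1]$, once one drops the negligible $1/N$ corrections in \eqref{e.intro.SBM.H.t}--\eqref{e.intro.SBM.H.mu} and uses S1--S2 to replace neighboring spin averages by i.i.d.\@ samples of law $\nu^*$. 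Writing Bayes' rule for the posterior mean of $\sigma_1$ then reproduces the ratio inside \eqref{e.intro.SBM.FP.operator}, and equating its law to $\nu^*$ via Step~(i) yields the fixed-point equation $\nu^*=\Gamma_{t,\mu}(\nu^*)$.

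For Step~(iii), the key observation is that the intensity measure $(c+\Delta\sigma_1^* x)\ud(\mu+t\nu^*)(x)$ in the definition of $\Gamma$ depends only on the sum $\mu+t\nu^*$, which is preserved along the path; in particular $\nu^*$ remains a fixed point of $\Gamma_{s,\mu+(t-s)\nu^*}$ for every $s\in[0,t]$, so the cavity identification of the spin-array measure as $\nu^*$ can be propagated along the whole characteristic. Poisson cavity calculations for $\partial_t \bar F_N$ and for the directional $\mu$-derivative $D_{-\nu^*}\bar F_N$ then yield, along the subsequence,
\begin{equation*}
\partial_t \bar F_N(s,\mu_s) \to \tfrac{1}{2}\int_{-1}^1 G_{\nu^*}(x)\ud\nu^*(x), \qquad D_{-\nu^*}\bar F_N(s,\mu_s) \to -\int_{-1}^1 G_{\nu^*}(y)\ud\nu^*(y),
\end{equation*}
where $\mu_s := \mu+(t-s)\nu^*$ and the explicit constants involving $c$ and $\Delta\m$ are absorbed into the initial condition $\psi$ as in \eqref{e.SBM.IC}. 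Summing the two, the total $s$-derivative $\frac{\mathrm{d}}{\mathrm{d}s}\bar F_N(s,\mu_s)$ converges to $-\tfrac{1}{2}\int G_{\nu^*}(y)\ud\nu^*(y)$. Integrating over $s\in[0,t]$ and using the known identification $\bar F_N(0,\mu+t\nu^*) \to \psi(\mu+t\nu^*)$ gives $f(t,\mu)=\psi(\mu+t\nu^*) - \tfrac{t}{2}\int G_{\nu^*}(y)\ud\nu^*(y) = \Par_{t,\mu}(\nu^*)$, as desired.

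The most delicate step is Step~(i): the perturbation must simultaneously (a) leave $\lim\bar F_N$ unchanged, (b) force the Aldous--Hoover directing measure of the spin array to be deterministic so that S1--S2 with a fixed $\nu^*\in\M_p$ is meaningful, and (c) remain compatible with the Poisson cavity differentiations used in Step~(iii). A secondary difficulty is that $f$ is only known to exist pointwise and $\Gamma_{t,\mu}$ need not have a unique fixed point, so the derivative computations must be carried out at the $N$-finite level with uniform (in $s\in[0,t]$) control of remainders, and one must argue that the particular $\nu^*$ selected at the endpoint $(t,\mu)$ can be used consistently at every intermediate point of the characteristic before passing to the subsequential limit.
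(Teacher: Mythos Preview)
Your Steps~(i)--(ii) are broadly aligned with the paper, but Step~(iii) contains a genuine gap. You note correctly that $\nu^*$ is a fixed point of $\Gamma_{s,\mu_s}$ for every $s\in[0,t]$ (since $\mu_s+s\nu^*=\mu+t\nu^*$), and then assert that ``the cavity identification of the spin-array measure as $\nu^*$ can be propagated along the whole characteristic.'' This inference is invalid: being \emph{a} fixed point of $\Gamma_{s,\mu_s}$ does not make $\nu^*$ the actual asymptotic spin-array law under the Gibbs measure at $(s,\mu_s)$. When $\Gamma_{s,\mu_s}$ has several fixed points, the Gibbs measure at $(s,\mu_s)$ may well select a different one, and there is no mechanism in your argument that forces it to choose $\nu^*$. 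Your derivative limits are therefore unjustified: by \eqref{e.en.FE.t.der}--\eqref{e.en.FE.mu.der}, the quantities $\partial_t\bar F_N(s,\mu_s)$ and $D_\mu\bar F_N(s,\mu_s;\nu^*)$ involve $\E\langle R_{[n]}^2\rangle$ and $\E\langle R_{[n]}\rangle$ computed at the point $(s,\mu_s)$, and these converge to moments of $\nu^*$ only if the spin array at that point is generated by $\nu^*$. You flag this difficulty in your last sentence but do not resolve it; it is in fact the central obstruction of the whole problem.

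The paper sidesteps this by never moving along a characteristic. The cavity computation is carried out at the single point $(t,\mu)$ in the form $(N+1)\bar F_{N+1}(t,\mu)-N\bar F_N(t,\mu)=A_N(t,\mu)+o(1)$ (Lemma~\ref{l.cavity.representation}), and one shows $A_N\to\Par_{t,\mu}(\nu)$ whenever the spin array converges to some $\nu$ (Lemma~\ref{l.cavity.AN.limit}). Ces\`aro averaging then yields only a sandwich $\Par_{t,\mu}(\nu^-)\le f(t,\mu)\le\Par_{t,\mu}(\nu^+)$ for two possibly different subsequential limits $\nu^\pm$ (Proposition~\ref{p.cavity.limit.bounds}). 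The key extra ingredient---absent from your outline---is to force $\nu^-=\nu^+$: at $\mu\in\mathrm{Reg}(\M_+)$ where $f(t,\cdot)$ is Gateaux differentiable (such points are dense by an infinite-dimensional Rademacher argument, Proposition~\ref{p.regularity.differentiability}), both $\nu^\pm$ satisfy $G_{\nu^\pm}=D_\mu f(t,\mu,\cdot)$, and injectivity of $\nu\mapsto G_\nu$ (Lemma~\ref{l.G.injective}) gives $\nu^-=\nu^+$. The fixed-point property and the extension to general $\mu$ then follow from a further cavity computation for $D_\mu f$ (Lemma~\ref{l.GD.of.limit.FE.as.GD.of.IC}) and Lipschitz continuity. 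Your characteristic-integration idea is exactly how the paper obtains the disassortative \emph{lower bound} (Proposition~\ref{p.intro.main.disassortative}), where the sign of $\Delta$ turns the integrand identity into an inequality; without that sign condition it does not deliver an equality.
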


Our second result concerns the asymptotic behavior of the conditional law of the community structure $\sigma^*$ given the observation. We use the bracket notation $\langle \cdot \rangle$ to denote the expectation with respect to the conditional law of $\sigma^*$ given the observation, with $\sigma$ being the canonical random variable under $\langle \cdot \rangle$. Explicitly, this means that for every function $h : \Sigma_N \to \R$, we have
\begin{equation}
\label{e.first.def.gibbs}
\langle h(\sigma) \rangle := \frac{\int_{\Sigma_N} h(\sigma) \exp H_N^{t,\mu}(\sigma) \, \ud P_N(\sigma)}{\int_{\Sigma_N} \exp H_N^{t,\mu}(\sigma) \, \ud P_N(\sigma)}.
\end{equation}
We stress that the expectation $\langle \cdot \rangle$ is itself random, since it depends on the observation. We denote by $(\sigma^\ell)_{\ell \ge 1} = ((\sigma^\ell_i)_{i \le N})_{\ell \ge 1}$ independent copies, often called replicas, of the random variable $\sigma$ under $\langle \cdot \rangle$. We also introduce the set $\mathrm{Reg}(\M_+)$ of measures in $\M_+$ that admit a smooth and strictly positive density with respect to the Lebesgue measure on $[-1,1]$,
\begin{equation}\label{e.intro.Reg.M+}
\mathrm{Reg}(\M_+):=\big\{\mu\in \M_+\mid \mu \text{ admits a smooth and strictly positive density on } [-1,1]\big\}.
\end{equation}
Our next result states that for most choices of $(t,\mu)\in \Rp\times \M_+$, we can identify a fixed point of the map $\nu \mapsto \Gamma_{t,\mu}(\nu)$ for which the equality \eqref{e.intro.main} holds, and for which we can also identify the limit law of $(\sigma^\ell)_{\ell \ge 1}$ after a small perturbation of the inference problem. More precisely, we will show that this can be done at any point $(t,\mu)\in \Rp\times \mathrm{Reg}(\M_+)$ at which the limit free energy is Gateaux differentiable; we refer to Section \ref{sec:regularity} for a definition of Gateaux differentiability and a precise discussion of why the limit free energy is Gateaux differentiable at most points. The convergence in law of $(\sigma^\ell_i)_{i \le N, \ell \ge 1}$ is in the sense of finite-dimensional distributions. This result requires that we slightly modify the inference problem by adding a small amount of side information; the exact quantity we study then is defined in \eqref{e.cavity.pert.mod.H}-\eqref{e.cavity.mod.pert.FE}. 

\begin{theorem}\label{t.intro.main.limit.array}
Suppose that the sequence of enriched free energies $(\bar F_N)_{N\geq 1}$ converges pointwise to some limit $f:\Rp\times \M_+\to \R$. If $(t,\mu)\in \Rp\times \mathrm{Reg}(\M_+)$ is such that $f(t,\cdot)$ is Gateaux differentiable at $\mu$, then we can identify a probability measure $\nu^* \in \M_p$ such that the Gateaux derivative density of $f$ at $(t,\mu)$ is $G_{\nu^*}$. Moreover, the probability measure $\nu^*$ is a fixed point of the map $\Gamma_{t,\mu}$, and we have
\begin{equation}
f(t,\mu)=\Par_{t,\mu}(\nu^*).
\end{equation}
Finally, there exists a sequence $\smash{\big(N_k,\lambda^{N_k}\big)_{k\geq 1}}$ such that $\smash{(N_k)_{k\geq 1}}$ increases to infinity, the spin array $(\sigma^\ell_{i})_{i \le N_k, \ell \ge 1}$ converges in law under $\E \langle \cdot \rangle$ with the modified Hamiltonian \eqref{e.cavity.pert.mod.H} with perturbation parameters $\smash{(\lambda^{N_k})_{k\geq 1}}$, and the limit law is generated by $\nu^*$ according to \eqref{gen.spin.array.1}-\eqref{gen.spin.array.2}. The perturbation parameters $\lambda^{N_k}$ introduced in the modified Hamiltonian \eqref{e.cavity.pert.mod.H} are sufficiently small that the difference between the free energy \eqref{e.intro.SBM.en.FE} and the free energy with the perturbed Hamiltonian \eqref{e.cavity.pert.mod.H} tends to zero as $N$ tends to infinity.
\end{theorem}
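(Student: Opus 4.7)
The plan is to apply Theorem~\ref{t.intro.main} to extract an initial fixed point, and then refine the choice using the Gateaux differentiability hypothesis to simultaneously obtain the description of the limit spin array and the identification of the derivative density with $G_{\nu^*}$. The starting point is to perturb the enriched Hamiltonian \eqref{e.intro.SBM.en.H} by adding the term \eqref{e.cavity.pert.mod.H} with a small random parameter $\lambda^N$, chosen along a subsequence so that the free energy of the perturbed model still converges to $f(t,\mu)$, and so that the perturbation enforces the Bayesian analogue of the Ghirlanda--Guerra identities for the relevant functional overlap order parameter. Such perturbations are standard in cavity arguments, but here they must be tuned carefully so that along some subsequence $(N_k)_{k\ge 1}$ the joint law of $(\sigma_i^\ell)_{i\le N_k,\,\ell\ge 1}$ under $\E\langle\cdot\rangle$ converges to an exchangeable array enjoying enough concentration to admit a suitable representation of Aldous--Hoover type.

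Given this subsequential limit, the first substantive task is to show that the limiting array admits the conditionally i.i.d.\@ structure of \eqref{gen.spin.array.1}--\eqref{gen.spin.array.2}, up to possible randomness in the generating measure. Exchangeability in the two indices together with the Nishimori identity (which equates the conditional law of $\sigma^*$ given the observation with the law under $\langle\cdot\rangle$), the Ghirlanda--Guerra-like identities produced by the perturbation, and the sparse locally tree-like geometry of the Poisson terms in both $H_N^t$ and $H_N^\mu$ should yield an \emph{a priori} random probability measure $\bar\nu\in\M_p$ such that conditionally on $\bar\nu$ and on i.i.d.\@ samples $(x_i)_{i\ge 1}$ from $\bar\nu$, the spins $\sigma_i^\ell$ are independent $\pm 1$-valued with mean $x_i$. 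The hypothesis $\mu\in\mathrm{Reg}(\M_+)$ enters here by guaranteeing that the background provided by $H_N^\mu$ is rich enough to decorrelate distinct sites in the limit.

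The next step is to promote $\bar\nu$ to a deterministic measure $\nu^*$ using Gateaux differentiability of $f(t,\cdot)$ at $\mu$. Differentiating $\bar F_N(t,\mu)$ in a direction $\tau\in\M_+$ via Poisson calculus applied to \eqref{e.intro.SBM.H.mu} expresses the directional derivative as the integral against $\tau$ of an expected log-likelihood ratio for adding one Poisson point; using the Nishimori identity and the array representation above, this integrand converges to $\E\,G_{\bar\nu}$. The linearity in $\tau$ of the Gateaux derivative of $f(t,\cdot)$ then forces $\bar\nu$ to be deterministic when tested against the family of kernels $y\mapsto g(xy)$ generating $G_{\bar\nu}$, which by the non-degeneracy built into \eqref{e.intro.SBM.g} yields $\bar\nu=\nu^*$ almost surely for some deterministic $\nu^*\in\M_p$, simultaneously identifying the Gateaux derivative density with $G_{\nu^*}$. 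The identity $f(t,\mu)=\Par_{t,\mu}(\nu^*)$ then follows by integrating the identified derivative from the $\mu=0$ initial condition $\psi$, whose derivative structure is compatible with the definition \eqref{e.intro.HJ.functional} of $\Par_{t,\mu}$.

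Finally, the fixed-point property $\Gamma_{t,\mu}(\nu^*)=\nu^*$ is obtained by the standard cavity computation: conditioning on the Poisson arrivals incident to a tagged site $i$ in the Hamiltonian $H_N^{t,\mu}$ and using the local independence across sites guaranteed by \eqref{gen.spin.array.1}--\eqref{gen.spin.array.2}, the conditional law of $\sigma_i$ under $\langle\cdot\rangle$ in the large-$N$ limit is exactly the random variable inside the large parentheses of \eqref{e.intro.SBM.FP.operator}, and must coincide with $\nu^*$ by the array representation. The main obstacle I anticipate is the second step: proving that the limit array admits a one-level conditionally i.i.d.\@ representation rather than a more intricate hierarchical one. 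This is where the precise design of the perturbation \eqref{e.cavity.pert.mod.H} is crucial, and where one must leverage the Bayes-optimal Nishimori symmetry together with the sparse, locally tree-like nature of the model to rule out any ultrametric structure in the limit.
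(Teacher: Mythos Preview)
Your overall architecture is close to the paper's, but there is a genuine gap in how you obtain $f(t,\mu)=\Par_{t,\mu}(\nu^*)$. You write that this ``follows by integrating the identified derivative from the $\mu=0$ initial condition $\psi$''. This does not work: the identity $D_\mu f(t,\mu,\cdot)=G_{\nu^*}$ holds only at the single point $\mu$, the measure $\nu^*$ itself depends on $\mu$, and $f(t,0)$ is not $\psi(0)$ when $t>0$. Integrating along a path from $0$ to $\mu$ would involve a different $\nu^*_{s\mu}$ at each point $s\mu$ and would not produce $\Par_{t,\mu}$ evaluated at the terminal $\nu^*$. The paper instead uses the cavity computation (Lemmas~\ref{l.cavity.representation} and~\ref{l.cavity.AN.limit}) to write $(N+1)\bar F_{N+1}-N\bar F_N$ asymptotically as $\Par_{t,\mu}(\nu)$ for whichever $\nu$ generates the limit spin array along the chosen subsequence; this yields the sandwich $\Par_{t,\mu}(\nu^-)\le f(t,\mu)\le\Par_{t,\mu}(\nu^+)$ of Proposition~\ref{p.cavity.limit.bounds}. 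One then shows $\nu^\pm=\nu^*$ by computing $D_\mu\bar F_N'(t,\mu,\lambda^{N_k^\pm},\cdot)$ along the sandwiching subsequences, passing to the limit via Proposition~\ref{p.regularity.cont.of.derivative}, and invoking the injectivity of $\nu\mapsto G_\nu$ (Lemma~\ref{l.G.injective}). The cavity step is not dispensable: it is the only link in the argument between the \emph{value} of $f$ and the functional $\Par_{t,\mu}$.

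Two smaller corrections. First, the perturbation already forces all multioverlaps $R_{1,\ldots,n}$ to concentrate around deterministic constants, so the generating measure is deterministic from the outset (Proposition~\ref{p.multioverlap.asymptotic.spin}); you do not need Gateaux differentiability to ``de-randomize'' $\bar\nu$, and in this Bayes-optimal setting there is no hierarchical or ultrametric obstruction to rule out. The role of differentiability is rather to pin down \emph{which} deterministic measure arises, via convergence of $D_\mu\bar F_N$ to $D_\mu f$. Second, the hypothesis $\mu\in\mathrm{Reg}(\M_+)$ is not used to decorrelate sites; it enters so that $\mu\pm\eta\nu\in\M_+$ for small $\eta>0$ and $\nu\in\mathrm{Reg}(\M_+)$, which is what makes the convexity argument behind Proposition~\ref{p.regularity.cont.of.derivative} go through and yields $D_\mu\bar F_{N_k}(t,\mu,\cdot)\to D_\mu f(t,\mu,\cdot)$.
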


One limitation of Theorems \ref{t.intro.main} and \ref{t.intro.main.limit.array} is that they assume the existence of the limit free energy, which we do not know a priori. To a large extent, this is a side effect of the fact that we do not know how to single out the correct critical point in general, as was discussed in the previous subsection. Correspondingly, this assumption can be removed whenever we can guarantee the uniqueness of a fixed point to $\Gamma_{t,\mu}$. As shown in Lemma \ref{l.limit.GD.measure.is.generator}, the second part of Theorem \ref{t.intro.main.limit.array} in fact does not require that we assume that the free energy converges. Indeed, to show this result we only need to ensure that the free energy converges along a subsequence, and this can always be obtained using the Arzelà-Ascoli theorem. 

Another limitation of Theorem~\ref{t.intro.main.limit.array} is that we need to add a small amount of side information to the inference problem, as described in \eqref{e.cavity.pert.mod.H}-\eqref{e.cavity.mod.pert.FE}, in order to identify the limit law of the posterior distribution. The perturbation is small in the sense that the amount of additional information per unknown tends to zero in the limit of large system size. Alternatively, we could consider a broader family of inference problems with side information, and then identify the limit law of the posterior distribution for most choices of the parameters defining the inference problem; see \cite[Theorem~1.4]{chen2023free} for a result in this spirit. We point out however that this perturbation is necessary for Theorem~\ref{t.intro.main.limit.array} to be valid. Indeed, even in the simpler setting of recovery of matrix tensor products, adding a small amount of side information is in some cases necessary in order to resolve certain symmetries of the model, see for instance \cite[Example~2]{reeves2020information}. While it would be interesting to make further progress on this point, we are not aware of precise results even in this simpler context.

The next proposition certifies the uniqueness of a fixed point in a regime of small signal-to-noise ratio.

\begin{proposition}\label{p.intro.main.small.t}
There exists $C<+\infty$ such that for all $t<C^{-1}$ and $\mu\in \M_+$, the map $\nu\mapsto \Gamma_{t,\mu}(\nu)$ admits a unique fixed point $\nu^*$ in $\M_p$. Moreover, the sequence of enriched free energies $\smash{(\bar F_N(t,\mu))_{N\geq 1}}$ converges to $\Par_{t,\mu}(\nu^*)$,
\begin{equation}\label{e.intro.lim.FE.t.small}
\lim_{N\to +\infty}\bar F_N(t,\mu)=\Par_{t,\mu}(\nu^*).
\end{equation}
\end{proposition}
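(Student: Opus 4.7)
The plan is to prove the proposition in three stages: (i) show that $\Gamma_{t,\mu}$ is a strict contraction on $\M_p$ for $t$ sufficiently small, which gives both existence and uniqueness of a fixed point $\nu^*$; (ii) use a compactness argument to extract subsequences along which $\bar F_N(t,\mu)$ converges; and (iii) apply Theorem \ref{t.intro.main} to identify every subsequential limit with $\Par_{t,\mu}(\nu^*)$, thereby upgrading to full convergence.

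For stage (i), I would equip $\M_p$ with the Wasserstein-$1$ distance $W_1$ on $[-1,1]$ and aim for the bound
\begin{equation}
W_1\big(\Gamma_{t,\mu}(\nu_1), \Gamma_{t,\mu}(\nu_2)\big) \leq C t \, W_1(\nu_1,\nu_2)
\end{equation}
with $C$ independent of $\mu \in \M_+$. The key observation is that, conditionally on $\ep^*$, the process $\Pi_{t,\mu}(\nu)$ decomposes in law as the superposition of a $\nu$-independent Poisson point process $\Pi_\mu$ with intensity $(c+\Delta \ep^* x)\ud \mu(x)$ and an independent ``perturbation'' $\Pi^t_\nu$ with intensity $t(c+\Delta \ep^* x)\ud \nu(x)$. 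An optimal $W_1$-coupling of $\nu_1$ and $\nu_2$ lifts to a coupling of $\Pi^t_{\nu_1}$ and $\Pi^t_{\nu_2}$ in which points are paired at Wasserstein cost $W_1(\nu_1,\nu_2)$, with an extra unmatched mass of order $t \, W_1(\nu_1,\nu_2)$. Since the factors $c+\Delta \ep x$ are uniformly bounded above and below away from zero (as $|\Delta|<c$ and $|x|\le 1$), and since the ratio of integrals over $\Sigma_1$ appearing in \eqref{e.intro.SBM.FP.operator} is uniformly bounded by $1$ with a derivative that is uniformly controlled, the change produced by inserting, deleting, or perturbing a single Poisson point is $O(1)$. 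Summing over the $O(t)$ expected number of perturbed points and using the bounded-difference structure of the log-likelihood ratio gives the desired estimate. The Banach fixed point theorem on the complete metric space $(\M_p, W_1)$ then yields a unique fixed point $\nu^*\in\M_p$ for $t<C^{-1}$, after one checks that $\Gamma_{t,\mu}$ maps $\M_p$ into itself --- which follows from the Nishimori identity, since $\Gamma_{t,\mu}(\nu)$ is the law of a conditional expectation $\E[\ep^*\mid \mathrm{data}]$ and hence has mean $\m$.

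For stages (ii) and (iii), the family $(\bar F_N)_{N\geq 1}$ is equi-Lipschitz in $(t,\mu)$ with respect to a suitable norm on $\Rp\times\M_+$ (the standard a priori estimates on the Hamiltonians \eqref{e.intro.SBM.H.t}--\eqref{e.intro.SBM.H.mu}, and their derivatives in $t$ and in $\mu$, are of the same type as in \cite{dominguez2024mutual}). Given any subsequence, the Arzelà-Ascoli theorem extracts a further subsequence $(N_k)$ along which $\bar F_{N_k}$ converges pointwise to a limit $f : \Rp\times\M_+\to \R$. Applying Theorem \ref{t.intro.main} at the point $(t,\mu)$ under consideration produces a fixed point $\tilde \nu \in \M_p$ of $\Gamma_{t,\mu}$ with $f(t,\mu)=\Par_{t,\mu}(\tilde\nu)$; by the uniqueness obtained in stage (i), $\tilde\nu=\nu^*$, so every subsequential limit equals $\Par_{t,\mu}(\nu^*)$. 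This establishes the convergence \eqref{e.intro.lim.FE.t.small}.

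The main obstacle lies in stage (i), specifically in obtaining a contraction constant that is truly uniform in $\mu\in\M_+$. When the total mass of $\mu$ is large, the Gibbs-type ratio defining $\Gamma_{t,\mu}(\nu)$ becomes very concentrated, and one must verify that this concentration does not amplify the small perturbation carried by $\Pi^t_\nu$. The crucial structural fact is that, regardless of $\mu$, the quantity inside the law on the right-hand side of \eqref{e.intro.SBM.FP.operator} is a posterior mean of a $\pm 1$-valued random variable and therefore automatically lies in $[-1,1]$, while the factor $(c+\Delta\ep x)/(c-|\Delta|)$ is bounded by a constant depending only on $c$ and $\Delta$. Exploiting these two uniform bounds is what ultimately tames the potential blow-up, but the estimate has to be arranged carefully --- ideally by writing the response to a single added or displaced Poisson point as a ratio whose numerator and denominator are both $\mu$-integrals of order one, so that cancellations are visible.
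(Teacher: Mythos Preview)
Your overall strategy matches the paper's: contraction of $\Gamma_{t,\mu}$ for small $t$ plus Banach fixed point (the paper's Lemma~\ref{l.t.small.FP.operator.props} and Lemma~\ref{l.t.small.unique.FP}), then Arzel\`a--Ascoli, then identification of every subsequential limit with $\Par_{t,\mu}(\nu^*)$. Your sketch of stage~(i) is also the paper's argument: the $\mu$-part of the Poisson process is held fixed in the coupling, one interpolates only the $t\nu$-part, and the derivative along the interpolation is a sum over the $\Pi_t$ points of terms that are uniformly bounded because $\langle\epsilon\rangle\in[-1,1]$ and $c-|\Delta|\le c+\Delta\epsilon x\le c+|\Delta|$. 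This yields the $Ct$ contraction constant uniformly in $\mu$, resolving exactly the concern you raise at the end.

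There is, however, a genuine gap in stage~(iii). Theorem~\ref{t.intro.main} is stated under the hypothesis that the \emph{full} sequence $(\bar F_N)_{N\ge1}$ converges, so it cannot be applied as a black box to a subsequential limit $f$ produced by Arzel\`a--Ascoli. The obstruction is not cosmetic: the proof of Theorem~\ref{t.intro.main} goes through Lemma~\ref{l.main.assuming.differentiability}, which identifies the cavity measures $\nu^\pm$ of Proposition~\ref{p.cavity.limit.bounds} with the measure $\nu$ determined by $G_\nu=D_\mu f(t,\mu,\cdot)$. That identification uses convergence of $D_\mu\bar F_N$ to $D_\mu f$ along the cavity subsequences (Proposition~\ref{p.regularity.cont.of.derivative}), which in turn needs $\bar F_N\to f$ along those subsequences---and the cavity subsequences produced by the Ces\`aro argument in Proposition~\ref{p.cavity.limit.bounds} are not a priori contained in your Arzel\`a--Ascoli subsequence $(N_k)$.

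The paper closes this gap not by invoking Theorem~\ref{t.intro.main} but by rerunning its ingredients directly: it takes the cavity bounds $\Par_{t,\mu}(\nu^-)\le f(t,\mu)\le\Par_{t,\mu}(\nu^+)$ from Proposition~\ref{p.cavity.limit.bounds}, arranges the relevant subsequences to lie inside $(N_k)$, and then shows via Lemmas~\ref{l.limit.GD.measure.is.generator}, \ref{l.GD.of.limit.FE.as.GD.of.IC} and \ref{l.equivalence.of.FP.eqns} that each of $\nu^\pm$ is a fixed point of $\Gamma_{t,\mu}$, first at points of differentiability of $f$ and then everywhere by the Lipschitz continuity of $(t,\mu)\mapsto\nu_{t,\mu}$ (Lemma~\ref{l.t.small.unique.FP}). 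Uniqueness of the fixed point then forces $\nu^-=\nu^+=\nu^*$. Your argument becomes correct once you replace the appeal to Theorem~\ref{t.intro.main} with this direct route through the cavity bounds and the fixed-point identification.
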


For completeness, we also explain how to recover the variational formula already obtained in~\cite{Abbe_disassortative, coja2018information} (for $p = 1/2$) and in~\cite{dominguez2024mutual} for the limit free energy in the disassortative case.

\begin{proposition}\label{p.intro.main.disassortative}
In the disassortative case, that is when $\Delta\leq 0$, we have for every $(t,\mu)\in \Rp\times \M_+$ that
\begin{equation}\label{e.intro.main.disassortative}
\lim_{N \to +\infty} \bar F_N(t,\mu)= \sup_{\nu\in \M_p}\Par_{t,\mu}(\nu).
\end{equation}
\end{proposition}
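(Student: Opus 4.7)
The plan is to derive \eqref{e.intro.main.disassortative} by combining the critical-point representation of Theorem~\ref{t.intro.main} with a concavity property of $\Par_{t,\mu}$ that is specific to the disassortative regime. Because Theorem~\ref{t.intro.main} presupposes convergence of $(\bar F_N)$, I would first extract, via an Arzel\`a-Ascoli argument of the kind alluded to after Proposition~\ref{p.intro.main.small.t}, a subsequence $(N_k)$ along which $\bar F_{N_k}$ converges pointwise to some $f : \Rp \times \M_+ \to \R$. Applying Theorem~\ref{t.intro.main} to $f$ then produces, at each $(t,\mu)$, a fixed point $\nu^* \in \M_p$ of $\Gamma_{t,\mu}$ satisfying $f(t,\mu) = \Par_{t,\mu}(\nu^*)$; by the fixed-point/critical-point correspondence mentioned in the introduction, $\nu^*$ is simultaneously a critical point of $\Par_{t,\mu}$ on $\M_p$.

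The crux is to show that $\Par_{t,\mu}$ is concave on the convex set $\M_p$ when $\Delta \le 0$. Writing $\Par_{t,\mu}(\nu) = \psi(\mu + t\nu) - \frac{t}{2}\int_{-1}^1 G_\nu(y)\,\ud\nu(y)$, I would treat the two pieces separately. For the quadratic piece, inserting the series expansion \eqref{e.intro.SBM.g} and using that $\nu[-1,1]=1$ and $\int x\,\ud\nu = \m$ are fixed on $\M_p$ gives
\begin{equation*}
\int_{-1}^1\!\int_{-1}^1 g(xy)\,\ud\nu(x)\,\ud\nu(y) = \mathrm{const} + c\sum_{n\ge 2}\frac{(-\Delta/c)^n}{n(n-1)}\bigg(\int_{-1}^1 x^n\,\ud\nu(x)\bigg)^{\!2}.
\end{equation*}
Because $\Delta \le 0$, the coefficients $(-\Delta/c)^n$ are non-negative, so the right-hand side is a non-negatively weighted sum of squares of linear functionals of $\nu$, hence convex in $\nu$; consequently $-\frac{t}{2}$ times this expression is concave in $\nu$. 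For the term $\psi(\mu + t\nu)$, I would invoke concavity of $\psi$ on $\M_+$, which is the standard concavity of a Bayesian log-partition function in its observation intensity. This can be verified either by differentiating twice along a convex interpolation between two measures in $\M_+$ and applying a Nishimori-type identity to show the second derivative is non-positive, or by passing to the limit the corresponding concavity of the finite-$N$ free energies $\bar F_N(0,\cdot)$.

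Concavity of $\Par_{t,\mu}$ on $\M_p$ forces the critical point $\nu^*$ to be a global maximizer, whence $f(t,\mu) = \Par_{t,\mu}(\nu^*) = \sup_{\nu \in \M_p}\Par_{t,\mu}(\nu)$. Since this value depends only on $(t,\mu)$ and not on the extracted subsequence, every subsequential limit of $(\bar F_N)$ coincides with the stated supremum, and the full sequence converges, giving \eqref{e.intro.main.disassortative}. The step I expect to be most delicate is the rigorous verification of concavity of $\psi$: although morally a standard Bayesian fact, the Poisson-integral form \eqref{e.SBM.IC} --- with its $-\Delta\sigma\int x\,\ud\mu$ tilt in the exponent and its $(c\pm\Delta x)\,\ud\mu$ intensities for $\Pi_\pm(\mu)$ --- does not immediately expose it, so the interpolation-plus-Nishimori computation must be executed carefully to handle all of these dependencies on $\mu$ at once.
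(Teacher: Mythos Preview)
Your approach has a genuine gap: the claim that $\psi$ is concave on $\M_+$ is false. In fact the paper proves the opposite in Proposition~\ref{p.regularity.en.FE.convex}: for every $\nu\in\Pr[-1,1]$ the map $r\mapsto \bar F_N(t,\mu+r\nu)$ is \emph{convex}, and passing to the limit gives that $r\mapsto\psi(\mu+r\nu)$ is convex along every ray. What is concave in the observation intensity is the mutual information, and the free energy differs from it by a sign (plus an affine term), see~\eqref{eqn: SBM MI and free energy 0} and the analogous identity~\eqref{e.regularity.en.FE.convex.MI} used in the proof of Proposition~\ref{p.regularity.en.FE.convex}. Consequently $\Par_{t,\mu}(\nu)=\psi(\mu+t\nu)-\tfrac{t}{2}\iint g\,\ud\nu\,\ud\nu$ is the sum of a convex and a concave functional, and there is no reason for a critical point to be a global maximizer; your step ``critical point of concave function $\Rightarrow$ maximizer'' breaks down.

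The paper's argument avoids this entirely. It does not try to show $\Par_{t,\mu}$ is concave. Instead, for each fixed $\nu\in\M_p$ it runs a Guerra-type interpolation $\p(u)=\bar F_N(tu,\mu+t(1-u)\nu)$ between $\bar F_N(t,\mu)$ and (the finite-$N$ approximation of) $\psi(\mu+t\nu)$. Using the explicit derivative formulas \eqref{e.en.FE.t.der}--\eqref{e.en.FE.mu.der}, one finds
\[
\p'(u)=-\frac{t}{2}\int_{-1}^1 G_\nu\,\ud\nu+\frac{tc}{2}\sum_{n\ge 2}\frac{(-\Delta/c)^n}{n(n-1)}\,\E\big\langle (R_{[n]}-\E x_1^n)^2\big\rangle,
\]
and the disassortativity $\Delta\le 0$ makes the remainder non-negative. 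Integrating gives $\liminf_N \bar F_N(t,\mu)\ge \Par_{t,\mu}(\nu)$ for every $\nu$, hence the lower bound; the matching upper bound is the cavity estimate of Proposition~\ref{p.cavity.limit.bounds}. So disassortativity is used not to make $\Par_{t,\mu}$ concave, but to make the interpolation error term in $\p'(u)$ have a sign.
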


As explained in the previous subsection, we will also show the invalidity of a candidate variational formula for the limit free energy. Since this involves a variant of the model with an additional bipartite structure, we prefer to postpone a precise description of the setting and result to Section~\ref{sec:CP_sel}.



\subsection{Organization of the paper}

In Section \ref{sec:regularity}, we recall the expressions for the derivatives of the free energy obtained in \cite{dominguez2024mutual}. We then leverage these to establish the uniform Lipschitz continuity of the free energy, the almost everywhere differentiability of the limit free energy, and the asymptotic behavior of the derivative of the free energy. 
Section \ref{sec:concentration} is a brief overview of the main multioverlap concentration result in~\cite{barbier2022strong} which implies that the spin array sampled from the asymptotic Gibbs measure is generated according to \eqref{gen.spin.array.1}-\eqref{gen.spin.array.2} for some measure $\nu^*$ in $\M_p$. In Section \ref{sec:cavity}, the cavity computations leading to the functional $\Par_{t,\mu}$ in \eqref{e.intro.HJ.functional} are performed. In Section \ref{sec:CP_rep}, the cavity representation is combined with the multioverlap concentration result in Section \ref{sec:concentration} and the regularity properties of the free energy obtained in Section \ref{sec:regularity} to prove Theorems \ref{t.intro.main} and \ref{t.intro.main.limit.array}. Section \ref{sec:special.cases} is devoted to the proofs of Propositions~\ref{p.intro.main.small.t} and \ref{p.intro.main.disassortative}. Finally, in Section \ref{sec:CP_sel}, it is argued that, unlike in the disassortative setting, one should not in general expect the limit free energy to be given by evaluating the functional $\Par_{t,\mu}$ at the measure $\nu \in \M_p$ which maximizes its value.

\begin{acknowledgements}
We would like to warmly thank Dmitry Panchenko and Jean Barbier for sharing their notes \cite{PanNote} on the free energy in the disassortative sparse stochastic block model with us, which helped us with many of the computations in Section \ref{sec:cavity}. We also warmly thank Hong-Bin Chen for pointing out an error in an earlier version of the proof of Proposition~\ref{p.intro.main.small.t}.
\end{acknowledgements}

\section{Regularity properties of the free energy}
\label{sec:regularity}

In this section we discuss three regularity properties of the enriched free energy \eqref{e.intro.SBM.en.FE}. In Section~\ref{subsec:regularity.Lipschitz}, we establish a Lipschitz continuity bound for the enriched free energy which implies in particular that the free energy restricted to the space of probability measures is Lipschitz continuous with respect to the Wasserstein distance. In Section \ref{subsec:regularity.differentiability}, we prove that a Lipschitz continuous function on the space of cumulative distribution functions on $[-1,1]$, viewed as a subset of $L^2[-1,1]$, is Gateaux differentiable on a dense subset of its domain. Finally, in Section \ref{subsec:regularity.convexity}, we leverage a convexity property of the free energy to prove that the sequence of derivatives of the free energy converges to the derivative of the limit free energy at any point of differentiability of said limit. We now state the three main results of this section. Since the ideas developed in these proofs will not reappear later, the reader may consider skipping these proofs on first reading.

The Lipschitz continuity result in Section \ref{subsec:regularity.Lipschitz} involves the Wasserstein distance. The \emph{Wasserstein distance} between two probability measures $\mu,\nu\in \Pr[-1,1]$ is defined by
\begin{align}\label{e.Wasserstein.def}
W(\mu,\nu)&:=\sup\bigg\{\Big\lvert \int_{-1}^1 h(x)\ud \mu(x)-\int_{-1}^1 h(x)\ud \nu(x)\Big\rvert \mid \norm{h}_{\text{Lip}}\leq 1\bigg\},
\end{align}
where $\norm{\cdot}_{\mathrm{Lip}}$ denotes the Lipschitz semi-norm
\begin{equation}
\norm{h}_{\mathrm{Lip}}:=\sup_{x\neq x'\in [-1,1]}\frac{\abs{h(x)-h(x')}}{\abs{x-x'}}
\end{equation}
on the space of Lipschitz continuous functions $h:[-1,1]\to \R$. The Kantorovich-Rubinstein theorem \cite[Theorem 4.15]{PanL} ensures that the Wasserstein distance admits the dual representation
\begin{equation}\label{e.Wasserstein.def.inf}
W(\mu,\nu)=\inf\big\{\E\abs{X-Y} \mid X\sim \mu \text{ and } Y\sim \nu\big\}.
\end{equation}
In the one-dimensional setting, as shown in \cite[Theorem 1.5.1 and Corollary 1.5.3]{panaretos2020invitation}, the Wasserstein distance admits an explicit representation in terms of the cumulative distribution function or the quantile function. Indeed, if $F_\mu:[-1,1]\to [0,1]$ and $F_\mu^{-1}: [0,1]\to [-1,1]$ denote the cumulative distribution function and the quantile transform of a probability measure $\mu\in \Pr[-1,1]$, 
\begin{equation}
F_\mu(x):=\mu[-1,x] \quad \text{and} \quad F_\mu^{-1}(u):=\inf\{x\in \R \mid F_\mu(x)\geq u\},
\end{equation}
then for all probability measures $\mu,\nu\in \Pr[-1,1]$,
\begin{equation}\label{e.Wasserstein.1D}
W(\mu,\nu)=\int_0^1 \big\lvert F_\mu^{-1}(u)-F_{\nu}^{-1}(u)\big\rvert \ud u=\int_{-1}^1 \big\lvert F_\mu(x)-F_\nu(x)\big\rvert \ud x.
\end{equation}
In this notation, the main Lipschitz continuity result for the enriched free energy reads as follows.

\begin{proposition}\label{p.regularity.Lipschitz}
There exists $C<+\infty$ such that for any $(t,\mu), (t,\mu')\in \Rp\times \M_+$,
\begin{equation}
\big\lvert \bar F_N(t,\mu)-\bar F_N(t',\mu')\big \rvert\leq C\big(\abs{t-t'}+\big\lvert \mu[-1,1]-\mu'[-1,1]\big\rvert + \mu[-1,1]\abs{F_{\bar \mu}-F_{\bar \mu'}}_{L^2}\big),
\end{equation}
where the probability measures $\bar \mu$, $\bar \mu' \in \Pr[-1,1]$ are such that 
\begin{equation}  
\label{e.def.barmu}
\mu = \mu[-1,1] \, \bar \mu \qquad \text { and } \qquad \mu' = \mu'[-1,1] \, \bar \mu'.
\end{equation}
\end{proposition}

In Sections \ref{subsec:regularity.differentiability}-\ref{subsec:regularity.convexity} and throughout the paper, the notion of Gateaux differentiability that we take may differ from the most classical one, so we proceed to define it precisely. As in \cite{chen2023free}, we fix a topological vector space $X$ and a subset $U$ of $X$. We denote by $X^*$ the continuous dual of $X$, and we write $\langle \cdot, \cdot \rangle$ for the duality pairing. We will not necessarily take $U$ to be an open set, so, given a point $q\in U$, the set
\begin{equation}\label{e.SBM.admissible.def}
\mathrm{Adm}(U,q):=\big\{x\in X\mid  \text{ there exists } r>0 \text{ such that for all } t\in [0,r] \text{ we have } q+tx\in U\big\}
\end{equation}
of admissible directions along which a small line segment starting at $q$ is contained in $U$ will play an important part in the definition. Indeed, we say that a function $h:U\to \R$ is \emph{Gateaux differentiable} at $q\in U$ if the following two conditions hold.
\begin{enumerate}[label = \textbf{GD\arabic*}]
\item For every $x\in \mathrm{Adm}(U,q)$, the limit
\begin{equation}
Dh(q;x):=\lim_{\epsilon \dec 0}\frac{h(q+\epsilon x)-h(q)}{\epsilon}    
\end{equation}
exists.\label{e.SBM.GD.1}
\item There is a unique $y\in X^*$ such that, for every $x\in \mathrm{Adm}(U,q)$, we have $Dh(q;x)=\langle y,x\rangle$.\label{e.SBM.GD.2}
\end{enumerate}
In this case, the dual vector $y$ is called the \emph{Gateaux derivative} of $h$ at $q$.

We will mainly be concerned with the case $X=\Rp\times \M_+$ endowed with the product of the Euclidean topology and the topology of weak convergence, which is induced by the Wasserstein distance \eqref{e.Wasserstein.def}. In this setting, given a function $f:\Rp\times \M_+\to \R$, a time $t\geq 0$, and measures $\mu\in \M_+$ and $\nu \in \mathrm{Adm}(\M_+,\mu)$, we denote by $D_\mu f(t,\mu;\nu)$ the Gateaux derivative of the function $f(t,\cdot)$ at the measure $\mu$ in the direction $\nu$,
\begin{equation}\label{e.SBM.GD.M+}
D_\mu f(t,\mu;\nu):=\lim_{\epsilon \to 0}\frac{f(t,\mu+\epsilon \nu)-f(t,\mu)}{\epsilon}.
\end{equation}
We say that the Gateaux derivative of $f(t,\cdot)$ admits a density at the measure $\mu \in \M_+$ if there exists a continuous function $x\mapsto D_\mu f(t,\mu,x)$ defined on the interval $[-1,1]$ such that for all $\nu \in \mathrm{Adm}(\M_+,\mu)$, 
\begin{equation}\label{e.SBM.GD.M+.density}
D_\mu f(t,\mu;\nu)=\int_{-1}^1 D_\mu f(t,\mu,x)\ud \nu(x).
\end{equation}
We often abuse notation and identify the density $D_\mu f(t,\mu,\cdot)$ with the Gateaux derivative $D_\mu f(t,\mu)$. Implicitly identifying the continuous dual of $\M_+$ with the space of continuous functions on $[-1,1]$, this definition of Gateaux differentiability coincides with that in \eqref{e.SBM.GD.1}-\eqref{e.SBM.GD.2} but differs slightly from that in \cite{dominguez2024mutual} where the density is only required to be measurable and bounded as opposed to continuous. However, all Gateaux derivative densities appearing in \cite{dominguez2024mutual} are of the finite-volume free energy and are continuous, so this difference is insignificant.

For technical reasons, it will be convenient for the dense subset of $\Rp\times \M_+$ on which we establish the Gateaux differentiability of the free energy to be composed of well-behaved measures. Recall the definition in \eqref{e.intro.Reg.M+} of the set $
\mathrm{Reg}(\M_+)$ of measures in $\M_+$ that admit a smooth and strictly positive density with respect to Lebesgue measure on $[-1,1]$. A key property of this set is that for any $\mu,\nu\in \mathrm{Reg}(\M_+)$ and $t \in \R$ with $\abs{t}$ small enough, we have $\mu+t \nu \in \M_+$. In particular, the set of admissible directions at $\mu \in \mathrm{Reg}(\M_+)$ contains $\mathrm{Reg}(\M_+)$. In this notation, the main Gateaux differentiability results for the enriched free energy proved in Sections \ref{subsec:regularity.differentiability} and \ref{subsec:regularity.convexity} read as follows.

\begin{proposition}\label{p.regularity.differentiability}
Suppose that the sequence of enriched free energies $(\bar F_N)_{N\geq 1}$ converges pointwise to some limit $f:\Rp \times \M_+\to \R$ along a subsequence $(N_k)_{k\geq 1}$. The subsequential limit $f$ is Gateaux differentiable jointly in its two variables on a subset of $\Rpp\times \mathrm{Reg}(\M_+)$ that is dense in $\Rp\times \M_+$.
\end{proposition}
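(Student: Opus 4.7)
The plan is to transfer the Lipschitz bound of Proposition~\ref{p.regularity.Lipschitz} into the framework of Lipschitz functions on a convex subset of a separable Hilbert space, and then to invoke a general differentiability principle there. First, each $\mu \in \M_+$ is determined by the pair $(m, F_\mu)$, where $m = \mu[-1,1]$ is the total mass and $F_\mu(x) = \mu[-1,x]$ is the cumulative distribution function viewed as an element of $L^2([-1,1])$. This identifies $\M_+$ with a convex subset of $\Rp \times L^2([-1,1])$. Since~\eqref{e.Wasserstein.1D} expresses the Wasserstein distance as an $L^2$-distance of CDFs, Proposition~\ref{p.regularity.Lipschitz} yields a local Lipschitz bound on $\bar F_N$ in the coordinates $(t, m, F_\mu)$ that is uniform in $N$. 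Pointwise subsequential limits preserve local Lipschitz continuity, so $f$ is locally Lipschitz on the feasible region of $\R \times \R \times L^2([-1,1])$.

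At the heart of the argument is the general lemma that any locally Lipschitz function $\phi$ on a convex subset $U$ of a separable Hilbert space $H$ is Gateaux differentiable at a dense set of points of $U$. I would prove this by fixing a countable family $(h_n)_{n \geq 1}$ of directions dense in $H$. For each $n$, the one-dimensional function $s \mapsto \phi(u + s h_n)$ is Lipschitz and hence differentiable almost everywhere by Rademacher's theorem in one dimension. A Fubini argument inside finite-dimensional slices, together with the Lipschitz estimate $\abs{D\phi(u;h) - D\phi(u;h')} \leq \mathrm{Lip}(\phi)\, \norm{h - h'}_H$, then produces a dense set of base points $u$ at which the directional derivatives $D\phi(u;h_n)$ exist for every $n$, agree with $-D\phi(u;-h_n)$, and extend by continuity to a bounded linear functional on $H$; this functional is the Gateaux derivative of $\phi$ at $u$. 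Applied to $f$ viewed jointly in $(t,\mu)$ on the product Hilbert space $\R \times (\R \times L^2([-1,1]))$, the lemma yields joint Gateaux differentiability of $f$ on a dense subset of $\Rp \times \M_+$.

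To land the differentiability points inside $\Rpp \times \mathrm{Reg}(\M_+)$ rather than merely in $\Rp \times \M_+$, I would use that $\mathrm{Reg}(\M_+)$ is dense in $\M_+$ in the topology induced by the Lipschitz bound, via approximations of the form $\mu \mapsto \mu * \rho_\delta + \delta \cdot \mathrm{Leb}$ for a smooth mollifier $\rho_\delta$, and that for every $\mu \in \mathrm{Reg}(\M_+)$ the admissible directions $\mathrm{Adm}(\M_+, \mu)$ contain the linear span of $\mathrm{Reg}(\M_+)$. A short perturbation argument then shows that the dense differentiability set can be chosen inside $\Rpp \times \mathrm{Reg}(\M_+)$. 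The main obstacle is the differentiability lemma itself: in infinite dimensions Rademacher's theorem fails, so the existence of directional derivatives in a dense set of directions does not automatically upgrade to a linear continuous Gateaux derivative. This forces the restriction to finite-dimensional slices before taking limits; an alternative route would be to invoke the Aronszajn or Christensen theory of Gateaux differentiability outside Haar-null sets in separable Banach spaces.
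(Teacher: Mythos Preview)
Your overall framework is the same as the paper's: pass to coordinates $(t,m,F_\mu)$ with $F_\mu$ viewed in $L^2$, carry over the Lipschitz bound of Proposition~\ref{p.regularity.Lipschitz}, and then invoke an infinite-dimensional Rademacher-type principle. But the proposal as written has two genuine gaps.

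First, the ``general lemma'' you sketch---one-dimensional Rademacher along a countable dense set of directions, glued together by Fubini on finite-dimensional slices---does not by itself produce a \emph{dense} set of Gateaux differentiability points in infinite dimensions, as you acknowledge. The paper commits to exactly the alternative you mention at the end: it invokes Phelps' theorem (stated as Lemma~\ref{l.SBM.inf.dim.Rademacher}) that a Lipschitz function on a separable Banach space is Gateaux differentiable outside a Gaussian null set. To apply this, the paper first extends $f$ from the closed convex set $\D[0,1]$ to all of $L^2$ by composing with the metric projection onto $\D[0,1]$; it then checks separately that the restriction of the ambient Gateaux derivative to the admissible directions $\mathrm{Adm}(\D[0,1],q)$ determines it uniquely, by showing that $\D[0,1]-q$ has dense linear span in $L^2$. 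Your sketch does not address either of these steps, and without them ``Gateaux differentiable on a convex subset'' (in the sense of \ref{e.SBM.GD.1}--\ref{e.SBM.GD.2}, which demands a unique dual element) is not automatic.

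Second, your ``short perturbation argument'' for landing the differentiability set inside $\Rpp\times\mathrm{Reg}(\M_+)$ is where the real work is. Density of $\mathrm{Reg}(\M_+)$ in $\M_+$ is not enough: one needs that, after removing the exceptional Gaussian null set $\mathcal{N}$, the intersection $\mathcal{R}[0,1]\setminus\mathcal{N}$ remains dense. The paper obtains this from a structural fact about Gaussian null sets (Lemma~\ref{l.SBM.not.Gaussian.crit}): for any sequence $(w_n)$ with dense span and $|w_n|\to 0$, the closed convex hull of $\{0\}\cup\{w_n\}$, translated by any point, is not Gaussian null. Choosing the $w_n$ inside $\mathcal{R}[0,1]$ forces every neighbourhood of every $q\in\mathcal{R}[0,1]$ to meet $\mathcal{R}[0,1]\setminus\mathcal{N}$. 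A mollification-plus-density argument alone will not deliver this. Finally, the paper explicitly checks (Lemma~\ref{l.regularity.GD.h.tilde}) that Gateaux differentiability in $(t,s,q)$-coordinates transfers back to Gateaux differentiability of $f$ on $\M_+$ in the sense of~\eqref{e.SBM.GD.M+.density}, which your proposal leaves implicit.
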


\begin{proposition}\label{p.regularity.cont.of.derivative}
Suppose that the sequence of enriched free energies $(\bar F_N)_{N\geq 1}$ converges pointwise to some limit $f:\Rp\times \M_+\to \R$ along a subsequence $(N_k)_{k\geq 1}$. For each $t\geq 0$, if $f(t,\cdot)$ is Gateaux differentiable at $\mu \in \mathrm{Reg}(\M_+)$, then $D_\mu \bar F_{N_k}(t,\mu,\cdot)$ converges weakly to $D_\mu f(t,\mu, \cdot)$. For each $\mu \in \M_+$, if $f(\cdot,\mu)$ is differentiable at $t > 0$, then $\partial_t \bar F_{N_k}(t,\mu)$ converges to $\partial_t f(t,\mu)$. 
\end{proposition}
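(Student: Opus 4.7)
The plan is to reduce this infinite-dimensional statement to a standard one-dimensional convex-analysis argument by taking directional slices. Fix $t\ge 0$ and $\mu \in \mathrm{Reg}(\M_+)$ at which $f(t,\cdot)$ is Gateaux differentiable. For any $\nu \in \mathrm{Reg}(\M_+)$, the smoothness and strict positivity of the densities of $\mu$ and $\nu$ provide an open interval $I \ni 0$ on which $\mu + \epsilon \nu$ remains in $\M_+$; in particular $\mathrm{Reg}(\M_+) \subset \mathrm{Adm}(\M_+, \mu)$. I would then study the one-dimensional slice functions
\begin{equation}
h_N(\epsilon) := \bar F_N(t, \mu + \epsilon \nu), \qquad h(\epsilon) := f(t, \mu + \epsilon \nu), \qquad \epsilon \in I.
\end{equation}

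The key ingredient is a convexity property of $\mu \mapsto \bar F_N(t, \mu)$ along admissible directions, which renders each $h_N$ convex on $I$. I expect to derive this either by a Poisson-superposition argument---decomposing a Poisson point process of intensity $N(\mu_0 + \mu_1)$ as the independent superposition of Poisson processes of intensities $N\mu_0$ and $N\mu_1$, and then applying Jensen's inequality---or by using the explicit first-derivative formulas recalled from \cite{dominguez2024mutual} at the beginning of Section~\ref{sec:regularity} to compute the second derivative of $h_N$ and verify its sign directly. Granting this, $h$ is convex on $I$ as the pointwise limit of the convex functions $h_{N_k}$, and the assumption that $f(t,\cdot)$ is Gateaux differentiable at $\mu$ gives $h'(0) = \int_{-1}^1 D_\mu f(t,\mu,x) \ud \nu(x)$, while the first-derivative formulas yield $h_N'(0) = \int_{-1}^1 D_\mu \bar F_N(t,\mu,x) \ud \nu(x)$. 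The classical fact that pointwise convergence of convex functions to a function differentiable at a given point forces convergence of the one-sided derivatives there then produces
\begin{equation}
\int_{-1}^1 D_\mu \bar F_{N_k}(t,\mu,x) \ud \nu(x) \longrightarrow \int_{-1}^1 D_\mu f(t,\mu,x) \ud \nu(x) \qquad \text{as } k \to \infty,
\end{equation}
for every $\nu \in \mathrm{Reg}(\M_+)$.

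To upgrade this to weak convergence against arbitrary continuous test functions on $[-1,1]$, I would combine the above with a uniform sup-norm bound on the densities $D_\mu \bar F_N(t,\mu,\cdot)$ provided by the Lipschitz estimate of Proposition~\ref{p.regularity.Lipschitz} together with the explicit formula for the density. Any smooth function on $[-1,1]$ is a difference of two smooth strictly positive functions, so the linear span of densities of measures in $\mathrm{Reg}(\M_+)$ is dense in $C[-1,1]$ for the supremum norm; combined with the uniform bound, this extends the tested convergence from $\mathrm{Reg}(\M_+)$ to all continuous test functions. The principal technical obstacle I anticipate lies in pinning down the convexity of $h_N$ with enough precision and verifying that the one-sided derivatives $h_N'(0^\pm)$ genuinely coincide with $\int_{-1}^1 D_\mu \bar F_N(t,\mu,x) \ud \nu(x)$, which in turn reduces to checking Gateaux differentiability of $\bar F_N(t,\cdot)$ at $\mu$; this is expected from the smoothness of the log-partition function in the Poisson intensity but needs to be written out carefully.
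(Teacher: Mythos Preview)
Your strategy is the same as the paper's: establish convexity of the one-variable slice $r \mapsto \bar F_N(t,\mu+r\nu)$ and then invoke the standard fact that derivatives of pointwise-convergent convex functions converge at points of differentiability of the limit. The paper carries out the convex-analysis step by hand via subsequential limits and the two-sided inequality obtained by flipping the sign of $\eta$ (which is where $\mu,\nu\in\mathrm{Reg}(\M_+)$ is used), but this is equivalent to your appeal to the classical one-dimensional lemma.

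The one place where your proposal is underspecified is precisely the point you flag: the convexity itself. The paper does not obtain it by a Poisson superposition plus Jensen argument, and I do not see how such an argument would give the correct direction of inequality---the free energy is the expectation of a log-partition function, so the natural Jensen move produces concavity rather than convexity in the Hamiltonian, and the $r$-dependence enters through a Poisson intensity rather than as a natural exponential-family parameter. The paper instead proves convexity (Proposition~\ref{p.regularity.en.FE.convex}) by rewriting $\bar F_N(t,\mu+r\nu)$ as a mutual information plus an affine function of $r$, and then showing that the mutual information is concave in $r$ by computing its second derivative as a sum of conditional-mutual-information second differences and applying the chain rule to check each one is non-positive. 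Your alternative suggestion of computing $h_N''$ directly is exactly this computation in disguise; organizing it through mutual information is what makes the signs transparent. So your outline is correct, but you should be aware that the convexity step requires the information-theoretic argument from \cite{kireeva2023breakdown} rather than a soft Jensen-type bound.
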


In proving Propositions \ref{p.regularity.Lipschitz} and \ref{p.regularity.differentiability}, it will be convenient to remember from \cite[Lemmas 2.1 and 2.3]{dominguez2024mutual} that the enriched free energy \eqref{e.intro.SBM.en.FE} is Gateaux differentiable at every $(t,\mu)\in \Rp\times \M_+$ with
\begin{align}
\partial_t\bar F_N(t,\mu)&=\frac{1}{2}\E \big(c+\Delta \langle \sigma_1\sigma_2\rangle\big)\log \big( c+\Delta \langle \sigma_1\sigma_2\rangle\big)-\frac{\Delta \m^2}{2}-\frac{c}{2}+\BigO(N^{-1}), \label{e.SBM.en.FE.der.t}\\
D_\mu \bar F_N(t,\mu,x)&=\E\big(c+\Delta \langle \sigma_1\rangle x\big)\log \big(c+\Delta \langle\sigma_1\rangle x\big)-c-\Delta \m x+\BigO(N^{-1}).\label{e.SBM.en.FE.der.mu}
\end{align}
We have used $\langle \cdot \rangle$ to denote the Gibbs average associated with the enriched Hamiltonian \eqref{e.intro.SBM.en.H}. This means that for any bounded and measurable function $f=f(\sigma^1,\ldots,\sigma^n)$ of finitely many replicas, 
\begin{equation}\label{e.SBM.Gibbs}
 \langle f(\sigma^1, \ldots, \sigma^n) \rangle := \langle f\rangle   :=\frac{\int_{\Sigma_N^n}f(\sigma^1,\ldots,\sigma^n)\prod_{\ell\leq n}\exp H_N^{t,\mu}(\sigma^\ell)\ud P_N^*(\sigma^\ell)}{\big(\int_{\Sigma_N} \exp H_N^{t,\mu}(\sigma)\ud P_N^*(\sigma)\big)^n}.
\end{equation}
The computations leading to \eqref{e.SBM.en.FE.der.t}-\eqref{e.SBM.en.FE.der.mu} are considerably simplified by the \emph{Nishimori identity}. This identity allows us to freely interchange one replica $\sigma^\ell$ by the signal $\sigma^*$ when taking an average with respect to all sources of randomness, thus avoiding a cascade of new replicas as we differentiate the free energy. More precisely, it states that, for every bounded and measurable function $f=f(\sigma^1,\ldots,\sigma^n,\D_N^{t,\mu})$ of finitely many replicas and the data, 
\begin{equation}\label{e.SBM.Nishimori}
\E\big\langle f\big(\sigma^1,\sigma^2,\ldots,\sigma^n, \D_N^{t,\mu}\big)\big\rangle=\E\big\langle f\big(\sigma^*,\sigma^2,\ldots,\sigma^n, \D_N^{t,\mu}\big)\big\rangle.
\end{equation}
This can be first verified for functions of product form using that the Gibbs average is the conditional law of the signal $\sigma^*$ given the data $\D_N^{t,\mu}$, and then extended to all bounded and measurable functions by a monotone class argument as in \cite[Proposition 4.1]{TD_JC_book}.

The proof of Proposition \ref{p.regularity.cont.of.derivative} will rely on a convexity property of the free energy that will be obtained through information-theoretic arguments taken from \cite{kireeva2023breakdown}.

\subsection{Uniform Lipschitz continuity of the free energy}
\label{subsec:regularity.Lipschitz}

The Lipschitz continuity of the free energy stated in Proposition \ref{p.regularity.Lipschitz} will be deduced from the Lipschitz continuity of an extension of the free energy. Given a probability measure $\mu \in \Pr[-1,1]$, consider a sequence $x=(x_{i,k})_{i,k\geq 1}$ of i.i.d.\@ random variables with law $\mu$. For each $s>0$ and $i\geq 1$, let $\Pi_{i,s}\sim \Poi(sN)$ be independent over $i\geq 1$, and introduce the Hamiltonian on $\Sigma_N$ defined by
\begin{equation}\label{e.SBM.H.s}
\widetilde{H}_N^{s,\mu}(\sigma):=\sum_{i\leq N}\sum_{k\leq \Pi_{i,s}}\log\bigg[\big(c+\Delta \sigma_ix_{i,k}\big)^{\tG_{i,k}^x}\Big(1-\frac{c+\Delta \sigma_ix_{i,k}}{N}\Big)^{1-\tG_{i,k}^x}\bigg],
\end{equation}
where the random variables $(\tG_{i,k}^x)_{i,k\geq 1}$ are independent with conditional distribution
\begin{equation}\label{eqn: SBM tG distribution}
\P\big\{\tG_{i,k}^x=1 \mid \sigma^*,x\big\}:=\frac{c+\Delta \sigma_i^*x_{i,k}}{N}.
\end{equation}
Recalling the definition of the time-dependent Hamiltonian in \eqref{e.intro.SBM.H.t}, we define the extended free energy functional
\begin{equation}\label{e.SBM.en.FE.s}
\widetilde{F}_N(t,s,\mu):=\frac{1}{N}\E\log \int_{\Sigma_N}\exp\big(H_N^t(\sigma)+\widetilde{H}_N^{s,\mu}(\sigma)\big)\ud P_N^*(\sigma).
\end{equation}
For any $\mu \in \M_+$, we have $\bar F_N(t,\mu)=\widetilde{F}_N\big(t,\mu[-1,1],\bar \mu\big)$, where $\bar \mu$ is the probability measure obtained by normalizing $\mu$ as in \eqref{e.def.barmu}. Indeed, this is how the enriched free energy \eqref{e.intro.SBM.en.FE} was defined in \cite{dominguez2024mutual}. The Lipschitz continuity of the extended free energy \eqref{e.SBM.en.FE.s} will be obtained by combining the mean value theorem with the derivative expressions \eqref{e.SBM.en.FE.der.t}-\eqref{e.SBM.en.FE.der.mu}. It will also be convenient to remember from \cite[Lemma 2.3]{dominguez2024mutual} that
\begin{equation}\label{e.SBM.extended.FE.der.s}
\partial_s\widetilde{F}_N(t,s,\mu)=\E\big(c+\Delta \langle \sigma_1\rangle x_1\big)\log \big(c+\Delta \langle\sigma_1\rangle x_1\big)-c-\Delta \m \E x_1+\BigO(N^{-1}),
\end{equation}
where the random variable $x_1$ has law $\mu$.

\begin{lemma}\label{l.SBM.extended.FE.Lip}
There exists $C<+\infty$ such that for any $(t,s,\mu), (t',s',\nu)\in \Rp\times \Rp\times \Pr[-1,1]$,
\begin{equation}
\lvert \widetilde F_N(t,s,\mu)-\widetilde F_N(t',s',\nu)\rvert \leq C\big(\abs{t-t'}+\abs{s-s'}+s'W( \mu,  \nu)\big).
\end{equation}
\end{lemma}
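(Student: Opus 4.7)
The plan is to apply the triangle inequality to reduce the bound to three separate comparisons---one varying only $t$, one varying only $s$, and one varying only $\mu$---and to control each piece by the mean value theorem together with the derivative identities \eqref{e.SBM.en.FE.der.t}, \eqref{e.SBM.extended.FE.der.s}, and \eqref{e.SBM.en.FE.der.mu}. The crucial structural input throughout is the assumption $|\Delta| < c$, which guarantees that $c+\Delta u$ lies in the compact subset $[c-|\Delta|,c+|\Delta|]$ of $(0,\infty)$ for every $u\in[-1,1]$, so that both $\log(c+\Delta u)$ and $(c+\Delta u)\log(c+\Delta u)$ are uniformly bounded.

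For the $t$-variation I would use \eqref{e.SBM.en.FE.der.t}: since $\langle \sigma_1\sigma_2\rangle\in[-1,1]$ under the Gibbs measure \eqref{e.SBM.Gibbs}, the derivative $\partial_t \widetilde F_N$ is $\BigO(1)$ independently of $(s,\mu,N)$, giving the desired $C|t-t'|$ bound after the mean value theorem. The $s$-variation is handled identically via \eqref{e.SBM.extended.FE.der.s}, with the same observation that $c+\Delta \langle\sigma_1\rangle x \in [c-|\Delta|, c+|\Delta|]$ bounds the integrand uniformly.

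The $\mu$-variation is where the main work lies. The starting observation is that $\widetilde F_N(t',s',\mu)=\bar F_N(t', s'\mu)$, so it suffices to compare $\bar F_N$ at two elements of $\M_+$ of common mass $s'$. To exploit \eqref{e.SBM.en.FE.der.mu}, I would first check that the Gateaux derivative density $x\mapsto D_\mu \bar F_N(t',\rho,x)$ has a Lipschitz norm in $x$ that is uniformly bounded in $(t',\rho,N)$: differentiating in $x$ yields $\Delta \E\langle \sigma_1\rangle[\log(c+\Delta\langle\sigma_1\rangle x)+1]-\Delta \m + \BigO(N^{-1})$, again bounded thanks to $|\Delta|<c$. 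Setting $\rho_u := s'((1-u)\mu + u\nu)\in \M_+$ for $u\in[0,1]$, the plan is to establish the increment estimate $|\bar F_N(t',\rho_{u+h})-\bar F_N(t',\rho_u)|\leq hs'L\, W(\mu,\nu)+o(h)$ for some uniform $L$, and then sum it over a partition of $[0,1]$ and send the mesh to zero to obtain the Lipschitz bound $Cs'W(\mu,\nu)$ via the Kantorovich--Rubinstein duality \eqref{e.Wasserstein.def}.

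The hard part will be establishing this increment bound cleanly, since the signed measure $\nu-\mu$ is not itself an admissible direction at $\rho_u$ for $\bar F_N$, and \eqref{e.SBM.en.FE.der.mu} formally only applies along admissible directions. I would work around this by invoking the Jordan decomposition $s'(\nu-\mu)=\eta^+-\eta^-$ and writing $\rho_{u+h}=(\rho_u-h\eta^-)+h\eta^+$, which for $h$ small keeps both summands inside $\M_+$ because $\eta^-\leq s'\mu\leq \rho_u/(1-u)$. Each of the two ensuing comparisons now adds a positive measure to an element of $\M_+$---an admissible direction---so \eqref{e.SBM.en.FE.der.mu} applies to each piece and, after recombining and using the continuity of $\rho\mapsto D_\mu\bar F_N(t',\rho,\cdot)$ in $\rho$, yields the representation $\bar F_N(t',\rho_{u+h})-\bar F_N(t',\rho_u)=hs'\int D_\mu\bar F_N(t',\rho_u,x)\ud(\nu-\mu)(x)+o(h)$. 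The Kantorovich--Rubinstein inequality applied with Lipschitz constant $L$ then closes the argument.
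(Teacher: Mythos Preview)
Your proposal is correct and follows essentially the same route as the paper: bound the $t$- and $s$-variations via \eqref{e.SBM.en.FE.der.t} and \eqref{e.SBM.extended.FE.der.s}, then interpolate linearly between $s'\mu$ and $s'\nu$ and use the uniform Lipschitz bound on $x\mapsto D_\mu\bar F_N(t,\rho,x)$ together with Kantorovich--Rubinstein duality.

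The one unnecessary detour is the Jordan decomposition. Your worry that ``$\nu-\mu$ is not itself an admissible direction at $\rho_u$'' is unfounded: for $u\in[0,1)$ one has $\rho_u+t(\nu-\mu)=(s'(1-u)-t)\mu+(s'u+t)\nu$, which remains a non-negative measure whenever $0\le t\le s'(1-u)$, so $\nu-\mu\in\mathrm{Adm}(\M_+,\rho_u)$. The density representation \eqref{e.SBM.GD.M+.density} is stated for all admissible directions, so the paper simply writes the fundamental theorem of calculus as
\[
\bar F_N(t,s'\mu)-\bar F_N(t,s'\nu)=s'\int_0^1 \int_{-1}^1 D_\mu\bar F_N\big(t,\rho_u,x\big)\,\ud(\mu-\nu)(x)\,\ud u
\]
and applies \eqref{e.Wasserstein.def} directly, without splitting into positive and negative parts. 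Your Jordan-decomposition argument recovers the same conclusion but with an extra layer.
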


\begin{proof}
The derivative expressions \eqref{e.SBM.en.FE.der.t} and \eqref{e.SBM.extended.FE.der.s} reveal that $\partial_t\widetilde{F}_N(t,s,\mu)$ and $\partial_s\widetilde{F}_N(t,s,\mu)$ are bounded by some constant independent of the triple $(t,s,\mu)$. By the mean value theorem and the triangle inequality it therefore suffices to show that there exists $C<+\infty$ such that for all $t,s\in\Rpp$ and $\mu,\nu\in \Pr[-1,1]$,
\begin{equation}\label{e.SBM.extended.FE.Lip.goal}
\lvert \widetilde F_N(t,s,\mu)-\widetilde F_N(t,s,\nu)\rvert \leq
 CsW( \mu,  \nu).
\end{equation}
The fundamental theorem of calculus and the definition of the Gateaux derivative imply that
\begin{equation*}
\widetilde F_N(t,s,\mu)-\widetilde F_N(t,s,\nu)=\bar F_N(t,s\mu)-\bar F_N(t,s\nu)=s\int_0^1 D_\mu\bar{F}_N\big(t,s\nu+us(\mu-\nu); \mu-\nu\big)\ud u.
\end{equation*}
For each $u\in [0,1]$, let $f_{t,s,u}(x):=D_\mu\bar{F}_N\big(t,s\nu+us(\mu-\nu); x\big)$ in such a way that 
\begin{equation}\label{e.SBM.extended.FE.Lip.key}
\big\lvert \widetilde F_N(t,s,\mu)-\widetilde F_N(t,s,\nu) \big\rvert \leq s\int_0^1 \Big\lvert \int_{-1}^1 f_{t,s,u}(x)\ud (\mu-\nu)(x) \Big\rvert\ud u.
\end{equation}
The mean value theorem and the bound
\begin{equation}\label{e.SBM.extended.FE.Lip.der.bound}
\big\lvert \partial_x D_\mu \bar{F}_N(t,\mu,x)\big\rvert\leq c\big(1+\abs{\log(2c)}+\abs{\log(c-\abs{\Delta})}\big)
\end{equation}
established for any $(t,\mu)\in \Rp\times \M_+$ in \cite[Lemma 3.3]{dominguez2024mutual} imply that 
\begin{equation*}
\norm{f_{t,s,u}}_{\text{Lip}}\leq c\big(1+\abs{\log(2c)}+\abs{\log(c-\abs{\Delta})}\big).    
\end{equation*}
Together with the definition of the Wasserstein distance in \eqref{e.Wasserstein.def} and the bound \eqref{e.SBM.extended.FE.Lip.key}, this establishes \eqref{e.SBM.extended.FE.Lip.goal} and completes the proof.
\end{proof}

\begin{proof}[Proof of Proposition \ref{p.regularity.Lipschitz}]
The Lipschitz bound in Lemma \ref{l.SBM.extended.FE.Lip}, the representation \eqref{e.Wasserstein.1D} for the one-dimensional Wasserstein distance, and the Cauchy-Schwarz inequality give a constant $C<+\infty$ such that, for all $(t,s,\mu), (t',s',\nu)\in \Rp\times \Rp\times \Pr[-1,1]$,
\begin{equation*}
\lvert \widetilde F_N(t,s,\mu)-\widetilde F_N(t',s',\nu)\rvert \leq C\big(\abs{t-t'}+\abs{s-s'}+s'\abs{F_\mu-F_\nu}_{L^2}\big).
\end{equation*}
Remembering that for any $\mu \in \M_+$, we have $\bar F_N(t,\mu)=\widetilde{F}_N\big(t,\mu[-1,1],\bar \mu\big)$ completes the proof. 
\end{proof}

\subsection{Almost everywhere differentiability of the limit free energy}
\label{subsec:regularity.differentiability}

The Gateaux differentiability of the free energy stated in Proposition \ref{p.regularity.differentiability} will be deduced from the Gateaux differentiability of the free energy thought of as a function on the space of paths
\begin{align}
\D[0,1]:=\big\{q:[-1,1]\to [0,1] \mid q& \text{ is right-continuous}\notag\\
&\qquad\text{and non-decreasing with } q(1)=1\big\} \subset L^2[-1,1].
\end{align}
This space of paths is in one-to-one correspondence with the set of cumulative distribution functions via the mapping $\mathcal{F}:\Pr[-1,1]\to \D[0,1]$ defined by $\mathcal{F}(\mu):=F_\mu$. Through this mapping, any function $\smash{h:\Rp \times \M_+\to \R}$ may be identified with the function $\smash{\widetilde{h}':\Rp\times \Rp\times \D[0,1]\to \R}$ defined by
\begin{equation}\label{e.SBM.tilde.h.from.h}
\widetilde{h}'(t,s,q):=h\big(t,s\mathcal{F}^{-1}(q)\big).
\end{equation}
In particular, the free energy \eqref{e.intro.SBM.en.FE} may be identified with the function $\widetilde{F}_N':\Rp\times \Rp\times \D[0,1]\to \R$ defined by
\begin{equation}\label{e.SBM.en.FE.s.on.D01}
\widetilde{F}_N'(t,s,q):=\bar{F}_N\big(t,s\mathcal{F}^{-1}(q)\big).
\end{equation}
Notice that the notions of Gateaux differentiability for a function $\smash{h:\Rp\times \M_+\to \R}$ and its corresponding function $\smash{\widetilde{h}':\Rp\times \Rp \times \D[0,1]\to \R}$ differ. On the one hand, for each fixed $t\geq 0$, the Gateaux derivative of the function $\smash{h(t,\cdot)}$ at the measure $\mu \in \M_+$ may be identified with the continuous density $\smash{D_\mu h(t,\mu,\cdot)}$ defined according to \eqref{e.SBM.GD.M+.density}. On the other, for each fixed $t,s\geq 0$, the Gateaux derivative of the function $\smash{\widetilde{h}'(t,s,\cdot)}$ at the path $q\in \D[0,1]$ may be identified with the square-integrable function $\smash{\partial_q\widetilde{h}'(t,s,q,\cdot)\in L^2[-1,1]}$ having the property that for all $q'\in \mathrm{Adm}(\D[0,1],q)$,
\begin{equation}\label{e.SBM.GD.L2.density}
\widetilde{h}'(t,s,q+\epsilon q')=\widetilde{h}'(t,s,q)+\epsilon \int_{-1}^1 \partial_q\widetilde{h}'(t,s,q,u)q'(u)\ud u+o(\epsilon).
\end{equation}
However, it will be important to observe that whenever $\smash{\widetilde{h}'}$ is Gateaux differentiable at a point $\smash{(t,s,q)\in \Rp\times \Rp\times \D[0,1]}$, then $\smash{h}$ must be Gateaux differentiable at the corresponding point $\smash{(t,s\mathcal{F}^{-1}(q))\in \Rp\times \M_+}$.

\begin{lemma}\label{l.regularity.GD.h.tilde}
If the function $\smash{\widetilde{h}':\Rp\times \Rp\times \D[0,1]\to \R}$ is Gateaux differentiable jointly in its three variables at the point $\smash{(t,s,q)\in \Rpp\times \Rpp\times \D[0,1]}$, then the function $\smash{h:\Rp\times \M_+\to \R}$ used to define it according to \eqref{e.SBM.tilde.h.from.h} is Gateaux differentiable jointly in its two variables at the point $\smash{(t,s\mathcal{F}^{-1}(q))\in \Rpp\times \M_+}.$
\end{lemma}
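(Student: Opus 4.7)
The plan is to view $\Psi:\M_+\to\Rp\times\D[0,1]$ defined by $\Psi(\mu):=(\mu[-1,1],F_{\bar\mu})$ as a reparametrization so that $h(t,\mu)=\widetilde{h}'(t,\Psi(\mu))$, and to transfer the assumed Gateaux differentiability of $\widetilde{h}'$ at $(t,s,q)$ to $h$ at $(t,s\mathcal{F}^{-1}(q))$. Set $\mu:=s\mathcal{F}^{-1}(q)$ and fix an admissible direction $(\tau,\nu)\in \mathrm{Adm}(\Rp\times\M_+,(t,\mu))$. Writing $\sigma:=\nu[-1,1]$ and $F_\nu(x):=\nu[-1,x]$, a direct computation gives, for small $\epsilon>0$,
\begin{equation}
q_\epsilon:=F_{\overline{\mu+\epsilon\nu}}=\frac{sq+\epsilon F_\nu}{s+\epsilon\sigma}=q+\epsilon'\tilde{q},\qquad \epsilon':=\frac{\epsilon s}{s+\epsilon\sigma},
\end{equation}
where $\tilde{q}:=s^{-1}(F_\nu-\sigma q)$. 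Crucially, the path $\epsilon\mapsto q_\epsilon$ traces the half-line $q+\Rp\tilde{q}$, but at a warped speed with $\epsilon'/\epsilon\to1$ and $\epsilon-\epsilon'=O(\epsilon^2)$. The admissibility of $\nu$ at $\mu$ forces $\tilde{q}\in\mathrm{Adm}(\D[0,1],q)$: one has $\tilde{q}(1)=0$, and $q+r\tilde{q}=q_{\epsilon(r)}$ is a valid CDF for all sufficiently small $r\ge 0$ via the inverse reparametrization $\epsilon(r)=rs/(s-r\sigma)$.

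Next, by the joint Gateaux differentiability of $\widetilde{h}'$ at $(t,s,q)$, the straight-line expansion along the direction $(\tau,\sigma,\tilde{q})$ reads
\begin{equation}
\widetilde{h}'(t+\epsilon\tau,s+\epsilon\sigma,q+\epsilon\tilde{q})=\widetilde{h}'(t,s,q)+\epsilon L(\tau,\sigma,\tilde{q})+o(\epsilon),
\end{equation}
where $L$ denotes the Gateaux derivative. The remaining task is to compare this to $\widetilde{h}'(t+\epsilon\tau,s+\epsilon\sigma,q_\epsilon)$, whose third coordinate differs by $(\epsilon'-\epsilon)\tilde{q}$, of size $O(\epsilon^2)$ in $L^2$. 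This is the main obstacle, since Gateaux differentiability at a single point does not control perturbations off the straight line; I would absorb the error by invoking the uniform Lipschitz continuity of $\widetilde{h}'$ in its $L^2$-variable granted by Proposition \ref{p.regularity.Lipschitz}, producing a correction of size $O(\epsilon^2)=o(\epsilon)$. This verifies condition \ref{e.SBM.GD.1} for $h$ at $(t,\mu)$, with the derivative equal to $L(\tau,\sigma,\tilde{q})$, which is linear in $(\tau,\nu)$.

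To check \ref{e.SBM.GD.2}, I would write the continuous linear functional $L$ as $L(\tau,\sigma,\tilde{q})=\tau a+\sigma b+\int_{-1}^1 c(u)\tilde{q}(u)\ud u$ for some $a,b\in\R$ and $c\in L^2[-1,1]$. Substituting the explicit form of $\tilde{q}$ and applying Fubini recasts this as
\begin{equation}
L(\tau,\sigma,\tilde{q})=\tau a+\int_{-1}^1\Bigl[b+\frac{1}{s}\int_y^1 c(u)\ud u-\frac{1}{s}\int_{-1}^1 c(u)q(u)\ud u\Bigr]\ud\nu(y),
\end{equation}
which is precisely a duality pairing against $(\tau,\nu)$. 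The bracketed expression depends only on $(t,\mu)$ and is continuous in $y$ as a primitive of an $L^2$ function plus a constant, so it furnishes the candidate Gateaux derivative density $D_\mu h(t,\mu,\cdot)\in C[-1,1]$. Uniqueness follows by testing against Dirac masses $\delta_y\in\mathrm{Adm}(\M_+,\mu)$ for $y\in[-1,1]$, which pin down the continuous density pointwise. This completes the verification of Gateaux differentiability of $h$ jointly in its two variables at $(t,\mu)$.
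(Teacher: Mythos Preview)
Your proof takes essentially the same route as the paper's: both compute $F_{\overline{\mu+\epsilon\nu}}$ explicitly, recognize that the path $(s+\epsilon\sigma,\,q_\epsilon)$ deviates from a straight line only at second order in $\epsilon$, expand using Gateaux differentiability of $\widetilde{h}'$, and recover the same density formula via Fubini. Your reparametrization $q_\epsilon=q+\epsilon'\tilde q$ and the paper's $q_\epsilon=q+\epsilon\cdot\frac{s'F_{\bar\nu-\bar\mu}}{s+\epsilon s'}$ are the same identity, and your bracketed density coincides with the paper's $D_\mu(w)$. You are more explicit than the paper on two points: you flag the curved-path issue (the third-coordinate direction depends on $\epsilon$), which the paper passes over silently, and you verify the uniqueness condition \ref{e.SBM.GD.2} by testing against Dirac masses, which the paper leaves implicit.

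One caveat: your appeal to Proposition~\ref{p.regularity.Lipschitz} to absorb the $O(\epsilon^2)$ discrepancy is specific to the free energy, not to an arbitrary $h$ as in the lemma's hypothesis. The paper's proof shares the same implicit reliance---bare Gateaux differentiability at a point does not control off-line perturbations---so as a proof of the lemma in its stated generality both arguments carry this small gap. For the sole application in Proposition~\ref{p.regularity.differentiability}, the limit free energy inherits the Lipschitz bound from Proposition~\ref{p.regularity.Lipschitz}, so your fix is valid there and the argument goes through.
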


\begin{proof}
For simplicity of notation, we will drop the time variable and prove that whenever the function $\widetilde{h}':\Rp\times \D[0,1]\to \R$ is Gateaux differentiable jointly in its two variables at the point $(s,q)\in \Rpp\times \D[0,1]$, then the function $h:\M_+\to \R$ used to define it according to \eqref{e.SBM.tilde.h.from.h} without the time variable is Gateaux differentiable at the point $s\mathcal{F}^{-1}(q)$. Generalizing the proof to incorporate the time variable requires only minor changes in notation. Let $\mu:=s\mathcal{F}^{-1}(q)$, and fix $\nu \in \mathrm{Adm}(\M_+,\mu)$ as well as $\epsilon>0$ small enough so that $\mu+\epsilon \nu \in \M_+$. We define $s':=\nu[-1,1]$, and observe that by Gateaux differentiability of $\widetilde{h}'$ at $(s,q)\in \Rpp\times \D[0,1]$,
\begin{align*}
h(\mu+\epsilon \nu)-h(\mu)&=\widetilde{h}'\bigg(s+\epsilon s', q+\epsilon \frac{s'F_{\bar \nu-\bar \mu}}{s+\epsilon s'}\bigg) - \widetilde{h}'(s,q)\\
&=\epsilon \bigg(s'\partial_s \widetilde{h}'(s,q)+\frac{s'}{s}\int_{-1}^1 \partial_q\widetilde{h}'(s,q,u) F_{\bar \nu - \bar \mu}(u)\ud u\bigg)+o(\epsilon).
\end{align*}
The Fubini-Tonelli theorem implies that the second term between parentheses is given by
\begin{align*}
\frac{1}{s}\int_{-1}^1 \int_{-1}^1 \partial_q\widetilde{h}'(s,q,u)\1_{[-1,u]}&(w)\ud \nu(w)\ud u-\frac{1}{s}\int_{-1}^1 \int_{-1}^1 \partial_q\widetilde{h}'(s,q,u) F_{\bar \mu}(u)\ud u \ud \nu(w)\\
&=\frac{1}{s}\int_{-1}^1 \bigg(\int_w^1 \partial_q\widetilde{h}'(s,q,u) \ud u -\int_{-1}^1 \partial_q\widetilde{h}'(s,q,u) F_{\bar \mu}(u)\ud u\bigg)\ud \nu(w).
\end{align*}
It follows that
\begin{equation*}
h(\mu+\epsilon \nu)-h(\mu)=\epsilon \int_{-1}^1 D_{\mu}(w)\ud \nu(w)+o(\epsilon)
\end{equation*}
for the continuous function $D_{\mu}:[-1,1]\to \R$ defined by
\begin{equation*}
D_{\mu}(w):=\partial_s\widetilde{h}'(s,q)+\frac{1}{s}\bigg(\int_w^1 \partial_q\widetilde{h}'(s,q,u) \ud u -\int_{-1}^1 \partial_q\widetilde{h}'(s,q,u) F_{\bar \mu}(u)\ud u\bigg).
\end{equation*}
We have used that $\partial_q\widetilde{h}'(s,q,\cdot)\in L^2[-1,1]$ to obtain the continuity of $D_{\mu}$. This establishes the Gateaux differentiability of $h$ at $\mu\in  \M_+$ and completes the proof.
\end{proof}

To account for the fact that we would like to establish Gateaux differentiability on a subset of $\Rpp\times \mathrm{Reg}(\M_+)$ that is dense in $\Rp\times \M_+$, we also introduce the space of paths
\begin{align}
\mathcal{R}[0,1]:=\big\{q \in \D[0,1] \mid q& \text{ is smooth with strictly positive derivative} \big\}.
\end{align}
This space of paths is in one-to-one correspondence with the probability measures in $\mathrm{Reg}(\M_+)$. To establish the Gateaux differentiability of a function defined on an infinite-dimensional Banach space, we will follow the arguments in \cite{chen2023free} and rely on the notion of a Gaussian null set. A Borel subset $B$ of a separable Banach space $X$ is said to be a \emph{Gaussian null set} if for every non-degenerate Gaussian measure $\mu$ on $X$, we have $\mu(B)=0$. Recall that a probability measure $\mu$ on $X$ is said to be a \emph{non-degenerate Gaussian measure} if for every non-zero $y \in X^*$, the measure $\mu \circ y^{-1}$ is a Gaussian measure with non-zero variance. Gaussian null sets appear when invoking the following two lemmas.

\begin{lemma}\label{l.SBM.not.Gaussian.crit}
Let $(w_n)_{n\geq 1}$ be a sequence in a separable Banach space $X$ that has dense linear span and satisfies $\lim_{n\to +\infty}\abs{w_n}=0$, and let $K$ be the closed convex hull of $\{0\}\cup\{w_n\mid n\geq 1\}$. For every $x\in X$, the set $x+K$ is not a Gaussian null set.
\end{lemma}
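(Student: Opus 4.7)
The plan is to reduce the statement to the case $x = 0$ and then exhibit one non-degenerate Gaussian measure $\mu$ on $X$ with $\mu(K)>0$. Once such a $\mu$ is found, its translate $A\mapsto \mu(A-x)$ is again a non-degenerate Gaussian (non-degeneracy depends only on the covariance, not on the mean) and charges $x+K$ with the same positive mass. The heart of the argument is therefore to construct a non-degenerate Gaussian random element $Y$ of $X$ with $\P\{Y\in K\}>0$.

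The construction I would use is an explicit random series. Let $M:=\sup_{n\geq 1}\abs{w_n}$, which is finite since $\abs{w_n}\to 0$. Fix a small parameter $\epsilon\in(0,1)$ and set $c_n:=\epsilon\, 2^{-n}$ and $a_n:=\epsilon^2\, 4^{-n}$. Let $(\xi_n)_{n\geq 1}$ be i.i.d.\@ standard Gaussian variables and define
\begin{equation*}
x_0 := \sum_{n\geq 1} c_n w_n,
\qquad
Y := x_0 + \sum_{n\geq 1} a_n\xi_n w_n.
\end{equation*}
Both series converge absolutely almost surely because $\sum_n c_n\abs{w_n}\leq M\epsilon$ and $\E\sum_n a_n\abs{\xi_n}\abs{w_n}\leq M\sqrt{2/\pi}\sum_n a_n$ are finite. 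The law $\mu$ of $Y$ is therefore a Gaussian measure on $X$, and for any non-zero $y\in X^*$ the real Gaussian $y(Y)-y(x_0)=\sum_n a_n y(w_n)\xi_n$ has variance $\sum_n a_n^2 y(w_n)^2>0$: some $y(w_n)$ must be non-zero, since otherwise $y$ would vanish on the dense linear span of $(w_n)_{n\geq 1}$ and hence be zero on $X$. Thus $\mu$ is non-degenerate.

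To produce positive mass on $K$, I would introduce the events
\begin{equation*}
A_1 := \big\{a_n\xi_n \geq -c_n\text{ for every }n\geq 1\big\},
\qquad
A_2 := \Big\{\sum_{n\geq 1} a_n\xi_n \leq 1 - \sum_{n\geq 1} c_n\Big\}.
\end{equation*}
On $A_1\cap A_2$, the coefficients $\lambda_n:=c_n+a_n\xi_n$ are non-negative and satisfy $\sum_n \lambda_n\leq 1$, so each partial sum $\sum_{n\leq N}\lambda_n w_n$ is a convex combination of $\{0\}\cup\{w_n\mid n\geq 1\}$, and passing to the limit shows that $Y\in K$. By independence and the Gaussian tail bound, $\P(A_1)=\prod_n \P\{\xi_n\geq -2^n/\epsilon\}$ tends to $1$ as $\epsilon\downarrow 0$; the variable $\sum_n a_n\xi_n$ is centered Gaussian with variance $\epsilon^4/15$, so $\P(A_2)\to 1$ as well. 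For $\epsilon$ sufficiently small, $\P(A_1)+\P(A_2)>1$, whence $\P\{Y\in K\}\geq \P(A_1\cap A_2)>0$.

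The delicate point is the joint calibration of $(c_n)$ and $(a_n)$: the fluctuations $a_n\xi_n$ must be dominated by $c_n$ with overwhelming probability so that $Y$ remains in $K$ and the total mass $\sum_n \lambda_n$ stays below $1$, yet the covariance $\sum_n a_n^2 y(w_n)^2$ must remain strictly positive for every non-zero $y\in X^*$. The super-exponential decay of Gaussian tails is what makes a choice like $a_n=\epsilon^2 4^{-n}$ viable, since the ratio $c_n/a_n=2^n/\epsilon$ grows geometrically while $a_n$ itself remains positive, so non-degeneracy on the dense span is preserved.
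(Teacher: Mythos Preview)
Your argument is correct. You reduce to $x=0$ by translation, then build an explicit non-degenerate Gaussian law charging $K$: the random series $Y=\sum_n(c_n+a_n\xi_n)w_n$ converges absolutely a.s., its pushforward along any non-zero $y\in X^*$ is Gaussian with variance $\sum_n a_n^2 y(w_n)^2>0$ by density of the span, and on the event $A_1\cap A_2$ the coefficients $\lambda_n=c_n+a_n\xi_n$ are non-negative with $\sum_n\lambda_n\le1$, so the partial sums lie in the convex hull and the limit lies in the closed convex hull $K$. Your calibration $c_n/a_n=2^n/\epsilon$ makes $\prod_n\Phi(2^n/\epsilon)$ a convergent product tending to $1$, and $\P(A_2)\to1$ is immediate from the variance computation; hence $\P(A_1\cap A_2)>0$ for small $\epsilon$.

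The paper does not actually prove this lemma: it simply cites \cite[Lemma~2.5]{chen2023free}, which in turn adapts \cite[Lemma~3]{phelps1978}. Your self-contained construction is in the spirit of the original Phelps argument---exhibiting one non-degenerate Gaussian that lives inside $K$ with positive probability---and delivers more than the paper does here. The paper gains brevity by outsourcing; your proof gains transparency by making the geometric idea (a small Gaussian cloud centered well inside the simplex-like set $K$) fully explicit, and by showing exactly how density of the span enters through non-degeneracy.
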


\begin{proof}
This is \cite[Lemma 2.5]{chen2023free} which itself is adapted from \cite[Lemma 3]{phelps1978}.
\end{proof}

\begin{lemma}\label{l.SBM.inf.dim.Rademacher}
Let $X$ be a separable Banach space. If a function $h:X\to \R$ is Lipschitz continuous, then it is Gateaux differentiable outside of a Gaussian null subset of $X$.
\end{lemma}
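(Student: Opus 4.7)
The plan is to show that, for every non-degenerate Gaussian measure $\gamma$ on $X$, the set of points at which $h$ fails to be Gateaux differentiable is $\gamma$-null, by running the classical finite-dimensional Rademacher theorem simultaneously along an exhausting sequence of finite-dimensional subspaces and reading off a single linear functional from the resulting Fréchet derivatives.

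First, I use separability of $X$ to fix an increasing sequence $V_1 \subseteq V_2 \subseteq \cdots$ of finite-dimensional subspaces of $X$ whose union is dense in $X$. For each $n$, pick a closed complement $W_n$ so that $X = V_n \oplus W_n$, and disintegrate $\gamma$ along the projection onto $W_n$: since $\gamma$ is non-degenerate, its conditional measures on the affine slices $w + V_n$, $w \in W_n$, are translated non-degenerate Gaussians on the finite-dimensional space $V_n$, hence absolutely continuous with respect to Lebesgue measure there. The restriction of $h$ to each slice $w + V_n$ is Lipschitz with the same constant as $h$, so by the finite-dimensional Rademacher theorem it is Fréchet differentiable at Lebesgue-almost every point of that slice. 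Assembling this via Fubini produces a $\gamma$-conull Borel set $E_n \subseteq X$ and, for each $x \in E_n$, a linear functional $L_{n,x}: V_n \to \mathbb{R}$ of norm at most $\|h\|_{\mathrm{Lip}}$ such that $h(x+v) = h(x) + L_{n,x}(v) + o(\|v\|)$ as $v \to 0$ inside $V_n$.

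On the $\gamma$-conull intersection $E := \bigcap_n E_n$, the family $\{L_{n,x}\}_n$ is automatically consistent, since for $n \leq m$ both $L_{n,x}$ and $L_{m,x}|_{V_n}$ are characterized as directional derivatives of $h$ at $x$ along vectors of $V_n$. They therefore determine a single linear functional $L_x$ on the dense subspace $\bigcup_n V_n$ with $\|L_x\| \leq \|h\|_{\mathrm{Lip}}$, which extends uniquely to an element of $X^*$. To upgrade this to the Gateaux differentiability of $h$ at $x$ I must verify that, for every $v \in X$ and not merely for $v \in \bigcup_n V_n$, the limit $\lim_{\epsilon \downarrow 0} (h(x+\epsilon v) - h(x))/\epsilon$ exists and equals $L_x(v)$. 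This follows by approximating $v$ by $v_k \in V_{n_k}$ with $v_k \to v$ and combining the Lipschitz bounds $|h(x+\epsilon v) - h(x+\epsilon v_k)| \leq \|h\|_{\mathrm{Lip}} \,\epsilon \,\|v-v_k\|$ and $|L_x(v) - L_x(v_k)| \leq \|h\|_{\mathrm{Lip}} \,\|v-v_k\|$ with a standard diagonal argument in $k$ and $\epsilon$.

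The main technical point — and the only place where non-degeneracy of $\gamma$ enters — is the disintegration step in the first paragraph above: for Lebesgue-null exceptional sets in each finite-dimensional slice to translate into a $\gamma$-null exceptional set in $X$, the conditional measures on slices must be absolutely continuous with respect to Lebesgue measure on $V_n$, which in turn requires that the projection of $\gamma$ onto $V_n$ has a positive density. Once this is arranged, the rest of the argument is careful bookkeeping of dense sequences and Lipschitz estimates; crucially, it works with full Fréchet differentiability along each subspace $V_n$ rather than mere directional derivatives along individual vectors, because that is what delivers the additivity of $v \mapsto L_x(v)$ for free and thus sidesteps the subtle linearity issue that is the main obstacle in proofs based on a single countable dense set of directions.
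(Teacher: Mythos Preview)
The paper does not prove this lemma at all: it simply cites \cite[Theorem~2.6]{chen2023free}, which is in turn a special case of \cite[Theorem~6]{phelps1978}. So there is no proof in the paper to compare against; the question is just whether your argument is correct.

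Your overall strategy --- disintegrate $\gamma$ along finite-dimensional slices, apply the finite-dimensional Rademacher theorem on each slice, and patch the Fr\'echet differentials together using Lipschitz continuity --- is exactly the right one, and your handling of the consistency and density steps is fine. But the disintegration step has a genuine gap. You assert that because $\gamma$ is non-degenerate, its conditional laws on the slices $w+V_n$ are non-degenerate Gaussians on $V_n$, hence absolutely continuous with respect to Lebesgue measure. This is \emph{not} true for an arbitrary finite-dimensional subspace $V_n\subseteq X$: if the conditionals on $V_n$-slices had everywhere-positive densities, then $\gamma$ would be quasi-invariant under every translation by an element of $V_n$, whereas the Cameron--Martin theorem says $\gamma$ is quasi-invariant under translation by $h$ if and only if $h$ lies in the Cameron--Martin space $H_\gamma$. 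So as soon as $V_n\not\subseteq H_\gamma$, the conditionals must fail to be absolutely continuous, and your Fubini/Rademacher argument breaks down. Since you chose the $V_n$ from separability of $X$ alone, with no reference to $\gamma$, nothing prevents this.

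The fix is to choose the $V_n$ inside $H_\gamma$ from the start: non-degeneracy of $\gamma$ is equivalent to $H_\gamma$ being dense in $X$, so you can still arrange $\overline{\bigcup_n V_n}=X$. With $V_n\subseteq H_\gamma$, the disintegration really does give absolutely continuous Gaussian conditionals on the slices (this is essentially the content of Cameron--Martin), and the rest of your argument goes through. One further small point you pass over: you also need the set of non-Gateaux-differentiability points to be Borel, since the paper's definition of ``Gaussian null set'' requires this; that is a standard measurability fact for Lipschitz functions on separable spaces, but it should be noted.
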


\begin{proof}
This is \cite[Theorem 2.6]{chen2023free} which itself is a special case of \cite[Theorem 6]{phelps1978}.
\end{proof}

\begin{lemma}\label{l.SBM.inf.dim.Rademacher.D01}
If $h:\Rp\times \Rp\times \D[0,1]\to \R$ is locally Lipschitz continuous in its first two variables and locally Lipschitz continuous with respect to the $L^2$-norm in its third variable, then it is Gateaux differentiable jointly in its three variables (in the sense of \eqref{e.SBM.GD.L2.density}) on a subset of $\Rpp\times \Rpp \times \mathcal{R}[0,1]$ that is dense in $\Rp\times \Rp \times \D[0,1]$.
\end{lemma}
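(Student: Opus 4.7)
The plan is to first extend $h$ Lipschitzly to the ambient separable Banach space $X := \R \times \R \times L^2[-1,1]$ and apply Lemma \ref{l.SBM.inf.dim.Rademacher} to produce a Gaussian null set outside of which the extension is Gateaux differentiable, and then to invoke Lemma \ref{l.SBM.not.Gaussian.crit} with a carefully tailored sequence of perturbations to locate a point of differentiability inside $\Rpp \times \Rpp \times \mathcal{R}[0,1]$ arbitrarily close to any prescribed target. Fix any $R > 0$. The assumption on $h$ ensures that its restriction to $[0,R]^2 \times \D[0,1]$ is Lipschitz continuous in the product metric on $X$, so the McShane--Whitney formula yields a Lipschitz extension $\widetilde h_R : X \to \R$. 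Lemma \ref{l.SBM.inf.dim.Rademacher} then produces a Gaussian null set $N_R \subset X$ outside of which $\widetilde h_R$ is Gateaux differentiable in the Banach-space sense.

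Fix $(t_0, s_0, q_0) \in \Rp \times \Rp \times \D[0,1]$ and $\epsilon > 0$, and choose $R$ large enough that the $\epsilon$-ball around $(t_0, s_0, q_0)$ lies in $[0, R)^2 \times L^2[-1,1]$. Mollification of $q_0$ combined with a small linear correction (enforcing $q(1) = 1$, strict monotonicity, and a strictly positive value at $-1$) shows that $\mathcal{R}[0,1]$ is dense in $\D[0,1]$ for the $L^2$-norm, so we may select $(t_1, s_1, q_1) \in \Rpp \times \Rpp \times \mathcal{R}[0,1]$ within distance $\epsilon/2$ of $(t_0, s_0, q_0)$ satisfying $q_1(-1) > 0$ and $\inf_{u \in [-1,1]} q_1'(u) > 0$. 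Set $\delta := \tfrac{1}{4}\min\bigl(t_1, s_1, q_1(-1), \inf q_1'\bigr)$.

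The heart of the argument is the construction of a sequence $(w_n)_{n\geq 1} = (\alpha_n, \beta_n, f_n)_{n \geq 1}$ in $X$ satisfying three conditions: its linear span is dense in $X$; $\|w_n\|_X \leq \epsilon\, 2^{-n-2}$; and every $f_n$ is a smooth function on $[-1,1]$ with $f_n(1) = 0$ such that, for every $k \geq 0$, the series $\sum_n \|f_n\|_{C^k([-1,1])}$ converges, with the sums $\sum_n \|f_n\|_{C^0}$ and $\sum_n \|f_n\|_{C^1}$ both bounded by $\delta$. We build such a sequence by starting from a countable dense subset of $\R^2 \times \{g \in C^\infty([-1,1]) : g(1) = 0\}$---whose linear span is dense in $X$ because smooth functions vanishing at $1$ are dense in $L^2[-1,1]$---and rescaling the $n$-th element by a factor small enough that $\|f_n\|_{C^n([-1,1])} \leq \delta 2^{-n}$ together with $|\alpha_n|, |\beta_n|, \|f_n\|_{L^2} \leq \epsilon 2^{-n-2}$; such rescaling preserves dense linear span. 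Let $K$ denote the closed convex hull of $\{0\} \cup \{w_n : n \geq 1\}$ in $X$. Every finite convex combination $w = (\alpha, \beta, f) \in \conv(\{0\} \cup \{w_n\})$ has $f \in C^\infty$ with $f(1) = 0$ and $\|f\|_{C^k} \leq \sum_n \|f_n\|_{C^k} < \infty$ for every $k$; passing to $L^2$-limits of such combinations and applying Arzel\`a--Ascoli in each $C^k$-class (diagonally in $k$), every $w = (\alpha, \beta, f) \in K$ is still $C^\infty$ with $f(1) = 0$, $\|f\|_{C^0} \leq \delta$, and $\|f\|_{C^1} \leq \delta$. Consequently, for each $w \in K$, the point $(t_1, s_1, q_1) + w$ lies in $\Rpp \times \Rpp \times \mathcal{R}[0,1]$: indeed $(q_1 + f)(1) = 1$, $(q_1 + f)' \geq \inf q_1' - \delta > 0$, and $(q_1 + f)(-1) \geq q_1(-1) - \delta > 0$, while the distance from $(t_1, s_1, q_1)$ remains at most $\epsilon/2$.

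By Lemma \ref{l.SBM.not.Gaussian.crit}, the set $(t_1, s_1, q_1) + K$ is not a Gaussian null subset of $X$, and hence cannot be contained in $N_R$. This yields a point $(t,s,q) \in \Rpp \times \Rpp \times \mathcal{R}[0,1]$ within distance $\epsilon$ of $(t_0, s_0, q_0)$ at which $\widetilde h_R$ is Gateaux differentiable. Because $\widetilde h_R$ coincides with $h$ on $\Rp \times \Rp \times \D[0,1]$, the directional derivatives of $\widetilde h_R$ and $h$ agree along every admissible direction $(t', s', q') \in \mathrm{Adm}(\Rp \times \Rp \times \D[0,1], (t,s,q))$. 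The Gateaux derivative of $\widetilde h_R$ is a continuous linear functional on $X$, whose restriction to the $L^2$-component is represented via Riesz as integration against an $L^2$-function---the density required in \eqref{e.SBM.GD.L2.density}---and uniqueness of this representation in the sense of \eqref{e.SBM.GD.2} follows since the admissible directions at $(t,s,q)$ span a dense subset of $X$. The main technical obstacle is the third step: because $\mathcal{R}[0,1]$ is very far from being open in $L^2[-1,1]$, controlling merely the $L^2$-norms of the perturbations is insufficient, and one must enforce summable $C^k$-norms for every $k$ in order to keep the closed convex hull $K$ inside the set of smooth paths compatible with $\mathcal{R}[0,1]$.
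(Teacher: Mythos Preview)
Your argument is correct and follows the same three-step skeleton as the paper: extend $h$ Lipschitzly to the ambient Banach space, apply Lemma~\ref{l.SBM.inf.dim.Rademacher} to obtain a Gaussian null exceptional set, and then invoke Lemma~\ref{l.SBM.not.Gaussian.crit} with a translated convex hull to locate a point of differentiability inside $\Rpp^2\times\mathcal{R}[0,1]$. The implementations differ. You extend via McShane--Whitney, whereas the paper composes with the metric projection onto the closed convex set $\D[0,1]$; either works. More substantively, the paper chooses perturbations $w_n\in\mathcal{R}[0,1]$ and appeals to convexity of $\mathcal{R}[0,1]$, while you take $f_n\in C^\infty$ with $f_n(1)=0$ and summable $C^k$-norms and use Arzel\`a--Ascoli to keep the $L^2$-closed hull $K$ inside the smooth class with the correct boundary value. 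Your treatment is in fact the more robust one on this point: with the paper's choice, a convex combination of $\{0\}\cup\{w_n\}$ with positive weight on $0$ has $\kappa(1)<1$, so $(q+\epsilon\kappa)(1)\neq 1$ and the inclusion $q+\epsilon K\subset\mathcal{R}[0,1]$ is not immediate; nor is it automatic that $L^2$-limits of smooth functions remain smooth without an argument of the kind you supply.

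Two small points. First, bounding $|\alpha_n|,|\beta_n|$ only by $\epsilon\,2^{-n-2}$ does not force $t_1+\alpha,\,s_1+\beta>0$ when $t_0$ or $s_0$ is near zero; also requiring $|\alpha_n|,|\beta_n|\le\delta\,2^{-n}$ in your rescaling fixes this at no cost. Second, your final sentence asserts that the admissible directions at $(t,s,q)$ span a dense subspace of $X$; this is true but not entirely obvious for the $q$-component, and the paper devotes a short computation (showing that $\D[0,1]-q$ has trivial orthogonal complement in $L^2$) to justify it.
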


\begin{proof}
For simplicity of notation, we will instead prove that a function $h:\D[0,1]\to \R$ that is locally Lipschitz continuous with respect to the $L^2$-norm is Gateaux differentiable on a subset of $\mathcal{R}[0,1]$ that is dense in $\D[0,1]$. Our proof generalizes with only modifications in notation to the setting of a function defined on $\Rp\times \Rp\times \D[0,1]$. Up to multiplying the function $h$ with a cutoff function that vanishes outside of a sufficiently large ball, we will also assume without loss of generality that $h:\D[0,1]\to \R$ is in fact uniformly Lipschitz continuous with respect to the $L^2$-norm.
 The proof proceeds in three steps. First, we construct a Lipschitz extension $\bar h:L^2[-1,1]\to \R$ of $h$, then we show that there exists a Gaussian null set $\mathcal{N}$ such that $h$ is Gateaux differentiable on $\mathcal{D}[0,1]\setminus \mathcal{N}$, and finally we prove that $\mathcal{R}[0,1]\setminus \mathcal{N}$ is dense in $\D[0,1]$.
\step{1: constructing the Lipschitz extension}
We denote by $P:L^2[-1,1]\to \D[0,1]$ the orthogonal projection onto the closed convex set $\D[0,1]$. We refer the reader to \cite[Exercise 2.16]{TD_JC_book} and its solution for a justification that this projection $P$ is well-defined based on the fact that $\D[0,1]$ is a closed convex subset of $L^2[-1,1]$. It is also shown there using the convexity of $\D[0,1]$ that, for every $q\in L^2[-1,1]$ and $p\in \D[0,1]$,
\begin{equation*}
\langle q-P(q), p- P(q)\rangle_{L^2}\leq 0.
\end{equation*}
It follows that for all $q,q'\in L^2[-1,1]$,
\begin{equation*}
\langle q-P(q), P(q')-P(q)\rangle_{L^2}\leq 0 \leq \langle q'-P(q'), P(q')-P(q)\rangle_{L^2}.
\end{equation*}
Rearranging and using the Cauchy-Schwarz inequality yields
\begin{equation*}
\abs{P(q)-P(q')}_{L^2}^2\leq \langle q'-q, P(q') -P(q)\rangle_{L^2}\leq \abs{q'-q}_{L^2}\abs{P(q)-P(q')}_{L^2}
\end{equation*}
from which it follows that
\begin{equation*}
\abs{P(q)-P(q')}_{L^2}\leq \abs{q'-q}_{L^2}.
\end{equation*}
This means that the extension $\bar h: L^2[-1,1]\to \R$ of $h$ defined by
\begin{equation*}
\bar h(q):=h(P(q))
\end{equation*}
is Lipschitz continuous.
\step{2: Gateaux differentiability outside a Gaussian null set}
By Lemma \ref{l.SBM.inf.dim.Rademacher}, there exists a Gaussian null set $\mathcal{N}\subset L^2[-1,1]$ such that $\bar h$ is Gateaux differentiable on $L^2[-1,1]\setminus \mathcal{N}$. We now fix $q\in \D[0,1]\setminus \mathcal{N}$, and prove that $h$ is Gateaux differentiable at $q$. Let $y\in (L^2[-1,1])^*\simeq L^2[-1,1]$ denote the Gateaux derivative of $\bar h$ at $q$. The limit $Dh(q;x)$ exists for every $x\in \mathrm{Adm}(\D[0,1],q)$, and it is equal to $\langle y,x\rangle_{L^2}$. Let us now assume that there exists $y'\in L^2[-1,1]$ with $\langle y',x\rangle_{L^2}=\langle y,x\rangle_{L^2}$ for all $x\in \mathrm{Adm}(\D[0,1],q)$, and prove that we must have $y'=y$. Observe that $\D[0,1]-q\subset \mathrm{Adm}(\D[0,1],q)$ by convexity of $\D[0,1]$. Moreover, the orthogonal complement of $\D[0,1]-q$ in $L^2[-1,1]$ is trivial. Indeed, suppose that $z\in L^2[-1,1]$ is such that $\langle z,q'\rangle_{L^2}=\langle z,q\rangle_{L^2}$ for all $q'\in \D[0,1]$. If $\mu'\in \Pr[-1,1]$ is such that $q'=F_{\mu'}$, then the Fubini-Tonelli theorem implies that
\begin{equation*}
\langle z,q'\rangle_{L^2}=\int_{-1}^1 z(u)\mu'[-1,u]\ud u=\int_{-1}^1 \int_{-1}^u z(u)\ud \mu'(v)\ud u=\int_{-1}^1 \int_v^1 z(u)\ud u \ud \mu'(v)=0.
\end{equation*}
Since $\D[0,1]$ is in one-to-one correspondence with $\Pr[-1,1]$, this means that for all $\mu'\in \Pr[-1,1]$,
\begin{equation*}
\int_{-1}^1 \int_v^1 z(u)\ud u \ud \mu'(v)=\langle z,q\rangle_{L^2}.
\end{equation*}
Choosing $\mu'=\delta_v$ for each $v\in [-1,1]$, and differentiating the resulting expression with respect to $v$ shows that $z=0$. This means that the closed linear span of $\D[0,1]-q$ is $L^2[-1,1]$ and that $\langle y',x\rangle_{L^2}=\langle y,x\rangle_{L^2}$ for all $x\in L^2[-1,1]$. It follows that $y'=y$, and therefore that $h$ is Gateaux differentiable on $\D[0,1]\setminus \mathcal{N}$.
\step{3: density of $\mathcal{R}[0,1]\setminus \mathcal{N}$ in $\D[0,1]$}
The set $\mathcal{R}[0,1]$ has dense linear span in $L^2[-1,1]$, so 
the separability of $L^2[-1,1]$ allows us to find a sequence $(w_n)_{n\geq 1}\subset \mathcal{R}[0,1]$ whose linear span is dense in $L^2[-1,1]$. Re-scaling each $w_n$, we can assume without loss of generality that $\abs{w_n}_{L^2}\leq 1$ and that $\lim_{n\to +\infty}\abs{w_n}_{L^2}=0$. Let $K$ be as in Lemma \ref{l.SBM.not.Gaussian.crit}, and observe that $K\subset \mathcal{R}[0,1]$ by convexity of $\mathcal{R}[0,1]$. At this point, fix $q\in \mathcal{R}[0,1]$ as well as $\epsilon >0$. Since $q+\epsilon K$ is not a Gaussian null set, we have that $q+\epsilon K\not\subset\mathcal{N}$. In particular, there exists $y\in q+\epsilon K\setminus \mathcal{N}$. Notice that for any $\kappa\in K$, we have $(q+\epsilon \kappa)'>0$ so in fact $y\in \mathcal{R}\setminus \mathcal{N}$ with $\abs{y-q}_{L^2}\leq \epsilon$. This establishes the density of $\mathcal{R}[0,1]\setminus \mathcal{N}$ in $\mathcal{R}[0,1]$. Together with the density of $\mathcal{R}[0,1]$ in $\D[0,1]$ which can be obtained through mollifiers, this completes the proof.
\end{proof}

\begin{proof}[Proof of Proposition \ref{p.regularity.differentiability}]
We denote by $\widetilde{f}':\Rp\times\Rp\times \D[0,1]\to\R$ the limit of the sequence of extended free energies $\smash{(\widetilde{F}_N')_{N\geq 1}}$ defined in \eqref{e.SBM.en.FE.s.on.D01} along the subsequence $\smash{(N_k)_{k\geq 0}}$. Invoking Lemma \ref{l.SBM.inf.dim.Rademacher.D01}, we can find a dense subset $\A$ of $\smash{\Rpp\times \Rpp\times \mathcal{R}[0,1]}$ on which $\smash{\widetilde{f}'}$ is Gateaux differentiable jointly in its three variables. By Lemma \ref{l.regularity.GD.h.tilde}, the limit free energy $f:\Rp\times \M_+\to \R$ is Gateaux differentiable jointly in its two variables on the set
\begin{equation}
\mathcal{B}:=\big\{\big(t,s\mathcal{F}^{-1}(q)\big)\mid (t,s,q)\in \mathcal{A}\big\}\subset \Rp\times \mathrm{Reg}(\M_+).
\end{equation}
We now show that this set is dense in $\Rp\times \M_+$. We fix $(t,\mu)\in \Rp\times \M_+$, and aim to show that there is a sequence $(t_n,\mu_n)_{n\geq 1}\subset \mathcal{B}$ such that $t_n\to t$ and $\mu_n\to \mu$ weakly. Let $s:=\mu[-1,1]$, and consider the point $(t,s,F_{\bar \mu})\in \Rp\times \Rp \times \D[0,1]$. There exists a sequence $(t_n,s_n,q_n)_{n\geq 1}\subset \mathcal{A}$ converging to $(t,s,F_{\bar\mu})$, and we define $\mu_n:=s_n\mathcal{F}^{-1}(q_n)$. By compactness, the sequence $\mu_n$ admits a subsequential limit. Since $F_{\mu_n}=s_nq_n\to sF_{\bar \mu}=F_\mu$ in $L^2$, the only possible subsequential limit is $\mu$ so indeed $\mu_n\to \mu$ weakly.
\end{proof}

\subsection{Convergence of the derivative free energy}
\label{subsec:regularity.convexity}

The convergence of the derivative free energy stated in Proposition \ref{p.regularity.cont.of.derivative} will be deduced from a convexity property of the free energy. Although the enriched free energy \eqref{e.intro.SBM.en.FE} is known not to be convex \cite{kireeva2023breakdown}, we now use the information-theoretic arguments in \cite[Proposition 4.1]{kireeva2023breakdown} to show that it is convex in certain directions. Together with a computation similar to that in \cite[Proposition~5.4]{chen2023free} we then establish Proposition \ref{p.regularity.cont.of.derivative}.

\begin{proposition}\label{p.regularity.en.FE.convex}
For all $(t, \mu), (s,\nu) \in \Rp\times \M_+$, the map $r\mapsto \bar F_N(t+ r s,\mu+r\nu)$ is convex.
\end{proposition}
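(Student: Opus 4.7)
The plan is to follow the information-theoretic strategy of \cite[Proposition 4.1]{kireeva2023breakdown} and prove convexity by showing that $r\mapsto\partial_r\bar F_N(t,\mu+r\nu)$ is non-decreasing. Starting from the derivative formula \eqref{e.SBM.en.FE.der.mu} and the chain rule, one obtains
\[
\partial_r\bar F_N(t,\mu+r\nu)=\int_{-1}^1\bigl[\E\, g_x(\langle\sigma_1\rangle_r)-c-\Delta\m x\bigr]\,\ud\nu(x)+\BigO(N^{-1}),
\]
where $g_x(y):=(c+\Delta yx)\log(c+\Delta yx)$ and $\langle\cdot\rangle_r$ denotes the Gibbs average \eqref{e.SBM.Gibbs} associated with $H_N^{t,\mu+r\nu}$. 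I would first verify that $g_x$ is convex: a direct computation yields $g_x''(y)=\Delta^2 x^2/(c+\Delta yx)\geq 0$ for $y\in[-1,1]$, positivity following from the standing assumption $|\Delta|<c$ (so that $c+\Delta yx>0$).

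The next step is to establish the monotonicity of $r\mapsto\E\,g_x(\langle\sigma_1\rangle_r)$ for each fixed $x$, via a Doob-martingale argument. Using the superposition property of Poisson point processes, I would couple the datasets $\D_N^{t,\mu+r\nu}$ for different values of $r$ on a single probability space: for $r_1\leq r_2$, realize the intensity-$N(\mu+r_2\nu)$ Poisson process as the disjoint union of the intensity-$N(\mu+r_1\nu)$ process and an independent intensity-$N(r_2-r_1)\nu$ process, and generate the Bernoulli observations $G_i^x$ attached to each point once and for all. In this coupling $\D_N^{t,\mu+r_1\nu}$ is a measurable function of $\D_N^{t,\mu+r_2\nu}$, so the posterior mean $\langle\sigma_1\rangle_r=\E[\sigma_1^*\mid \D_N^{t,\mu+r\nu}]$ is a Doob martingale in $r$. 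The tower property combined with Jensen's inequality applied to the convex function $g_x$ then gives
\[
\E\,g_x(\langle\sigma_1\rangle_{r_1})=\E\,g_x\bigl(\E[\langle\sigma_1\rangle_{r_2}\mid \D_N^{t,\mu+r_1\nu}]\bigr)\leq\E\,g_x(\langle\sigma_1\rangle_{r_2}),
\]
and integrating against $\ud\nu(x)$ yields the desired monotonicity of $\partial_r\bar F_N(t,\mu+r\nu)$ in $r$, up to the $\BigO(N^{-1})$ remainder.

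The main obstacle is removing this $\BigO(N^{-1})$ remainder to obtain the \emph{exact} convexity claimed by the proposition at finite $N$. I would address this by applying the same martingale-plus-Jensen reasoning to the un-Taylor-expanded derivative formula, in which the remainder is replaced by an exact expectation of $\tilde h_x(\langle\sigma_1\rangle_r)$ with $\tilde h_x(y):=N\bigl(1-(c+\Delta yx)/N\bigr)\log\bigl(1-(c+\Delta yx)/N\bigr)$. A direct computation gives $\tilde h_x''(y)=\Delta^2 x^2/(N-c-\Delta yx)\geq 0$ whenever $N>c+|\Delta|$, which is precisely the regime in which the stochastic block model \eqref{e.intro.SSBM.edge.probs} is well-defined; hence $\tilde h_x$ is also convex in $y$, the martingale-plus-Jensen step applies exactly, and the monotonicity of $\partial_r\bar F_N(t,\mu+r\nu)$ in $r$ holds without remainder. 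The essential conceptual ingredient imported from \cite[Proposition 4.1]{kireeva2023breakdown} is the structural observation that the derivative of the free energy is an average of a convex function of the Bayes-optimal posterior mean, which combines with the martingale property of that mean under the Poisson coupling to yield convexity of $\bar F_N$ along the one-dimensional slice $r\mapsto\mu+r\nu$.
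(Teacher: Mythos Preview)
Your argument is correct and gives an alternative, more streamlined proof than the one in the paper. After the Nishimori step (implicit in the derivation of~\eqref{e.SBM.en.FE.der.mu} and its exact analogue~\eqref{e.SBM.en.FE.der.mu.exact}), the exact derivative is indeed $\int_{-1}^1 \E\,\phi_N(\langle\sigma_1\rangle_r,x)\,\ud\nu(x)$ with $\phi_N(y,x)=g_x(y)+\tilde h_x(y)$, and your convexity checks on $g_x$ and $\tilde h_x$ are right; the Poisson coupling then makes $\langle\sigma_1\rangle_r=\E[\sigma_1^*\mid \D_N^{t,\mu+r\nu}]$ a martingale in~$r$, and conditional Jensen gives the monotonicity of the derivative without remainder.

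The paper proceeds differently: it first rewrites $\bar F_N(t,\mu+r\nu)$ as a linear function of $r$ minus a mutual information $I_N(r)$, then computes $\partial_r^2 I_N(r)$ by expanding in the Poisson number of observations and shows each resulting second-difference term $I_N^{i,j}(L_1{+}1,L_2{+}1)-I_N^{i,j}(L_1{+}1,L_2)-I_N^{i,j}(L_1,L_2{+}1)+I_N^{i,j}(L_1,L_2)$ is non-positive via repeated applications of the chain rule for mutual information and conditional independence. Your route bypasses both the mutual-information reformulation and the chain-rule gymnastics, trading them for the single observation that the exact derivative is a $\nu$-average of a convex function of the Bayes posterior mean. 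This is shorter and conceptually cleaner in the present two-community $\pm 1$ setting; the paper's argument, being phrased purely in terms of mutual informations between observations and signal, is more agnostic to the algebraic form of the likelihood and may transfer with fewer modifications to variants where the derivative is not so neatly a convex function of a scalar posterior statistic.
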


\begin{proof}
The claim essentially follows from the general fact that if $S$ is a random variable and if $(X_n)_{n \ge 1}$ are i.i.d.\ conditionally on $S$, then the mutual information between $S$ and $(X_n)_{n \le \Poi(t)}$ is a concave function of $t$ (see e.g.\ the proof of \cite[Proposition 4.1]{kireeva2023breakdown}). In order to lighten the notation, we only prove the result for $s = 0$. (We note that for later purposes, all we really need is to apply Proposition~\ref{p.regularity.en.FE.convex} either with $s = 0$ or with $\nu = 0$; and the proof in the case $\nu = 0$ really boils down to the general observation above.)
Without loss of generality, we may assume that $\nu \in \Pr[-1,1]$. 
The superposition principle for Poisson point processes implies that, jointly in $\sigma\in \Sigma_N$,
\begin{equation*}
H_N^{\mu+r\nu}(\sigma) \stackrel{d}{=} H_N^\mu(\sigma)+H_N^{r\nu}(\sigma),
\end{equation*}
where we understand that the Hamiltonians $H_N^\mu(\sigma)$ and $H_N^{r\nu}(\sigma)$ are independent conditionally on the ground truth $\sigma^*$. Conditionally on the ground truth $\sigma^*$, the Poisson point processes $(\Lambda_i(N\mu))_{i\leq N}$, and the Poisson random variable $\Pi_t$, we define the ``prior'' measure on $\Sigma_N$ by
\begin{equation*}
P_{t,\mu}^*(\sigma):=\exp \big(H_N^t(\sigma)+H_N^{\mu}(\sigma)\big)P_N^*(\sigma)
\end{equation*}
in such a way that
\begin{equation*}
\bar F_N(t,\mu+r\nu)=\frac{1}{N}\E\log \int_{\Sigma_N}\exp H_N^{r\nu}(\sigma)\ud P_{t,\mu}^*(\sigma).
\end{equation*}
The measure $P_{t,\mu}^*$ is not necessarily a probability measure, but this will not have any significant effect.
The proof now proceeds in two steps. First, we relate the free energy $\bar F_N(t,\mu+r\nu)$ to a mutual information $I_N(r)$, and argue that $r\mapsto \bar F_N(t,\mu+r\nu)$ is convex in $r$ if and only if this mutual information is concave. Then, we compute the second derivative of the mutual information, and we show that it is non-positive.
\step{1: introducing a mutual information}
Conditionally on the ground truth $\sigma^*$, the Poisson point processes $(\Lambda_i(N\mu))_{i\leq N}$, and the Poisson random variable $\Pi_t$, we sample a signal $\sigma^\diamond$ from the normalized prior distribution $\bar P_{t,\mu}^*$ as well as a family $\Lambda^{r\nu}:=(\Lambda_i(Nr\nu))_{i\leq N}$ of Poisson point processes. Conditionally on the family of Poisson point processes $\Lambda^{r\nu}$, we sample the Bernoulli random variables $\widetilde{\mathbf{G}}_N:=\{\widetilde{G}_i^x \mid x\in \Lambda_i(Nr\nu),  i\leq N\}$ with conditional law
\begin{equation*}
\P\big\{\widetilde{G}_i^x=1\mid \sigma^\diamond, x\big\}:=\frac{c+\Delta \sigma^\diamond_ix}{N}.
\end{equation*}
The likelihood of the model is given by
\begin{equation*}
\P\big\{\widetilde{\mathbf{G}}_N=(\widetilde{G}_{i}^x)\mid \sigma^\diamond =\sigma\big\}=\prod_{i<N}\prod_{x\in \Lambda_i(Nr\nu)}\Big(\frac{c+\Delta \sigma_ix}{N}\Big)^{\widetilde{G}_{i}^x}\Big(1-\frac{c+\Delta \sigma_ix}{N}\Big)^{1-\widetilde{G}_{i}^x},
\end{equation*}
and Bayes' formula implies that the posterior of the model is the Gibbs measure
\begin{equation*}
\P\big\{\sigma^\diamond=\sigma \mid \widetilde{\mathbf{G}}_N=(\widetilde{G}_{i}^x)\big\}=\frac{\exp\big(H_N^\diamond(\sigma)\big) P_{t,\mu}^*(\sigma)}{\int_{\Sigma_N}\exp\big(H_N^\diamond(\tau)\big)\ud  P_{t,\mu}^*(\tau)}
\end{equation*}
associated with the Hamiltonian
\begin{equation}\label{e.regularity.en.FE.convex.H}
H_N^\diamond(\sigma):=\sum_{i\leq N}\sum_{x\in \Lambda_i(Nr\nu)}\log\bigg[\big(c+\Delta \sigma_ix\big)^{\widetilde{G}_{i}^x}\Big(1-\frac{c+\Delta \sigma_ix}{N}\Big)^{1-\widetilde{G}_{i}^x}\bigg].
\end{equation}
It follows that 
\begin{equation}\label{e.regularity.en.FE.convex.MI}
I_N(r):=\frac{1}{N}I\big(\widetilde{\mathbf{G}}_N;\sigma^\diamond\big)=\E\sum_{x\in \Lambda_1(Nr\nu)}\log\bigg[\big(c+\Delta \sigma_1^\diamond x\big)^{\widetilde{G}_{1}^x}\Big(1-\frac{c+\Delta \sigma_1^\diamond x}{N}\Big)^{1-\widetilde{G}_{1}^x}\bigg]-\overline{F}_N^\diamond(r)
\end{equation}
for the free energy defined on $\Sigma_N$ by
\begin{equation*}
\bar F_N^\diamond(r):= \frac{1}{N}\E\log \int_{\Sigma_N} \exp H_N^\diamond(\sigma)\ud P_{t,\mu}^*(\sigma).
\end{equation*}
Together with \cite[Proposition 5.12]{TD_JC_book} on Poisson averages, this implies that
\begin{equation*}
I_N(r)=Nr\E\int_{-1}^1 \log\bigg[\big(c+\Delta \sigma_1^\diamond x\big)^{\widetilde{G}_{1}^x}\Big(1-\frac{c+\Delta \sigma_1^\diamond x}{N}\Big)^{1-\widetilde{G}_{1}^x}\bigg]\ud \nu(x)-\overline{F}_N^\diamond(r),
\end{equation*}
so the free energy $\bar F_N^\diamond(r)$ is convex if and only if the mutual information $I_N(r)$ is concave. Averaging the free energy $\bar F_N^\diamond(r)$ over the randomness of the ground truth $\sigma^*$, the Poisson point processes $(\Lambda_i(N\mu))_{i\leq N}$, and the Poisson random variable $\Pi_t$ yields the free energy $\bar F_N(t,\mu+r\nu)$. It therefore suffices to prove that $r\mapsto I_N(r)$ is concave.
\step{2: showing the mutual information is concave}
The properties of Poisson point processes imply that the Hamiltonian \eqref{e.regularity.en.FE.convex.H} may be written as
\begin{equation*}
H_N^\diamond(\sigma)=\sum_{i\leq N}\sum_{k\leq \Pi_{i,r}}\log\bigg[\big(c+\Delta \sigma_ix_{i,k}\big)^{\widetilde{G}_{i,k}}\Big(1-\frac{c+\Delta \sigma_ix_{i,k}}{N}\Big)^{1-\widetilde{G}_{i,k}}\bigg],
\end{equation*}
where the random variables $(x_{i,k})_{i,k\geq 1}$ are i.i.d.\@ with law $\nu$,  the random variables $(\Pi_{i,r})_{i\leq N}$ are independent Poisson random variables with mean $Nr$, and $(\widetilde{G}_{i,k})_{i,k\geq 1}$ are Bernoulli random variables with conditional distribution
\begin{equation*}
\P\big\{\widetilde{G}_{i,k}=1\mid \sigma^\diamond, x_{i,k}\big\}:=\frac{c+\Delta \sigma^\diamond_ix_{i,k}}{N}.
\end{equation*}
This means that the mutual information \eqref{e.regularity.en.FE.convex.MI} depends on $r$ only through the Poisson random variable $\Pi_{i,r}$. With this in mind, given $i,j\leq N$ and $L_1,L_2\geq 0$, introduce the conditional mutual information functionals
\begin{equation*}
I_N^i(L_1):=\frac{1}{N}I\big(\widetilde{\mathbf{G}}_N;\sigma^\diamond\mid \Pi_{i,r}=L_1\big) \quad \text{and} \quad I_N^{i,j}(L_1,L_2):=I\big(\widetilde{\mathbf{G}}_N;\sigma^\diamond\mid \Pi_{i,r}=L_1, \Pi_{j,r}=L_2\big).
\end{equation*}
For every $\lambda>0$ and integer $m\geq 0$, we write
\begin{equation}\label{eqn: SBM Poisson density}
\pi(\lambda,m)=\frac{\lambda^m}{m!}\exp(-\lambda)
\end{equation}
to denote the probability for a Poisson random variable with parameter $\lambda$ to be equal to $m$.
A direct computation leveraging the product rule shows that
\begin{equation*}
\partial_rI_N(r)=\sum_{i\leq N}\sum_{L\geq 0}\pi(rN,L)\big(I_N^i(L+1)-I_N^i(L)\big).
\end{equation*}
Another direct computation leveraging the product rule reveals that 
\begin{align*}
\partial_r^2 I_N(r)=&N\sum_{i\leq N}\sum_{L_1\geq 0}\pi(rN,L_1)\big(I_N^i(L_1+2)-2I_N^i(L_1+1)+I_N^i(L_1)\big)\\
&+N\sum_{i\leq N}\sum_{j\neq i}\sum_{L_1,L_2\geq 0}\pi(rN,L_1)\pi(rN,L_2)\big(I_N^{i,j}(L_1+1,L_2+1)-I_N^{i,j}(L_1+1,L_2)\\
&\qquad\qquad\qquad\qquad\qquad\qquad\qquad\qquad\qquad\quad-I_N^{i,j}(L_1,L_2+1)+I_N^{i,j}(L_1,L_2)\big).
\end{align*}
It therefore suffices to show that for fixed $i\neq j\leq N$ and $L_1,L_2\geq 0$,
\begin{align}
I_N^i(L_1+2)-2I_N^i(L_1+1)+I_N^i(L_1)&\leq 0, \label{e.regularity.en.FE.convex.in.mass.goal.0}\\
I_N^{i,j}(L_1+1,L_2+1)-I_N^{i,j}(L_1+1,L_2)-I_N^{i,j}(L_1,L_2+1)+I_N^{i,j}(L_1,L_2)&\leq 0.\label{e.regularity.en.FE.convex.in.mass.goal.1}
\end{align}
We will only prove \eqref{e.regularity.en.FE.convex.in.mass.goal.1} as the proof of \eqref{e.regularity.en.FE.convex.in.mass.goal.0} is similar. 
To alleviate notation, conditionally on the events $\Pi_{i,r}=L_1$ and $\Pi_{j,r}=L_2$ introduce the random vectors
\begin{align*}
\widetilde{\mathbf{G}}_N'&:=\big\{\widetilde{G}_{m,k}\mid  m\neq i,\,\,\, m\neq j \text{ and } k\leq \Pi_{m,r}\big\}\quad \text{and} \quad Z:=\big((\widetilde{G}_{i,\ell})_{\ell\leq L_1}, (\widetilde{G}_{j,\ell})_{\ell\leq L_2}, \widetilde{\mathbf{G}}_N'\big).
\end{align*}
By the chain rule for mutual information,
\begin{align*}
I_N^{i,j}(L_1+1,L_2+1)&=I((\widetilde{G}_{i,L_1+1}, \widetilde{G}_{j,L_2+1}),Z;\sigma^\diamond)=I((\widetilde{G}_{i,L_1+1}, \widetilde{G}_{j,L_2+1});\sigma^\diamond\mid Z)+I(Z;\sigma^\diamond),\\
I_N^{i,j}(L_1+1,L_2)&=I(\widetilde{G}_{i,L_1+1},Z;\sigma^\diamond)=I(\widetilde{G}_{i,L_1+1};\sigma^\diamond\mid Z)+I(Z;\sigma^\diamond),\\
I_N^{i,j}(L_1,L_2+1)&=I(\widetilde{G}_{j,L_2+1},Z;\sigma^\diamond)=I(\widetilde{G}_{j,L_2+1};\sigma^\diamond\mid Z)+I(Z;\sigma^\diamond).
\end{align*}
It follows that \eqref{e.regularity.en.FE.convex.in.mass.goal.1} is equivalent to the inequality
\begin{equation}\label{e.regularity.en.FE.convex.in.mass.goal.2}
I((\widetilde{G}_{i,L_1+1}, \widetilde{G}_{j,L_2+1});\sigma^\diamond\mid Z)-I(\widetilde{G}_{i,L_1+1};\sigma^\diamond\mid Z)-I(\widetilde{G}_{j,L_2+1};\sigma^\diamond\mid Z)\leq 0.
\end{equation}
Another application of the chain rule for mutual information reveals that
\begin{equation*}
I((\widetilde{G}_{i,L_1+1}, \widetilde{G}_{j,L_2+1});\sigma^\diamond\mid Z)=I(\widetilde{G}_{i,L_1+1};\sigma^\diamond\mid Z)+I( \widetilde{G}_{j,L_2+1};\sigma^\diamond\mid Z, \widetilde{G}_{i,L_1+1})
\end{equation*}
and that the second term on the right side of this expression is given by
\begin{align*}
I( \widetilde{G}_{j,L_2+1};&(\sigma^\diamond, \widetilde{G}_{i,L_1+1})\mid Z)-I(\widetilde{G}_{j,L_2+1}; \widetilde{G}_{i,L_1+1}\mid Z)\\
&=I( \widetilde{G}_{j,L_2+1};\sigma^\diamond\mid Z)+I( \widetilde{G}_{j,L_2+1};\widetilde{G}_{i,L_1+1}\mid Z, \sigma^\diamond)-I(\widetilde{G}_{j,L_2+1}; \widetilde{G}_{i,L_1+1}\mid Z).
\end{align*}
It follows that the left side of \eqref{e.regularity.en.FE.convex.in.mass.goal.2} is
\begin{equation*}
I( \widetilde{G}_{j,L_2+1};\widetilde{G}_{i,L_1+1}\mid Z, \sigma^\diamond)-I(\widetilde{G}_{j,L_2+1}; \widetilde{G}_{i,L_1+1}\mid Z)=-I(\widetilde{G}_{j,L_2+1}; \widetilde{G}_{i,L_1+1}\mid Z)\leq 0,
\end{equation*}
where we have used that, conditionally on $\sigma^\diamond$, the random variables $\smash{\widetilde{G}_{j,L_2+1}}$, $\smash{\widetilde{G}_{i,L_1+1}}$, and $Z$ are independent to assert that $\smash{I( \widetilde{G}_{j,L_2+1};\widetilde{G}_{i,L_1+1}\mid Z, \sigma^\diamond)=0}$. This establishes \eqref{e.regularity.en.FE.convex.in.mass.goal.2} and completes the proof.
\end{proof}

\begin{proof}[Proof of Proposition \ref{p.regularity.cont.of.derivative}]
The claim concerning the convergence of the derivative with respect to the $t$ variable is a direct consequence of the convexity in $t$ ensured by Proposition~\ref{p.regularity.en.FE.convex}, see for instance \cite[Proposition~2.15]{TD_JC_book}. We now turn to the convergence of the derivative with respect to $\mu$. Fix $r,\eta>0$ and a probability measure $\nu \in \Pr[-1,1]\cap \mathrm{Reg}(\M_+)$. We define the measure $\nu_\eta\in \mathrm{Reg}(\M_+)$ by $\nu_\eta:=\eta\nu$. The convexity property of the enriched free energy in Proposition \ref{p.regularity.en.FE.convex} gives the upper bound
\begin{equation*}
\bar F_N(t,\mu+r\nu_\eta)=\bar F_N(t,\mu+(r\cdot 1+(1-r)\cdot 0)\nu_\eta)\leq r\bar F_N(t,\mu+\nu_\eta)+(1-r)\bar F_N(t,\mu).
\end{equation*}
Subtracting $\bar F_N(t,\mu)$ from both sides of this inequality, dividing by $r$, and letting $r$ tend to zero yields
\begin{equation*}
\eta D_\mu \bar F_N(t,\mu; \nu)\leq \bar F_N(t,\mu+\nu_\eta)-\bar F_N(t,\mu).
\end{equation*}
If $D_{t,\mu}$ denotes a weak subsequential limit of $D_\mu \bar F_{N_K}(t,\mu,\cdot)$, then letting $K$ tend to infinity, dividing by $\eta$, and letting $\eta$ tend to zero gives
\begin{equation*}
D_{t,\mu}(\nu)\leq D_\mu f(t,\mu; \nu).
\end{equation*}
Repeating the argument with $\eta$ replaced by $-\eta$, which is possible as $\mu,\nu\in \mathrm{Reg}(\M_+)$, shows that for any $\nu \in \mathrm{Reg}(\M_+)$, we have $D_{t,\mu}(\nu)=D_\mu f(t,\mu;\nu)$. By density of $\mathrm{Reg}(\M_+)$ in $\M_+$, this completes the proof.
\end{proof}

\section{Multioverlap concentration through perturbation}
\label{sec:concentration}

In the next section we will perform the cavity computation that gives rise to the functional \eqref{e.intro.HJ.functional} and leads to the critical point representation in Theorem \ref{t.intro.main}. This cavity computation will require an understanding of the asymptotic behavior of the array $(\sigma_i^\ell)_{i,\ell\geq 1}$ of i.i.d.\@ replicas sampled from the Gibbs measure \eqref{e.SBM.Gibbs}. In this section, we leverage the main result in \cite{barbier2022strong} to argue that, up to a small perturbation of the enriched Hamiltonian \eqref{e.intro.SBM.en.H} that does not affect the limit of the enriched free energy \eqref{e.intro.SBM.en.FE}, the asymptotic spin array $(\sigma_i^\ell)_{i,\ell\geq 1}$ is generated by some probability measure $\nu \in \M_p$ according to \eqref{gen.spin.array.1}-\eqref{gen.spin.array.2}. In particular, as we will see in Lemma \ref{l.multioverlap.asymptotic.average}, this means that the average of a product of finitely many replicas is asymptotically equal to a product of moments of~$\nu$. More precisely, given any finite set of $n$ replicas and collection $\{\mathcal{C}_\ell\}_{\ell\leq n}$ of finite indices,
\begin{equation}\label{e.multioverlap.asymptotic.average.0}
\lim_{N\to +\infty}\E\prod_{\ell\leq n}\Big\langle \prod_{i\in \mathcal{C}_\ell}\sigma_i^\ell \Big\rangle=\prod_{\ell\leq n}\E x_\ell^{\abs{\mathcal{C}_\ell}},
\end{equation}
where the $(x_\ell)_{\ell \ge 1}$ are i.i.d.\ with law $\nu$. 
This observation will be crucial in Lemma \ref{l.cavity.AN.limit}, where we study the asymptotic behavior of the cavity representation and first arrive at the functional \eqref{e.intro.HJ.functional}.

The perturbation Hamiltonians that will enforce the rigid asymptotic structure described by \eqref{gen.spin.array.1}-\eqref{gen.spin.array.2} depend on a perturbation parameter $\lambda:=(\lambda_k)_{k\geq 0}$ with $\lambda_k\in [2^{-k-1}, 2^{-k}]$ for $k\geq 0$, and they enforce the concentration of the multioverlaps
\begin{equation}\label{e.multioverlaps.def}
R_{1,\ldots,n}:=\frac{1}{N}\sum_{i\leq N}\sigma_i^{1}\cdots \sigma_i^{n}.
\end{equation}
As will be explained below, here $(\sigma_i^\ell)_{i,\ell\geq 1}$ denote i.i.d.\@ replicas from the Gibbs measure associated with the sum of the Hamiltonian \eqref{e.intro.SBM.en.H} and the two perturbation Hamiltonians that we now introduce. The first perturbation Hamiltonian will ensure the concentration of the overlap $R_{1,2}$. Given a sequence $(\epsilon_N)_{N\geq 1}$ with $\epsilon_N:=N^\gamma$ for some $-1/8<\gamma<0$ and a standard Gaussian vector $Z_0:=(Z_{0,1},\ldots,Z_{0,N})$ in $\R^N$, we introduce the Gaussian perturbation Hamiltonian
\begin{equation}\label{e.multioverlap.gaussian.pert}
H_N^{\mathrm{gauss}}(\sigma,\lambda_0):=\HH_0:=\sum_{i\leq N}\big(\lambda_0\epsilon_N \sigma^*_i\sigma_i+\sqrt{\lambda_0\epsilon_N}Z_{0,i}\sigma_i\big)
\end{equation}
associated with the task of recovering the signal $\sigma^*$ from the data
\begin{equation}
Y^{\mathrm{gauss}}:=\sqrt{\lambda_0\epsilon_N}\sigma^*+Z_0.
\end{equation}
Notice that
\begin{equation}\label{e.multioverlap.epsilon.sequence}
1\geq \epsilon_N \to 0 \quad \text{and} \quad N\epsilon_N\to +\infty.
\end{equation}
The second perturbation Hamiltonian will ensure the validity of the Franz-de Sanctis identities \cite{barbier2022strong, dominguez2024mutual} which together with the concentration of the overlap $R_{1,2}$ give the concentration of all the multioverlaps \eqref{e.multioverlaps.def}. We consider a sequence $(s_N)_{N\geq 1}$ with $s_N:=N^\eta$ for some $4/5<\eta<1$, so that in particular
\begin{equation}\label{e.multioverlap.s.sequence}
\frac{s_N}{N}\to 0\quad \text{and} \quad \frac{s_N}{\sqrt{N}}\to +\infty.
\end{equation}
We fix a sequence of i.i.d.\@ random variables $(\pi_k)_{k\geq 1}$ with $\Poi(s_N)$ distribution as well as a sequence $e:=(e_{jk})_{j,k\geq 1}$ of random variables with $\Exp(1)$ distribution and a family $(i_{jk})_{j,k\geq 1}$ of random indices uniformly sampled from the set $\{1,\ldots,N\}$. We define the exponential perturbation Hamiltonian by
\begin{equation}\label{e.multioverlap.exponential.pert}
H_N^{\mathrm{exp}}(\sigma):=\sum_{k\geq 1} \HH_k, \quad \text{where}\quad \HH_k:=\sum_{j\leq \pi_k}\bigg(\log(1+\lambda_k\sigma_{i_{jk}}\big)-\frac{\lambda_k e_{jk}\sigma_{i_{jk}}}{1+\lambda_k\sigma_{i_{jk}}^*}\bigg).
\end{equation}
This is the Hamiltonian associated with the task of recovering the signal $\sigma^*$ from the independently generated data
\begin{equation}
Y^{\mathrm{exp}}:=\bigg\{\frac{e_{jk}}{1+\lambda_k\sigma^*_{i_{jk}}} \mid  j\leq \pi_k \text{ and } k\geq 1\bigg\}.
\end{equation}
For each $(t,\mu)\in \Rp\times \M_+$, the perturbed enriched Hamiltonian is the sum of the enriched Hamiltonian \eqref{e.intro.SBM.en.H} and the perturbation Hamiltonians just defined,
\begin{equation}\label{e.multioverlap.pert.H}
H_N^{t,\mu}(\sigma,\lambda):=H_N^{t,\mu}(\sigma)+H_N^{\mathrm{gauss}}(\sigma,\lambda_0)+H_N^{\mathrm{exp}}(\sigma,\lambda).
\end{equation}
As usual, we denote by
\begin{equation}\label{e.multioverlap.pert.FE}
F_N(t,\mu,\lambda):=\frac{1}{N}\log\int_{\Sigma_N}\exp H_N^{t,\mu}(\sigma,\lambda)\ud P_N^*(\sigma) \quad \text{and} \quad \bar F_N(t,\mu,\lambda):=\E F_N(t,\mu,\lambda)
\end{equation}
its associated free energy functionals, and by $\langle \cdot \rangle$ its corresponding Gibbs average. This means that for any bounded and measurable function $f=f(\sigma^1,\ldots,\sigma^n)$ of finitely many replicas,
\begin{equation}\label{e.multioverlap.Gibbs.pert}
 \langle f(\sigma^1, \ldots, \sigma^n) \rangle := \langle f\rangle   :=\frac{\int_{\Sigma_N^n}f(\sigma^1,\ldots,\sigma^n)\prod_{\ell\leq n}\exp H_N^{t,\mu}(\sigma^\ell,\lambda)\ud P_N^*(\sigma^\ell)}{\big(\int_{\Sigma_N} \exp H_N^{t,\mu}(\sigma,\lambda)\ud P_N^*(\sigma)\big)^n}.
\end{equation}
Since the Gibbs measure associated with the perturbed Hamiltonian \eqref{e.multioverlap.pert.H} is a conditional expectation, it satisfies the Nishimori identity \eqref{e.SBM.Nishimori}.

\begin{lemma}\label{l.multioverlap.pert.no.effect.FE}
For all $(t,\mu)\in \Rp\times \M_+$ and perturbation parameter $\lambda$, the enriched free energy \eqref{e.intro.SBM.en.FE} and the perturbed free energy \eqref{e.multioverlap.pert.FE} are asymptotically equivalent,
\begin{equation}
\lim_{N\to +\infty}\big\lvert \overline{F}_N(t,\mu,\lambda)-\overline{F}_N(t,\mu)\big\rvert=0.
\end{equation}
\end{lemma}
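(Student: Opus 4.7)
The plan is to use the elementary bound $|\log\int_{\Sigma_N}e^{H+\Delta H}\ud P_N^*-\log\int_{\Sigma_N}e^{H}\ud P_N^*|\leq \sup_{\sigma\in\Sigma_N}|\Delta H(\sigma)|$, which follows from writing the difference as $\log\langle e^{\Delta H}\rangle_H$ for a Gibbs expectation under $H$ and noting this lies between $\inf\Delta H$ and $\sup\Delta H$. Applying this with $H=H_N^{t,\mu}$ and $\Delta H=H_N^{\mathrm{gauss}}(\cdot,\lambda_0)+H_N^{\mathrm{exp}}(\cdot,\lambda)$, then taking expectation, reduces the statement to showing
\begin{equation*}
\frac{1}{N}\E\sup_{\sigma\in\Sigma_N}\big|H_N^{\mathrm{gauss}}(\sigma,\lambda_0)+H_N^{\mathrm{exp}}(\sigma,\lambda)\big|\longrightarrow 0.
\end{equation*}

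For the Gaussian piece \eqref{e.multioverlap.gaussian.pert}, since $\lambda_0\leq 1$ and $\sigma_i\in\{-1,1\}$, the bound $|H_N^{\mathrm{gauss}}(\sigma,\lambda_0)|\leq \epsilon_NN+\sqrt{\epsilon_N}\sum_{i\leq N}|Z_{0,i}|$ holds uniformly in $\sigma$. Taking expectation and dividing by $N$ yields at most $\epsilon_N+\sqrt{\epsilon_N}\,\E|Z_{0,1}|$, which tends to zero by \eqref{e.multioverlap.epsilon.sequence}.

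For the exponential piece \eqref{e.multioverlap.exponential.pert}, the key observation is that $\lambda_k\in[2^{-k-1},2^{-k}]\subset[0,1/2]$, so the elementary estimates $|\log(1+\lambda_k\sigma_{i_{jk}})|\leq 2\lambda_k$ and $\bigl|\lambda_k e_{jk}\sigma_{i_{jk}}/(1+\lambda_k\sigma^*_{i_{jk}})\bigr|\leq 2\lambda_k e_{jk}$ hold for any $\sigma\in\Sigma_N$. Summing over $j\leq \pi_k$ and then over $k\geq 1$ gives
\begin{equation*}
\sup_{\sigma\in\Sigma_N}\big|H_N^{\mathrm{exp}}(\sigma,\lambda)\big|\leq 2\sum_{k\geq 1}\lambda_k\Big(\pi_k+\sum_{j\leq\pi_k}e_{jk}\Big).
\end{equation*}
By monotone convergence and the independence of $\pi_k$ and $(e_{jk})_{j\geq 1}$, the expectation of the right side equals $2\sum_{k\geq 1}\lambda_k(s_N+s_N)\leq 4s_N\sum_{k\geq 1}2^{-k}=4s_N$. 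Dividing by $N$ and invoking \eqref{e.multioverlap.s.sequence} finishes the argument.

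There is no real obstacle here: the proof is a routine sup-norm estimate on the perturbation Hamiltonians, and both perturbations were designed precisely so that their sup-norms grow strictly slower than $N$. The only mildly delicate point is the summability in $k$, but this is guaranteed by the geometric decay $\lambda_k\leq 2^{-k}$, which makes the exchange of supremum and infinite sum both valid and harmless.
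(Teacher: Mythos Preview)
Your proof is correct and follows essentially the same approach as the paper: bound the free-energy difference by $\frac{1}{N}\E\sup_\sigma|\Delta H|$ and estimate the Gaussian and exponential perturbation pieces separately, using $\epsilon_N\to 0$ and $s_N/N\to 0$ respectively. Your constants differ slightly (you use $|\log(1+\lambda_k\sigma)|\leq 2\lambda_k$ and $1/(1-\lambda_k)\leq 2$, while the paper keeps $\log(1+\lambda_k)$ and $\lambda_k/(1-\lambda_k)$ explicit), but the argument is the same.
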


\begin{proof}
This corresponds to \cite[Lemma 4.1]{dominguez2024mutual} but we provide a proof for completeness. A direct computation reveals that
\begin{equation*}
\big\lvert \overline{F}_N(t,\mu,\lambda)-\overline{F}_N(t,\mu)\big\rvert\leq \frac{1}{N}\E \max_{\sigma \in \Sigma_N}\big\lvert H_N^{\mathrm{gauss}}(\sigma,\lambda_0)\big\rvert+\frac{1}{N}\E \max_{\sigma \in \Sigma_N}\big\lvert H_N^{\mathrm{exp}}(\sigma,\lambda)\big\rvert.
\end{equation*}
For any spin configuration $\sigma\in \Sigma_N$,
\begin{equation*}
\big\lvert H_N^{\mathrm{gauss}}(\sigma,\lambda_0)\big\rvert\leq N\epsilon_N+\sqrt{\epsilon_N}\sum_{i\leq N}\abs{Z_{0,i}}
\end{equation*}
and
\begin{equation*}
\big\lvert H_N^{\mathrm{exp}}(\sigma,\lambda)\big\rvert\leq \sum_{1\leq k\leq K'}\sum_{j\leq \pi_k}\Big(\log(1+\lambda_k)+\frac{\lambda_ke_{jk}}{1-\lambda_k}\Big).  
\end{equation*}
Since these bounds are uniform in $\sigma$, it follows that
\begin{equation*}
\big\lvert \overline{F}_N(t,\mu,\lambda)-\overline{F}_N(t,\mu)\big\rvert\leq \epsilon_N+\sqrt{\epsilon_N}\E Z_{0,1}+\frac{s_N}{N}\sum_{k\geq 1}\Big(\log(1+\lambda_k)+\frac{\lambda_k}{1-\lambda_k}\Big).
\end{equation*}
The third term was obtained by taking the expectation with respect to the randomness of $e$, and then with respect to the randomness of $(\pi_k)_{k\geq 1}$. Leveraging \eqref{e.multioverlap.epsilon.sequence}-\eqref{e.multioverlap.s.sequence} to let $N$ tend to infinity completes the proof.
\end{proof}

\begin{lemma}\label{l.multioverlap.average.conc}
For every $n\geq 1$, the multioverlap $R_{1,\ldots,n}$ in \eqref{e.multioverlaps.def} associated with the perturbed Gibbs measure \eqref{e.multioverlap.Gibbs.pert} concentrates on average over the perturbation parameters,
\begin{equation}\label{e.multioverlap.average.conc}
\lim_{N\to+\infty}\E_\lambda\E\big\langle( R_{1,\ldots,n}-\E\langle R_{1,\ldots,n}\rangle)^2\big\rangle=0,
\end{equation}
where $\E_\lambda$ denotes the average with respect to the uniform random variables $\lambda_k$ on $[2^{-k-1},2^{-k}]$.
\end{lemma}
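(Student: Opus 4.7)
The plan is to follow the standard Panchenko-style perturbation approach, in the form carried out for Bayesian inference problems in \cite{barbier2022strong} and adapted to the sparse stochastic block model in \cite[Section~4]{dominguez2024mutual}. The argument proceeds in two stages: first concentration of the simple overlap $R_{1,2}$ via the Gaussian perturbation \eqref{e.multioverlap.gaussian.pert}, then extension to all multioverlaps $R_{1,\ldots,n}$ via the exponential perturbation \eqref{e.multioverlap.exponential.pert} and the Franz-de Sanctis identities.

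For the first stage, I would differentiate $\bar F_N(t,\mu,\lambda)$ with respect to $\lambda_0$. Gaussian integration by parts on $Z_0$ together with the Nishimori identity \eqref{e.SBM.Nishimori} reduces $\partial_{\lambda_0} \bar F_N$ to $\epsilon_N \E\langle R_{1,2}\rangle$ up to an explicit constant. Decomposing the target variance into its thermal and disorder parts,
\begin{equation*}
\E\big\langle (R_{1,2}-\E\langle R_{1,2}\rangle)^2\big\rangle=\E\big\langle (R_{1,2}-\langle R_{1,2}\rangle)^2\big\rangle+\E\big(\langle R_{1,2}\rangle-\E\langle R_{1,2}\rangle\big)^2,
\end{equation*}
the thermal piece is controlled by $\int_{1/2}^{1}\partial_{\lambda_0}^2\bar F_N\ud \lambda_0$, which telescopes to a difference of first derivatives that is uniformly bounded; combined with the $\epsilon_N$-prefactor produced by the second differentiation, this part vanishes because $\epsilon_N\to 0$. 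The disorder piece is bounded via the Gaussian Poincaré inequality applied to $F_N$ viewed as a function of $Z_0$, yielding a bound of order $1/(N\epsilon_N)$, which vanishes because $N\epsilon_N\to+\infty$ by \eqref{e.multioverlap.epsilon.sequence}. Averaging over $\lambda_0 \in [1/2,1]$ and using that $\bar F_N$ is uniformly bounded (cf.\ Lemma~\ref{l.multioverlap.pert.no.effect.FE}) then gives \eqref{e.multioverlap.average.conc} for $n=2$.

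For the second stage, I would differentiate $\bar F_N(t,\mu,\lambda)$ with respect to each $\lambda_k$ for $k\ge 1$, applying the Poisson calculus of \cite[Chapter~5]{TD_JC_book} to handle the Poisson count $\pi_k$ and using the exponential density of the variables $e_{jk}$. The combination of these two sources of randomness in \eqref{e.multioverlap.exponential.pert} is engineered precisely so that $\partial_{\lambda_k}\bar F_N$ can be written, after an integration by parts in $e_{jk}$ and an application of Nishimori, as an expectation of the form $\E\langle \Phi(\lambda_k,\sigma^1,\sigma^2)\rangle$ where $\Phi$ is a bounded rational function and $\sigma^1,\sigma^2$ are two replicas at the same random site. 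Bounding the second derivative $\partial_{\lambda_k}^2\bar F_N$ and averaging over $\lambda_k\in[2^{-k-1},2^{-k}]$ and over $k\ge 1$ against a summable weight, combined with the scaling $s_N/N \to 0$ and $s_N/\sqrt{N}\to +\infty$ in \eqref{e.multioverlap.s.sequence}, shows that the $\lambda$-averaged fluctuation of $\Phi(\lambda_k,\sigma^1,\sigma^2)$ about its Gibbs average vanishes in the limit. Expanding the rational function $\Phi$ as a geometric series in $\lambda_k$ then produces the Franz-de Sanctis identities, which express the $(n+1)$-multioverlap $R_{1,\ldots,n+1}$ in terms of products of lower-order multioverlaps. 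A standard induction on $n$, seeded by the overlap concentration from the first stage, propagates concentration to every $R_{1,\ldots,n}$.

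The main obstacle is the bookkeeping: one must verify that the error terms produced at each differentiation step remain compatible with the geometric weighting $\lambda_k\in[2^{-k-1},2^{-k}]$ so that the total expectation $\E_\lambda$ over the full perturbation parameter is meaningful, and that the scalings \eqref{e.multioverlap.epsilon.sequence}-\eqref{e.multioverlap.s.sequence} balance the thermal and disorder contributions simultaneously. This delicate check is carried out in \cite[Section~4]{dominguez2024mutual} in a setup only cosmetically different from the present one. Since the additional Hamiltonian $H_N^{t,\mu}$ is independent of and additive with the perturbation Hamiltonians, none of the derivative computations is affected by its presence, and the argument of \cite{barbier2022strong, dominguez2024mutual} applies essentially verbatim.
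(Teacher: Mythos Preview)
Your proposal correctly identifies the mechanism underlying the result, but the paper's proof is far more economical: it simply introduces the concentration function
\[
v_N:=\frac{1}{N}\sup_{\lambda}\E\big\lvert F_N(t,\mu,\lambda)-\bar F_N(t,\mu,\lambda)\big\rvert^2,
\]
observes that $v_N$ is bounded uniformly in $N$ by \cite[Proposition~B.4]{dominguez2024mutual}, and then invokes \cite[Theorem~2.2]{barbier2022strong} as a black box. What you sketch is essentially the content of that theorem, so in spirit you are on the same route as the paper, just one level deeper in the citation chain.

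There is, however, one imprecise step in your sketch that is worth flagging. You write that the disorder piece $\E\big(\langle R_{1,2}\rangle-\E\langle R_{1,2}\rangle\big)^2$ is ``bounded via the Gaussian Poincar\'e inequality applied to $F_N$ viewed as a function of $Z_0$''. This cannot work as stated: the Gibbs measure $\langle\cdot\rangle$ depends on many non-Gaussian sources of randomness---the random graph edges $G^k_{i,j}$, the Poisson counts, the random indices, the exponential variables $e_{jk}$---and Gaussian Poincar\'e in $Z_0$ only controls the conditional variance given all of those. The correct argument bounds the disorder fluctuation of $\partial_{\lambda_0}F_N$ by combining the convexity of $\lambda_0\mapsto F_N$ with concentration of $F_N$ about $\bar F_N$ over \emph{all} the randomness; this is exactly the role of the quantity $v_N$ that the paper verifies. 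Your closing remark that the ``delicate check is carried out in \cite[Section~4]{dominguez2024mutual}'' implicitly covers this, but the specific mechanism you name for the disorder piece is not the one that actually works here.
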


\begin{proof}
We introduce the concentration function
\begin{equation}
v_N:=\frac{1}{N}\sup\Big\{\E\big\lvert F_N(t,\mu,\lambda)-\bar F_N(t,\mu,\lambda)\big\rvert^2\mid \lambda_k\in [2^{-k-1},2^{-k}] \text{ for all } k\geq 0\Big\},
\end{equation}
and think of the perturbation parameters $\lambda:=(\lambda_k)_{k\geq 0}$ as being sampled independently with $\lambda_k$ uniform on $[2^{-k-1}, 2^{-k}]$ for $k\geq 0$. The concentration function $\nu_N$ is bounded from above by a constant independent of $N$ by \cite[Proposition B.4]{dominguez2024mutual}, so \eqref{e.multioverlap.average.conc} follows from \cite[Theorem 2.2]{barbier2022strong}. This completes the proof.
\end{proof}

Arguing as in \cite[Lemma 3.3]{PanSKB}, it is now possible to find a deterministic sequence $(\lambda^N)_{N\geq 1}$ along which, for all $n\geq 1$,
\begin{equation}\label{e.multioverlap.conc}
\lim_{N\to +\infty}\E\big\langle( R_{1,\ldots,n}-\E\langle R_{1,\ldots,n}\rangle_N)^2\big\rangle_N=0.
\end{equation}
Here, the Gibbs average $\langle \cdot \rangle_N$ for a system of size $N$ is associated with the perturbation parameter $\lambda^N$, and the multioverlap $R_{1,\ldots,n}$ is for i.i.d.\@ replicas $(\sigma_i^\ell)_{i,\ell\geq 1}$ sampled from this perturbed Gibbs measure. The multioverlap concentration \eqref{e.multioverlap.conc} ensures that the asymptotic spin array is generated by some probability measure $\nu \in \M_p$ according to \eqref{gen.spin.array.1}-\eqref{gen.spin.array.2}.

\begin{proposition}\label{p.multioverlap.asymptotic.spin}
Let $(\lambda^N)_{N\geq 1}$ be a deterministic sequence along which \eqref{e.multioverlap.conc} holds for all $n\geq 1$, and denote by $(\sigma_i^\ell)_{i,\ell\geq 1}$ i.i.d.\@ replicas sampled from the Gibbs measure $\langle\cdot\rangle_N$ in \eqref{e.multioverlap.Gibbs.pert} associated with the perturbation parameter $\lambda^N$. For any subsequence $(N_k)_{k\geq 1}$ along which the distribution of the spin array $(\sigma_i^\ell)_{i,\ell\geq 1}$ converges in the sense of finite-dimensional distributions, there exists a probability measure $\nu\in\M_p$ (which may depend on the subsequence) such that the asymptotic spin array is generated by $\nu$ according to \eqref{gen.spin.array.1}-\eqref{gen.spin.array.2}.
\end{proposition}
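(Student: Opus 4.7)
The strategy is to define a candidate measure $\nu \in \M_p$ as the law whose moments are the subsequential limits of the moments of the site magnetization $x_1 := \langle \sigma_1\rangle_N$, and then to verify, via the multi-overlap concentration \eqref{e.multioverlap.conc}, the Nishimori identity \eqref{e.SBM.Nishimori}, and the site-exchangeability of $\E\langle \cdot\rangle_N$, that the joint moments of the spin array under $\E\langle \cdot\rangle_N$ converge to the joint moments of the array produced by \ref{gen.spin.array.1}--\ref{gen.spin.array.2}. Since the spins take values in $\{-1,+1\}$, convergence of joint moments implies convergence of finite-dimensional distributions, which is the statement of the proposition.

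Concretely, along the given subsequence $(N_k)$, I would set $m_n := \lim_k \E\langle \sigma_1^1 \cdots \sigma_1^n\rangle_{N_k}$. Independence of replicas under $\langle \cdot\rangle$ gives $\langle \sigma_1^1 \cdots \sigma_1^n\rangle = x_1^n$, so the $m_n$ are limits of moments of the $[-1,1]$-valued random variable $x_1$ and are therefore the moments of some $\nu \in \Pr[-1,1]$. The Nishimori identity yields $m_1 = \E \sigma_1^* = \bar m$, so $\nu \in \M_p$. To match the generation procedure it then suffices to prove that, for any finite collection of distinct pairs $\{(i_k, \ell_k)\}_{k \leq K}$,
\begin{equation*}
\lim_{N \to \infty}\E\Big\langle \prod_k \sigma_{i_k}^{\ell_k}\Big\rangle_N = \prod_i m_{k_i}, \qquad k_i := |\{k : i_k = i\}|,
\end{equation*}
because conditioning on i.i.d.\ $(x_i) \sim \nu$ and applying conditional independence of the $\sigma_i^\ell$ produces exactly the right-hand side for the array sampled by \ref{gen.spin.array.1}--\ref{gen.spin.array.2}.

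To establish this identity, I would use independence of replicas under $\langle \cdot\rangle$ to write $\langle \prod_k \sigma_{i_k}^{\ell_k}\rangle = \prod_\ell \langle \prod_{i \in B_\ell}\sigma_i\rangle$ with $B_\ell := \{i_k : \ell_k = \ell\}$, and then apply site-exchangeability of $\E\langle \cdot\rangle_N$ to symmetrize over all injections of the $n$ distinct sites into $\{1,\ldots,N\}$. Extending this symmetrization to unrestricted tuples of site indices introduces an $O(N^{n-1})/N^n = O(1/N)$ correction, after which the resulting sum factorizes (by another use of replica independence) as $\E\langle \prod_p R^{(p)}_{k_p}\rangle_N$, where $R^{(p)}_{k_p}$ is the multi-overlap over the $k_p$ replicas in which the $p$-th distinct site appears. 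The main subtlety to handle is that different $R^{(p)}_{k_p}$'s may share replica indices; however, by replica-exchangeability, each $R^{(p)}_{k_p}$ has the same law under $\E\langle \cdot\rangle_N$ as $R_{1, \ldots, k_p}$, and \eqref{e.multioverlap.conc} therefore forces $L^2$-concentration of each factor at its mean, which tends to $m_{k_p}$. Since $|R^{(p)}_{k_p}| \leq 1$, a standard bound of the form $|\prod_p a_p - \prod_p b_p| \leq \sum_p |a_p - b_p|$ for $a_p, b_p \in [-1,1]$ upgrades this to $L^1$-concentration of the product at $\prod_p m_{k_p}$, which yields the identity. The method-of-moments for $\{-1,+1\}$-valued arrays then promotes convergence of moments to convergence in finite-dimensional distribution, completing the argument.
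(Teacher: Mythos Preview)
Your argument is correct. The paper's own proof is a one-line citation of \cite[Corollary~2.3]{barbier2022strong} together with the Nishimori identity for the mean condition, so you are essentially reproducing the content of that external result in this specific $\{-1,+1\}$-valued setting. Your route is the natural direct one: identify $\nu$ via the limiting moments of the site magnetization, then use replica independence, site exchangeability, and the symmetrization-to-multioverlaps trick to reduce every joint spin moment to $\E\langle \prod_p R_{L_p}\rangle$, and finally use \eqref{e.multioverlap.conc} (transported to arbitrary replica index sets $L_p$ by replica exchangeability) together with the bounded-factor telescoping to decouple the product. This is exactly the mechanism underlying the cited corollary, and in fact the paper later carries out the forward direction of this same moment computation in Lemma~\ref{l.multioverlap.asymptotic.average}. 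The only point worth making explicit in a write-up is that $\E\langle \cdot\rangle_N$ is indeed symmetric under permutations of the site indices once one averages over the i.i.d.\ prior, the i.i.d.\ Gaussian and exponential disorder, and the uniformly random indices $(i_k,j_k)$ and $(i_{jk})$; this is true here but is being used without comment.
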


\begin{proof}
This is immediate from \cite[Corollary 2.3]{barbier2022strong}. We have implicitly used that $\E\langle R_1\rangle = \E \sigma_1^*=\m$ by the Nishimori identity to assert that $\nu\in \M_p$.
\end{proof}

The two-step process \eqref{gen.spin.array.1}-\eqref{gen.spin.array.2} to generate the asymptotic spin array from the measure $\nu \in \M_p$ allows us to write the asymptotic spin array $(\sigma_i^\ell)_{i,\ell\geq 1}$ explicitly in terms of the i.i.d.\@ random variables $(x_i)_{i\geq 1}$ sampled from $\nu$ and a family of i.i.d.\@ uniform random variables $(u_{i,\ell})_{i,\ell\geq 1}$ on $[0,1]$ also independent of $(x_i)_{i\geq 1}$,
\begin{equation}\label{e.multioverlap.asymptotic.spin}
\sigma_i^\ell:=2\1\bigg\{ u_{i,\ell}\leq \frac{1+x_i}{2}\bigg\}-1.
\end{equation}
From this representation of the asymptotic spins we can readily establish \eqref{e.multioverlap.asymptotic.average.0}.

\begin{lemma}\label{l.multioverlap.asymptotic.average}
Let $(\lambda^N)_{N\geq 1}$ be a deterministic sequence along which \eqref{e.multioverlap.conc} holds for all $n\geq 1$, denote by $(\sigma_i^\ell)_{i,\ell\geq 1}$ i.i.d.\@ replicas sampled from the Gibbs measure $\langle\cdot\rangle_N$ in \eqref{e.multioverlap.Gibbs.pert} associated with the perturbation parameter $\lambda^N$, and suppose that the distributional limit of the spin array $(\sigma_i^\ell)_{i,\ell\geq 1}$ is generated by a probability measure $\nu\in\M_p$ according to \eqref{gen.spin.array.1}-\eqref{gen.spin.array.2}. If $(x_\ell)_{\ell\geq 1}$ denote i.i.d.\@ samples from $\nu$, then for any finite set of $n$ replicas and collection $\{\mathcal{C}_\ell\}_{\ell\leq n}$ of finite indices,
\begin{equation}
\lim_{N\to +\infty}\E\prod_{\ell\leq n}\Big\langle \prod_{i\in \mathcal{C}_\ell}\sigma_i^\ell\Big\rangle_N=\prod_{\ell\leq n}\E x_\ell^{\abs{\mathcal{C}_\ell}}.
\end{equation}
\end{lemma}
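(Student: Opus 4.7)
The plan is to reduce the lemma to a direct application of the finite-dimensional distributional convergence assumed in its statement. As a first maneuver, I would exploit the product structure of the multi-replica Gibbs measure \eqref{e.multioverlap.Gibbs.pert}: since the Gibbs bracket treats distinct replica superscripts as independent samples from the single-replica Gibbs measure, the product of brackets collapses to a single bracket,
\[
\prod_{\ell \leq n} \bigl\langle \prod_{i \in \mathcal{C}_\ell} \sigma_i^\ell \bigr\rangle_N = \bigl\langle \prod_{\ell \leq n} \prod_{i \in \mathcal{C}_\ell} \sigma_i^\ell \bigr\rangle_N.
\]
After taking $\E$, the left-hand side of the target identity becomes the $\E\langle\cdot\rangle_N$-average of a bounded function --- a product of finitely many $\pm 1$-valued entries --- depending only on finitely many coordinates of the spin array $(\sigma_i^\ell)_{i,\ell\geq 1}$.

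Next, I would invoke the hypothesis that along the sequence $(\lambda^N)_{N\geq 1}$ the spin array converges in the sense of finite-dimensional distributions to an array generated by the probability measure $\nu \in \M_p$ via \ref{gen.spin.array.1}-\ref{gen.spin.array.2}, as produced by Proposition~\ref{p.multioverlap.asymptotic.spin}. Since the test function is a polynomial in a fixed finite set of uniformly bounded coordinates, weak finite-dimensional convergence upgrades to convergence of expectations, giving
\[
\lim_{N\to\infty} \E \bigl\langle \prod_{\ell} \prod_{i \in \mathcal{C}_\ell} \sigma_i^\ell \bigr\rangle_N = \E \prod_{\ell} \prod_{i \in \mathcal{C}_\ell} \sigma_i^\ell,
\]
where on the right the spin array is drawn according to the explicit coupling \eqref{e.multioverlap.asymptotic.spin}.

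The final step would be a short calculation. Conditioning on $(x_i)_{i\geq 1}$, the spins $\sigma_i^\ell$ become independent $\pm 1$-valued random variables with conditional means $\E[\sigma_i^\ell \mid (x_j)_j] = x_i$, so the conditional expectation of the product over the distinct pairs $(\ell, i)$ with $i \in \mathcal{C}_\ell$ collapses to $\prod_\ell \prod_{i \in \mathcal{C}_\ell} x_i$. Taking the outer expectation and using the independence of the $(x_i)$'s across $i$ then yields the desired factorization of the limit into moments of $\nu$ of the form $\prod_\ell \E x_\ell^{\abs{\mathcal{C}_\ell}}$.

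There is no substantive obstacle here; the heavy lifting has already been performed in Proposition~\ref{p.multioverlap.asymptotic.spin}, which pins down the rigid hierarchical asymptotic structure \ref{gen.spin.array.1}-\ref{gen.spin.array.2} and itself rests on the multioverlap concentration from \cite{barbier2022strong}, transferred to the enriched model via Lemma~\ref{l.multioverlap.average.conc}. The lemma is then essentially an algebraic unpacking of what this structure dictates for finite polynomial expectations of the spins.
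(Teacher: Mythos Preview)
Your proposal is correct and follows essentially the same approach as the paper's proof: collapse the product of brackets into a single bracket over the index set $\mathcal{C}=\{(i,\ell)\mid \ell\leq n,\ i\in\mathcal{C}_\ell\}$, pass to the limit using finite-dimensional convergence of the spin array, and then evaluate the limiting expectation via the explicit coupling \eqref{e.multioverlap.asymptotic.spin} by first averaging over the $(u_{i,\ell})$'s and then using independence of the $(x_i)$'s. The paper carries out exactly these three steps in the same order.
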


\begin{proof}
Let $\mathcal{C}:=\{(i,\ell)\mid \ell\leq n \text{ and } i\in \mathcal{C}_\ell\}$, and observe that
\begin{equation*}
\E\prod_{\ell\leq n}\Big\langle \prod_{i\in \mathcal{C}_\ell}\sigma_i^\ell\Big\rangle_N=\E\Big\langle \prod_{(i,\ell)\in \mathcal{C}}\sigma_i^\ell\Big\rangle_N.
\end{equation*}
Recalling the representation \eqref{e.multioverlap.asymptotic.spin} of an asymptotic spin in terms of a family $(u_{i,\ell})_{i,\ell\geq 1}$ of i.i.d.\@ uniform random variables on $[0,1]$, and letting $N$ tend to infinity in the previous display reveals that
\begin{equation*}
\lim_{N\to +\infty}\E\prod_{\ell\leq n}\Big\langle \prod_{i\in \mathcal{C}_\ell}\sigma_i^\ell\Big\rangle_N=\E\prod_{(i,\ell)\in \mathcal{C}}\bigg(2\1\bigg\{ u_{i,\ell}\leq \frac{1+x_i}{2}\bigg\}-1\bigg)=\E\prod_{(i,\ell)\in \mathcal{C}}x_i,
\end{equation*}
where the independence of the $(u_{i,\ell})_{i,\ell\geq 1}$ has played its part in the second equality. Leveraging the independence of the $(x_i)_{i\geq 1}$ completes the proof.
\end{proof}

This formula to compute the asymptotic moments of the spin array will be applied to a slight modification of the Hamiltonian \eqref{e.intro.SBM.en.H} in Section \ref{sec:cavity}. The arguments to justify the formula in this modified setting are identical to those just given.

\begin{remark}
We discuss briefly how to extend the results of the present paper and cover the more general case when the measure $P^*$ is arbitrary with support in $[-1,1]$ in place of $\{-1,1\}$, with the understanding that we stick with the formula \eqref{e.intro.SSBM.edge.probs} to compute the link probabilities. In this case, the description of the limit law of $(\sigma^\ell_i)_{i,\ell \ge 1}$ must naturally be modified from \eqref{gen.spin.array.1}-\eqref{gen.spin.array.2}, since these explicitly refer to $\pm 1$ random variables. Instead, we would first need to sample i.i.d.\ random probability measures $(\mu_i)_{i \ge 1}$, each with law $\Xi \in \Pr(\Pr[-1,1])$, and then conditionally on $(\mu_i)_{i \ge 1}$, to sample $\sigma_i^\ell$ according to the law $\mu_i$ independently over $i, \ell \ge 1$. One key observation though is that Lemma~\ref{l.multioverlap.asymptotic.average} remains valid as stated, provided that we understand that $\nu$ is the law of the mean value of $\mu_1$, that is,
\begin{equation*}  
\nu = \text{Law} \Big( \int x \, \ud \mu_1(x) \Big).
\end{equation*}
Moreover, the cavity computations ultimately only require us to be able to evaluate moments of the form described by Lemma~\ref{l.multioverlap.asymptotic.average}. So even in this more general setting, we do not really need to keep track of the more complex object $\Xi \in \Pr(\Pr[-1,1])$, and can proceed as in the $\pm 1$ case by focusing on this simpler measure $\nu \in \M_p$. 
\end{remark}

\section{Cavity computation}
\label{sec:cavity}

In this section we perform the cavity computation that gives rise to the functional \eqref{e.intro.HJ.functional} and leads to the critical point representation in Theorem \ref{t.intro.main}. The idea behind the cavity computation is to determine the effect of perturbing the Hamiltonian \eqref{e.multioverlap.pert.H} for the system of size $N$ by a single additional spin. To emphasize this additional spin, we write $\rho:=(\epsilon,\sigma)\in \Sigma_{N+1}$ for $\sigma\in \Sigma_N$ and $\epsilon\in \{-1,+1\}$. Similarly, we write $\rho^*:=(\epsilon^*,\sigma^*)\in \Sigma_{N+1}$ for $\sigma^*\in \Sigma_N$ and $\epsilon^*\in \Sigma_1$ to denote the ground truth sampled from $P_{N+1}^*$. Given a pair $(t,\mu)\in \Rp\times \M_+$ and a perturbation parameter $\lambda=(\lambda_k)_{k\geq 0}$, we will compare the systems with Hamiltonians $H_{N+1}^{t,\mu}(\epsilon,\sigma,\lambda)$ and $H_N^{t,\mu}(\sigma,\lambda)$. This comparison will be done at the level of the free energies, and the re-scaled difference between $\bar F_{N+1}(t,\mu,\lambda)$ and $\bar F_N(t,\mu,\lambda)$ will be expressed as the difference between the modified Gibbs averages of two quantities that we may call the ``cavity fields''. We now describe the modified Hamiltonian associated with this modified Gibbs average as well as the two cavity fields.

For each $t\geq 0$, we introduce a random variable $\smash{\Pi_t\sim \Poi t\tbinom{N}{2}}$ as well as an independent family of i.i.d.\@ random matrices $\smash{(\td G^k)_{k \geq 1}}$ each having conditionally independent entries $\smash{(\td G_{i,j}^k)_{i,j\leq N}}$ taking values in $\{0,1\}$ with conditional distribution
\begin{equation}
\P\big\{\td G_{i,j}^k=1 \mid \sigma^*\big\}:=\frac{c+\Delta \sigma_{i}^*\sigma_{j}^*}{N+1}.
\end{equation}
Given a collection of random indices $\smash{(i_k,j_k)_{k\geq 1}}$ sampled uniformly at random from $\smash{\{1,\ldots,N\}^2}$, independently of the other random variables, we define the modified time-dependent Hamiltonian $\mathcal{H}_N^t$ on $\Sigma_N$ by
\begin{equation}\label{e.cavity.H.t}
\mathcal{H}_N^t(\sigma):=\sum_{k\leq \Pi_t}\log\bigg[\big(c+\Delta \sigma_{i_k}\sigma_{j_k}\big)^{\widetilde{G}_{i_k,j_k}^k}\Big(1-\frac{c+\Delta \sigma_{i_k}\sigma_{j_k}}{N+1}\Big)^{1-\widetilde{G}^k_{i_k,j_k}}\bigg].
\end{equation}
Similarly, for each $\mu \in \M_+$, consider a sequence $(\Lambda_i((N+1)\mu))_{i\leq N}$ of independent Poisson point processes with mean measure $(N+1)\mu$, and define the modified measure-dependent Hamiltonian $\mathcal{H}_N^\mu$ on $\Sigma_N$ by
\begin{equation}\label{e.cavity.H.mu}
\mathcal{H}_N^\mu(\sigma):=\sum_{i\leq N}\sum_{x\in \Lambda_i((N+1)\mu)}\log \bigg[\big(c+\Delta \sigma_{i}x\big)^{\td G^x_{i}}\Big(1-\frac{c+\Delta \sigma_{i}x}{N+1}\Big)^{1-\td G^x_{i}}\bigg],
\end{equation}
where $(\td G_i^x)_{i\leq N}$ are conditionally independent random variables taking values in $\{0,1\}$ with conditional distribution
\begin{equation}
\P\big\{\td G_i^x=1\mid \sigma^*, x\big\}:=\frac{c+\Delta \sigma_i^* x}{N+1}.
\end{equation}
For each pair $(t,\mu)\in \Rp\times \M_+$, the modified enriched Hamiltonian is the sum of the modified time-dependent and measure-dependent Hamiltonians,
\begin{equation}\label{e.cavity.en.H}
\mathcal{H}_N^{t,\mu}(\sigma):=\mathcal{H}_N^t(\sigma)+\mathcal{H}_N^\mu(\sigma),
\end{equation}
and, given a perturbation parameter $\lambda:=(\lambda_k)_{k\geq 0}$, its perturbed version is
\begin{equation}\label{e.cavity.pert.mod.H}
\mathcal{H}_N^{t,\mu}(\sigma,\lambda):=\mathcal{H}_N^{t,\mu}(\sigma)+H_N^{\mathrm{gauss}}(\sigma,\lambda_0)+H_N^{\mathrm{exp}}(\sigma,\lambda) 
\end{equation}
for the perturbation Hamiltonians \eqref{e.multioverlap.gaussian.pert} and \eqref{e.multioverlap.exponential.pert}. As usual, we write
\begin{equation}\label{e.cavity.mod.FE}
\bar F_N'(t,\mu):=\frac{1}{N}\E\log \int_{\Sigma_N}\exp \mathcal{H}_N^{t,\mu}(\sigma)\ud P_N^*(\sigma)
\end{equation}
for the modified free energy, and for each perturbation parameter $\lambda=(\lambda_k)_{k\geq 0}$, we denote by
\begin{equation}\label{e.cavity.mod.pert.FE}
\bar F_N'(t,\mu,\lambda):=\frac{1}{N}\E\log \int_{\Sigma_N}\exp \mathcal{H}_N^{t,\mu}(\sigma,\lambda) \ud P_N^*(\sigma)
\end{equation}
its perturbed version. As in \eqref{e.multioverlap.Gibbs.pert}, we write $\langle \cdot\rangle'$ for the Gibbs average with respect to the perturbed and modified Hamiltonian \eqref{e.cavity.pert.mod.H}. This means that for any bounded and measurable function $f=f(\sigma^1,\ldots,\sigma^n)$ of finitely many replicas,
\begin{equation}\label{e.cavity.gibbs}
 \langle f(\sigma^1, \ldots, \sigma^n) \rangle' := \langle f\rangle'   :=\frac{\int_{\Sigma_N^n}f(\sigma^1,\ldots,\sigma^n)\prod_{\ell\leq n}\exp \mathcal{H}_N^{t,\mu}(\sigma^\ell,\lambda)\ud P_N^*(\sigma^\ell)}{\big(\int_{\Sigma_N} \exp \mathcal{H}_N^{t,\mu}(\sigma,\lambda)\ud P_N^*(\sigma)\big)^n}.
\end{equation}
This is the modified Gibbs measure relative to which the cavity fields will be averaged. It is readily verified that this Gibbs measure is a conditional expectation, and therefore satisfies the Nishimori identity \eqref{e.SBM.Nishimori}.

The first cavity field will have a time-dependent and a measure-dependent component. For each $t\geq 0$, we introduce a random variable $\Pi_t'\sim \Poi(tN)$ as well as an independent family of i.i.d.\@ random vectors $\smash{({G'}^k)_{k\geq 1}}$ each having conditionally independent entries $\smash{({G'}^k_i)_{i\leq N}}$ taking values in $\{0,1\}$ with conditional distribution
\begin{equation}\label{e.cavity.G'.t.dist}
\P\big\{{G'}^k_i=1 \mid \epsilon^*, \sigma^*\big\}:=\frac{c+\Delta \epsilon^*\sigma_i^*}{N+1}.
\end{equation}
Given a collection of random indices $(\ell_k)_{k\geq 1}$ sampled uniformly at random from $\{1,\ldots,N\}$, independently of the other random variables, we define the time-dependent cavity field $z_N^t(\epsilon,\sigma)$ on $\Sigma_{N+1}$ by
\begin{equation}\label{e.cavity.z.t}
z_N^t(\epsilon,\sigma):=\sum_{k\leq \Pi_t'}\log\bigg[\big(c+\Delta\epsilon \sigma_{\ell_k}\big)^{{G'}^k_{\ell_k}}\Big(1-\frac{c+\Delta \epsilon \sigma_{\ell_k}}{N+1}\Big)^{1-{G'}^k_{\ell_k}}\bigg].
\end{equation}
Similarly, for each $\mu\in \M_+$, we define the measure-dependent cavity field $z_N^\mu(\epsilon)$ on $\Sigma_{1}$ by
\begin{equation}\label{e.cavity.z.mu}
z_N^\mu(\epsilon):=\sum_{x\in \Lambda_i(N\mu)}\log \bigg[\big(c+\Delta \epsilon x\big)^{{G'}^x}\Big(1-\frac{c+\Delta \epsilon x}{N+1}\Big)^{1-{G'}^x}\bigg],
\end{equation}
where $G'^x$ are conditionally independent random variables taking values in $\{0,1\}$ with conditional distribution
\begin{equation}
\P\big\{G'^x=1\mid \epsilon^*,x\big\}:=\frac{c+\Delta \epsilon^* x}{N+1}.
\end{equation}
For each pair $(t,\mu)\in \Rp\times \M_+$, the cavity field $z_N^{t,\mu}(\epsilon,\sigma)$ is the sum of the time-dependent and measure-dependent cavity fields,
\begin{equation}\label{e.cavity.z.field}
z_N^{t,\mu}(\epsilon,\sigma):=z_N^t(\epsilon,\sigma)+z_N^\mu(\epsilon).
\end{equation}
The second cavity field will only have a time-dependent component. For each $t\geq 0$, introduce an independent family of i.i.d.\@ random matrices $\smash{({G''}^k)_{k\geq 1}}$ each having conditionally independent entries $\smash{({G''}^k_{i,j})_{i,j\leq N}}$ taking values in $\{0,1\}$ with conditional distribution
\begin{equation}
\P\big\{{G''}^k_{i,j}=1 \mid \sigma^*\big\}:=\frac{c+\Delta \sigma^*_i \sigma^*_j}{N(N+1)}.
\end{equation}
Given a collection of random indices $\smash{(\ell_k,m_k)_{k\geq 1}}$ sampled uniformly at random from $\smash{\{1,\ldots,N\}^2}$, independently of the other random variables, we define the cavity field $y_N(\sigma)$ on $\Sigma_N$ by
\begin{equation}\label{e.cavity.y.field}
y_N^t(\sigma):=\sum_{k\leq \Pi_t}\log\bigg[\big(c+\Delta \sigma_{\ell_k}\sigma_{m_k}\big)^{{G''}^k_{\ell_k,m_k}}\Big(1-\frac{c+\Delta \sigma_{\ell_k}\sigma_{m_k}}{N(N+1)}\Big)^{1-{G''}^k_{\ell_k,m_k}}\bigg].
\end{equation}
The cavity computation reveals that the re-scaled difference between $\bar F_{N+1}(t,\mu,\lambda)$ and $\bar F_N(t,\mu,\lambda)$ is the difference between the cavity fields \eqref{e.cavity.z.field} and \eqref{e.cavity.y.field} averaged with respect to the Gibbs measure \eqref{e.cavity.gibbs}. More precisely, for each pair $(t,\mu)\in \Rp\times \M_+$, the asymptotic difference between these re-scaled free energies is given by
\begin{equation}\label{e.cavity.AN}
A_N(t,\mu,\lambda):=\E\log \int_{\Sigma_1}\langle \exp z_N^{t,\mu}(\epsilon,\sigma)\rangle' \ud P^*(\epsilon)-\E\log\big\langle \exp y_N^t(\sigma)\big\rangle'.
\end{equation}
To prove this, it will be convenient to introduce notation for approximate distributional identities. Given two Hamiltonians $H_N^1$ and $H_N^2$ on $\Sigma_N$, we write
\begin{equation}\label{e.cavity.approximate.distribution}
H_N^1(\sigma)\stackrel{d}{\approx} H_N^2(\sigma) \iff N \bar F_N^1=N\bar F_N^2+o_N(1),
\end{equation}
where $\bar F_N^1$ and $\bar F_N^2$ denote the free energies associated with the Hamiltonians $H_N^1$ and $H_N^2$, respectively. Oftentimes we will not prove rigorously that two Hamiltonians are approximately equal in distribution, contenting ourselves with showing that a given realization of them differs by a quantity that is stochastically of negative degree in $N$. However, all approximate distributional identities that we introduce may be justified rigorously through an interpolation argument similar to that in Lemmas \ref{l.multioverlap.pert.no.effect.FE} or \ref{l.cavity.mod.FE.equiv}.

\begin{lemma}\label{l.cavity.representation}
For all $(t,\mu)\in \Rp\times \M_+$, perturbation parameter $\lambda$, and $N\geq 1$, the re-scaled difference between the perturbed free energy \eqref{e.multioverlap.pert.FE} for a system of size $N+1$ and a system of size $N$ is given by
\begin{equation}\label{e.cavity.eqn}
(N+1)\bar F_{N+1}(t,\mu,\lambda)-N\bar F_N(t,\mu,\lambda)=A_N(t,\mu,\lambda)+o_N(1),
\end{equation}
where the error term is uniform over the perturbation parameter $\lambda=(\lambda_k)_{k\geq 1}$ with $\lambda_k\in [2^{-k-1},2^{-k}]$ for $k\geq 0$.
\end{lemma}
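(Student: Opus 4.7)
The plan is to carry out the standard cavity manipulation, which rests on two approximate distributional identities in the sense of \eqref{e.cavity.approximate.distribution}:
\begin{align*}
H_{N+1}^{t,\mu}(\rho,\lambda) &\stackrel{d}{\approx} \mathcal{H}_N^{t,\mu}(\sigma,\lambda) + z_N^{t,\mu}(\epsilon,\sigma), \\
H_N^{t,\mu}(\sigma,\lambda) &\stackrel{d}{\approx} \mathcal{H}_N^{t,\mu}(\sigma,\lambda) + y_N^t(\sigma).
\end{align*}
Granted these, since $\mathcal{H}_N^{t,\mu}(\sigma,\lambda)$ does not depend on the cavity spin $\epsilon=\rho_{N+1}$, factoring it out of the Gibbs integral on $\Sigma_{N+1}$ and using that $\langle \cdot \rangle'$ acts only on $\sigma$ yields
\begin{align*}
(N+1)\bar F_{N+1}(t,\mu,\lambda) &= N\bar F_N'(t,\mu,\lambda) + \E\log\int_{\Sigma_1}\langle \exp z_N^{t,\mu}(\epsilon,\sigma)\rangle'\ud P^*(\epsilon)+o(1),\\
N\bar F_N(t,\mu,\lambda) &= N\bar F_N'(t,\mu,\lambda) + \E\log\langle \exp y_N^t(\sigma)\rangle'+o(1),
\end{align*}
and subtracting produces the identity $(N+1)\bar F_{N+1} - N\bar F_N = A_N + o(1)$.

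For the first identity, I would split $H_{N+1}^{t,\mu}$ according to whether the cavity spin $\epsilon$ is involved. For the time-dependent part, Poisson thinning decomposes the $\Poi(t\binom{N+1}{2})$ process on $\{1,\ldots,N+1\}^2$ into pairs lying in $\{1,\ldots,N\}^2$ and pairs touching $N+1$; the resulting Poisson rates differ from those of $\mathcal{H}_N^t$ and $z_N^t$ only by $O(1)$. The measure-dependent part splits exactly into the $i\le N$ summand, which is $\mathcal{H}_N^\mu$, and the $i=N+1$ summand whose rate $(N+1)\mu$ exceeds that of $z_N^\mu$ by $\mu$. In all cases a typical term has $G=0$ with probability $1-O(1/N)$ and then contributes $\log(1-(c+\Delta\cdot)/(N+1)) = O(1/N)$ to the Hamiltonian, so an $O(1)$ rate mismatch creates only $o(1)$ free-energy discrepancy. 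The $i=N+1$ pieces of the Gaussian and exponential perturbations are $o(1)$ because $\epsilon_N\to 0$ and $s_N/N\to 0$ by \eqref{e.multioverlap.epsilon.sequence}--\eqref{e.multioverlap.s.sequence}, and the replacement of $\epsilon_{N+1}$ by $\epsilon_N$ in the retained perturbation terms is handled exactly as in Lemma~\ref{l.multioverlap.pert.no.effect.FE}.

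For the second identity, $H_N^t$ and $\mathcal{H}_N^t$ share the same Poisson rate $t\binom{N}{2}$ and index pairs, differing only in their denominator $N$ versus $N+1$. I would couple them through a single uniform $U_k\in[0,1]$ per pair by setting $G_k = \mathbf{1}(U_k\le(c+\Delta\sigma_{i_k}^*\sigma_{j_k}^*)/N)$ and $\tilde G_k = \mathbf{1}(U_k\le(c+\Delta\sigma_{i_k}^*\sigma_{j_k}^*)/(N+1))$, and verify that the joint law of $(G_k,\tilde G_k)\in\{(0,0),(1,0),(1,1)\}$ matches, term by term, the law of one summand of $y_N^t$; the Bernoulli parameter $(c+\Delta\sigma_{i_k}^*\sigma_{j_k}^*)/(N(N+1))$ of $G''$ in $y_N^t$ is precisely the conditional probability that $(G_k,\tilde G_k)=(1,0)$. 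The residual per-pair discrepancy reduces to the Taylor identity $\log(1-b/N)-\log(1-b/(N+1)) = \log(1-b/(N(N+1)))+O(1/N^3)$, whose aggregated error over $\Poi(t\binom{N}{2})$ pairs is $o(1)$. The measure part needs no cavity correction because a Campbell-formula calculation, expanded to second order in $1/N$, produces $\E\mathcal{H}_N^\mu - \E H_N^\mu = O(1/N)$; combined with the variance bound $v_N=O(1)$ used in Lemma~\ref{l.multioverlap.average.conc}, this yields the asymptotic equivalence of the two measure free energies.

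The main obstacle is the bookkeeping in the measure side of the second identity: $\mathcal{H}_N^\mu$ differs from $H_N^\mu$ both in its intensity (rate $(N+1)\mu$ vs.\ $N\mu$) and in its denominator ($N+1$ vs.\ $N$), with each effect separately contributing $\Theta(1)$ to $\E(\mathcal{H}_N^\mu - H_N^\mu)$. Only the second-order $1/N$ expansion in Campbell's formula reveals that these $\Theta(1)$ shifts cancel exactly and leave the $O(1/N)$ residue recorded above. Uniformity of the $o(1)$ error in $\lambda$ with $\lambda_k\in[2^{-k-1},2^{-k}]$ follows from the absolute convergence of the series appearing in the bounds on the exponential perturbation, exactly as in the proof of Lemma~\ref{l.multioverlap.pert.no.effect.FE}.
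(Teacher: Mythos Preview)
Your proposal follows the same two-step cavity scheme as the paper: you establish the approximate distributional identities
\[
H_{N+1}^{t,\mu}(\rho,\lambda)\stackrel{d}{\approx}\mathcal{H}_N^{t,\mu}(\sigma,\lambda)+z_N^{t,\mu}(\epsilon,\sigma)
\qquad\text{and}\qquad
H_N^{t,\mu}(\sigma,\lambda)\stackrel{d}{\approx}\mathcal{H}_N^{t,\mu}(\sigma,\lambda)+y_N^t(\sigma),
\]
and subtract. Your explicit coupling of $G_k$ and $\tilde G_k$ via a common uniform $U_k$ is a concrete realization of what the paper simply calls the ``Poisson coloring theorem'' together with the identity $\tfrac{1}{N}-\tfrac{1}{N+1}=\tfrac{1}{N(N+1)}$. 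One small caveat: in the paper's definitions the indices $(\ell_k,m_k)$ and the Bernoulli variables $G''$ in $y_N^t$ are sampled \emph{independently} of those in $\mathcal{H}_N^t$, whereas your coupling produces a $G''$ correlated with $\tilde G$ and attached to the same index pair. A short decoupling argument (or the interpolation the paper invokes for all its approximate identities) is still needed to bridge the two; the paper is equally informal here.

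There is, however, one step that does not work as you have written it. For the measure part of the second identity you argue that a Campbell computation gives $\E\mathcal{H}_N^\mu-\E H_N^\mu=O(1/N)$ and then invoke the free-energy concentration bound $v_N=O(1)$ from Lemma~\ref{l.multioverlap.average.conc} to conclude that the two free energies are close. This inference is not valid: $v_N$ controls the fluctuation of a \emph{single} free energy $F_N$ about its own mean $\bar F_N$, and says nothing about the distance between the means of two \emph{different} free energies. Closeness of $\E H^1$ and $\E H^2$ does not by itself force closeness of $\E\log\int e^{H^1}$ and $\E\log\int e^{H^2}$. The paper handles this (and all its approximate identities) by interpolation, exactly as in Lemma~\ref{l.cavity.mod.FE.equiv}: one bounds $\E\langle|\mathcal{H}_N^\mu(\sigma)-H_N^\mu(\sigma)|\rangle_u$ along a path, which requires a pathwise bound on $\max_\sigma|\mathcal{H}_N^\mu(\sigma)-H_N^\mu(\sigma)|$ under a suitable coupling, not merely closeness of expectations. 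Replacing your Campbell-plus-$v_N$ sentence by this interpolation (or by a coupling with $\E\max_\sigma|\cdot|=o(1)$) fixes the argument.
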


\begin{proof}
For simplicity, we will ignore the perturbation Hamiltonians \eqref{e.multioverlap.gaussian.pert} and \eqref{e.multioverlap.exponential.pert}; however, the choices \eqref{e.multioverlap.epsilon.sequence}-\eqref{e.multioverlap.s.sequence} of the sequences $(\epsilon_N)_{N\geq 1}$ and $(s_N)_{N\geq 1}$ can be used to show that the error incurred by replacing the perturbation Hamiltonian $H_{N+1}^{\mathrm{gauss}}(\sigma,\lambda_0)+H_{N+1}^{\mathrm{exp}}(\sigma,\lambda)$ for the system of size $N+1$ by that for the system of size $N$ can be absorbed into the error term on the right side of \eqref{e.cavity.eqn}. The proof proceeds in two steps. First we relate the time-dependent Hamiltonian \eqref{e.intro.SBM.H.t} for the system of size $N+1$ to that for the system of size $N$, and then we do the same for the measure-dependent Hamiltonian \eqref{e.intro.SBM.H.mu}.
\step{1: time-dependent cavity computation}
The probability that the tuple $\{i_k,j_k\}$ in the time-dependent Hamiltonian \eqref{e.intro.SBM.H.t} contains the index $N+1$ is 
\begin{equation*}
1-\Big(\frac{N}{N+1}\Big)^2=\frac{2N+1}{(N+1)^2}=\frac{2}{N+1}-\frac{1}{(N+1)^2}.
\end{equation*}
It follows by the Poisson coloring theorem that
\begin{equation}\label{e.intro.cavity.1}
H_{N+1}^t(\rho)\stackrel{d}{\approx}\mathcal{H}_N^{t}(\sigma)+z_N^t(\epsilon,\sigma)
\end{equation}
for the modified time-dependent Hamiltonian \eqref{e.cavity.H.t} and the time-dependent cavity field \eqref{e.cavity.z.t}. We have implicitly used that
\begin{equation*}
t\binom{N+1}{2}\Big(1-\frac{2}{N+1}\Big)=t\binom{N}{2} \quad \text{and} \quad t\binom{N+1}{2}\cdot \frac{2}{N+1}=tN.
\end{equation*}
Another application of the Poisson coloring theorem reveals that
\begin{equation}\label{e.intro.cavity.2}
H_N^t(\sigma)\stackrel{d}{\approx}\mathcal{H}_N^t(\sigma)+y_N^t(\sigma),
\end{equation}
for the modified time-dependent Hamiltonian \eqref{e.cavity.H.t} and the time-dependent cavity field \eqref{e.cavity.y.field}. We have implicitly used that
\begin{equation*}
\frac{1}{N}-\frac{1}{N+1}=\frac{1}{N(N+1)}.
\end{equation*}
\step{2: measure-dependent cavity computation}
The measure-dependent Hamiltonian \eqref{e.intro.SBM.H.mu} admits the approximate distributional decomposition
\begin{equation}\label{e.intro.cavity.3}
H_{N+1}^{\mu}(\rho)\stackrel{d}{\approx}\mathcal{H}_{N}^{\mu}(\sigma) + z_N^{\mu}(\epsilon,\sigma)
\end{equation}
for the modified measure-dependent Hamiltonian \eqref{e.cavity.H.mu} and the measure-dependent cavity field \eqref{e.cavity.z.mu}. Similarly, the Poisson coloring theorem implies that
\begin{equation}\label{e.intro.cavity.4}
H_{N}^{\mu}(\sigma)\stackrel{d}{\approx}\mathcal{H}_{N}^{\mu}(\sigma).
\end{equation}
Combining the cavity computations \eqref{e.intro.cavity.1}-\eqref{e.intro.cavity.4} with the definitions of the modified enriched Hamiltonian \eqref{e.cavity.en.H}, the Gibbs average \eqref{e.cavity.gibbs}, and the cavity fields \eqref{e.cavity.z.field} and \eqref{e.cavity.y.field} completes the proof.
\end{proof}

The cavity representation in Lemma \ref{l.cavity.representation} suggests that to understand the limit of the free energy, it is helpful to understand the asymptotic behavior of the quantity $A_N$ in \eqref{e.cavity.AN}. Indeed, if both limits existed, they would coincide. Unfortunately, it will not be possible to show that the limit of $A_N$ exists in general; however, we will now argue that any subsequential limit of $A_N$ along which the multioverlaps \eqref{e.multioverlaps.def} concentrate in the sense of \eqref{e.multioverlap.conc} is given by the functional \eqref{e.intro.HJ.functional} evaluated at a measure $\nu \in \M_p$ which generates the asymptotic spin array according to \eqref{gen.spin.array.1}-\eqref{gen.spin.array.2}.

\begin{lemma}\label{l.cavity.AN.limit}
Fix $(t,\mu)\in \Rp\times \M_+$, and suppose there is a sequence $(N_k,\lambda^{N_k})_{k\geq 1}$ such that $(N_k)_{k\geq 1}$ increases to infinity and the multioverlap concentration \eqref{e.multioverlap.conc} for the Hamiltonian \eqref{e.cavity.pert.mod.H} with perturbation parameters $(\lambda^{N_k})_{k\geq 1}$ holds for every $n\geq 1$. Then there are a subsequence $(N_k',\lambda^{N_k'})_{k\geq 1}$ and a measure $\nu\in \M_p$ such that the asymptotic spin array associated with the perturbation parameters $(\lambda^{N'_k})_{k\geq 1}$ is generated by $\nu$ according to \eqref{gen.spin.array.1}-\eqref{gen.spin.array.2} and 
\begin{equation}\label{e.cavity.AN.limit}
\lim_{k\to +\infty}A_{N_k'}\big(t,\mu,\lambda^{N_k'}\big)=\Par_{t,\mu}(\nu).
\end{equation}
\end{lemma}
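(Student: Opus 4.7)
The plan is to split the analysis of $A_N(t,\mu,\lambda)$ into its two constituent pieces, analyze each using Taylor expansion in $1/N$ together with the Poisson structure, and then reduce Gibbs averages to products of moments of $\nu$ via the asymptotic spin array description provided by Lemma~\ref{l.multioverlap.asymptotic.average}.

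\medskip

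\emph{Step 1: extraction of the subsequence and the measure $\nu$.} I would first invoke Proposition~\ref{p.multioverlap.asymptotic.spin} applied to the modified Hamiltonian \eqref{e.cavity.pert.mod.H}: since the multioverlap concentration \eqref{e.multioverlap.conc} holds along $(N_k,\lambda^{N_k})$, a compactness argument on the law of the spin array $(\sigma_i^\ell)_{i,\ell\geq 1}$ (in the sense of finite-dimensional distributions) yields a further subsequence $(N_k')$ and a probability measure $\nu\in \M_p$ that generates the limit spin array via \eqref{gen.spin.array.1}-\eqref{gen.spin.array.2}. Crucially, Lemma~\ref{l.multioverlap.asymptotic.average} then holds for this subsequence: for any finite family of distinct indices and replicas, the Gibbs expectation of a product factorizes asymptotically into moments of $\nu$.

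\medskip

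\emph{Step 2: asymptotics of the first cavity term.} Using $\log(1-u/(N+1)) = -u/(N+1) + O(1/N^2)$ on every factor of $z_N^{t,\mu}(\epsilon,\sigma)$ and summing the $O(N)$ terms, the ``$G'=0$'' contribution reduces to $-(t+\mu[-1,1])c - \Delta\epsilon\,(t\bar\sigma + \int x\,\ud\mu(x))$ up to $o(1)$, where $\bar\sigma := N^{-1}\sum_i\sigma_i$. Since $\nu\in\M_p$, the two-step process \eqref{gen.spin.array.1}-\eqref{gen.spin.array.2} forces $\bar\sigma\to\m$ under $\langle\cdot\rangle'$, producing the deterministic prefactor $\exp\bigl(-(t+\mu[-1,1])c - \Delta\epsilon(t\m+\int x\,\ud\mu)\bigr)$. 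For the ``$G'=1$'' contribution, the Poisson coloring theorem shows that, conditionally on $\epsilon^*$, the positions $\ell_k$ where $G'^k_{\ell_k}=1$ form a Poisson process with rate $t(c+\Delta\epsilon^*\sigma^*_i)/(N+1)$ per index $i$, and the measure-dependent field contributes a Poisson process of points $x$ with rate $(c+\Delta\epsilon^* x)\ud\mu(x)$. Applying Lemma~\ref{l.multioverlap.asymptotic.average}, each factor $(c+\Delta\epsilon\sigma_{\ell_k})$ becomes $(c+\Delta\epsilon x_{\ell_k})$ with $x_{\ell_k}$ i.i.d.\ $\sim\nu$; after integrating out $\sigma^*_i$ given the latent $x_i$, the size-biasing factor $c+\Delta\epsilon^*\sigma^*_i$ averages to $c+\Delta\epsilon^* x_i$. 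By Campbell's theorem, the combined process on $[-1,1]$ merges into a single Poisson point process $\Pi_{\epsilon^*}(\mu+t\nu)$ with intensity $(c+\Delta\epsilon^* x)\ud(\mu+t\nu)(x)$. Taking the logarithm, integrating against $P^*(\epsilon)$, and averaging over $\epsilon^*$ recovers exactly $\psi(\mu+t\nu)$.

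\medskip

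\emph{Step 3: asymptotics of the second cavity term.} I would apply the same Taylor expansion to $y_N^t(\sigma)$. The lower-order term contributes $-\tfrac{t}{2}(c+\Delta\bar\sigma^2) \to -\tfrac{t}{2}(c+\Delta\m^2)$. For the ``$G''=1$'' main term, the Poisson intensity at position $(i,j)$ is asymptotically $\tfrac{t}{2}(c+\Delta\sigma^*_i\sigma^*_j)/(N(N+1))\cdot N^2$; after taking the Gibbs average of $\prod(c+\Delta\sigma_{\ell_k}\sigma_{m_k})$, Lemma~\ref{l.multioverlap.asymptotic.average} replaces $\langle\sigma_{\ell_k}\sigma_{m_k}\rangle'$ with $x_{\ell_k}x_{m_k}$ for distinct indices (which occur with probability tending to one), and the size-biasing by $c+\Delta\sigma^*_i\sigma^*_j$ averages to $c+\Delta x_i x_j$. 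Campbell's theorem on the resulting Poisson point process with intensity $\tfrac{t}{2}(c+\Delta x_1x_2)\ud\nu(x_1)\ud\nu(x_2)$ on $[-1,1]^2$ yields
\begin{equation*}
\E\log\langle\exp y_N^t(\sigma)\rangle' \to -\tfrac{t}{2}(c+\Delta\m^2) + \tfrac{t}{2}\!\int\!\!\int (c+\Delta x_1 x_2)\log(c+\Delta x_1 x_2)\,\ud\nu(x_1)\ud\nu(x_2).
\end{equation*}
Using $(c+\Delta z)\log(c+\Delta z) = g(z) + (c+\Delta z)$ and the definition $G_\nu(x)=\int g(xy)\ud\nu(y)$, the constant terms cancel and one is left with $\tfrac{t}{2}\int G_\nu(y)\ud\nu(y)$.

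\medskip

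\emph{Step 4: assembly.} Combining Steps 2 and 3 yields $\lim_k A_{N_k'}(t,\mu,\lambda^{N_k'}) = \psi(\mu+t\nu) - \tfrac{t}{2}\int G_\nu(y)\,\ud\nu(y) = \Par_{t,\mu}(\nu)$.

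\medskip

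The main obstacle I anticipate is the careful handling of the size-biasing of Poisson points by the signal $\sigma^*$: this biasing, combined with the fact that the cavity field only sees the replica $\sigma$ rather than $\sigma^*$, must be reconciled through the Nishimori identity and the two-step generation of the asymptotic spin array. A secondary technical point is justifying that the Taylor error terms and the Poisson collision events (where two labels $\ell_k$ coincide) contribute $o(1)$ after taking expectations, which requires a uniform-in-$N$ bound on the moments of the number of Poisson points together with the fact that the log-likelihood ratios are uniformly bounded thanks to $\abs{\Delta}<c$.
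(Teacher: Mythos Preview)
Your overall strategy matches the paper's: extract $(N_k',\nu)$ via Proposition~\ref{p.multioverlap.asymptotic.spin}, split $A_N$ into the $z$-piece and the $y$-piece, separate the $G'=0$ and $G'=1$ contributions, and identify the limits as $\psi(\mu+t\nu)$ and $\tfrac{t}{2}\int G_\nu\,\ud\nu$. Your final algebra in Steps~3--4 is correct.

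There is, however, a real gap in how you invoke Lemma~\ref{l.multioverlap.asymptotic.average}. That lemma controls expectations of \emph{fixed, finite} products of spins, whereas the quantity you must pass to the limit is $\E\log\int_{\Sigma_1}\bigl\langle\prod_{k}(c+\Delta\epsilon\sigma_{\ell_k})^{G'^k_{\ell_k}}\cdots\bigr\rangle'\ud P^*(\epsilon)$: a \emph{Poisson} number of spin factors inside a Gibbs average, then a logarithm, then an outer expectation. None of these operations commutes with finite-dimensional convergence of the spin array. The paper closes this gap by (i) conditioning on the number $r$ of indices with $G'=1$ and writing the Bernoulli weights explicitly, (ii) truncating $r\le M$ using the rapid decay of the Poisson--binomial weights (with error uniform in $N$), (iii) replacing the logarithm by a polynomial via Weierstrass approximation, and only then (iv) applying Lemma~\ref{l.multioverlap.asymptotic.average} to the resulting finite linear combination of finite spin products; afterwards one undoes (i)--(iii). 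Your Campbell-theorem shortcut skips this reduction entirely, and without it the passage to the limit is not justified.

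A related point: the phrase ``integrating out $\sigma^*_i$ given the latent $x_i$'' is not the correct mechanism, since $\sigma^*$ is the ground truth and is not generated from the $x_i$. What the paper does (and what you correctly flag as an obstacle at the end) is to apply the Nishimori identity \emph{first}, replacing the $\sigma^*$ in the conditional law of $G'^k$ by a fresh replica $\sigma^2$. Only then do both $\sigma^1$ and $\sigma^2$ fall under the spin-array limit of Lemma~\ref{l.multioverlap.asymptotic.average}, which is what produces the intensity $(c+\Delta\epsilon^* x)\,\ud(\mu+t\nu)(x)$. This step must be carried out at the very start of Step~2, before any truncation or limiting argument, not deferred to the end.
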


\begin{proof}
Using the Prokhorov theorem, we can find a subsequence $(N_k')_{k\geq 1}$ along which the distribution of the spin array $(\sigma_i^\ell)_{i,\ell\geq 1}$ under the Gibbs measure $\E \langle \cdot \rangle'$ converges in the sense of finite-dimensional distributions. For simplicity of notation, we will denote this subsequence $N_k'$ simply by~$N$. By Proposition \ref{p.multioverlap.asymptotic.spin}, there exists a probability measure $\nu \in \M_p$ such that the asymptotic spin array is generated by $\nu$ according to \eqref{gen.spin.array.1}-\eqref{gen.spin.array.2}. The rest of the proof is devoted to establishing \eqref{e.cavity.AN.limit} and proceeds in two steps. Each step determines the asymptotic behavior of one of the quantities
\begin{equation*}
A_N^1(t,\mu,\lambda^{N}):=\E\log\int_{\Sigma_1}\langle \exp z_N^{t,\mu}(\epsilon,\sigma)\rangle'\ud P^*(\epsilon) \quad \text{and} \quad A_N^2(t,\mu,\lambda^{N}):=\E\log \langle \exp y_N^t(\sigma)\rangle'
\end{equation*}
which make up $A_N(t,\mu,\lambda^{N})$. The main difficulty will be that there are two limits to account for: the convergence of the spin array $(\sigma_i^\ell)_{i,\ell\geq 1}$ and the convergence of the Poisson sums. Nonetheless, it will be possible to show that
\begin{equation}\label{e.cavity.AN.limit.goal}
\lim_{N\to+\infty}A_N^1(t,\mu,\lambda^{N})=\psi(\mu+t\nu) \quad \text{and} \quad \lim_{N\to+\infty}A_N^2(t,\mu,\lambda^{N})=\frac{t}{2}\int_{-1}^1 G_\nu(y)\ud \nu(y)
\end{equation}
for the initial condition $\psi:\M_+\to \R$ in \eqref{e.SBM.IC} and the function $G_\nu:[-1,1]\to \R$ in \eqref{e.intro.SBM.G.mu}. To alleviate notation, the dependence of $A_N^1$, $A_N^2$, $z_N$ and $y_N$ on $t$, $\mu$ and $\lambda^N$ will be kept implicit, and we will write $s:=\mu[-1,1]$.
\step{1: limit of $A_N^1$}
The modified Hamiltonian \eqref{e.cavity.pert.mod.H} corresponds to a problem in statistical inference, so the Nishimori identity \eqref{e.SBM.Nishimori} implies that
\begin{equation*}
A_N^1=\E\log\int_{\Sigma_1}\langle \exp z_N(\epsilon,\sigma^1)\rangle'\ud P^*(\epsilon)
\end{equation*}
for the cavity field
\begin{equation*}
z_N(\epsilon,\sigma^1):=\sum_{k\leq \Pi_t'}\log\bigg[\big(c+\Delta\epsilon \sigma_{\ell_k}^1\big)^{{G'}^k_{\ell_k}}\Big(1-\frac{c+\Delta \epsilon \sigma_{\ell_k}^1}{N+1}\Big)^{1-{G'}^k_{\ell_k}}\bigg]+z_N^\mu(\epsilon),
\end{equation*}
where, through a slight abuse of notation, the family of i.i.d.\@ random matrices $\smash{({G'}^k)_{k\geq 1}}$ each has conditionally independent entries $\smash{({G'}^k_i)_{i\leq N}}$ taking values in $\{0,1\}$ with conditional distribution
\begin{equation*}
\P\big\{{G'}^k_i=1 \mid \epsilon^*, \sigma^2\big\}:=\frac{c+\Delta \epsilon^*\sigma_i^2}{N+1}.
\end{equation*}
The Poisson coloring theorem implies that the number of terms in the first sum defining $z_N(\epsilon,\sigma^1)$ that have $\smash{{G'}^k_{\ell_k}}$ equal to zero is Poisson with mean
\begin{equation*}
tN\cdot \bigg(1-\frac{c+\Delta \epsilon^*\sigma_{\ell_k}^2}{N+1}\bigg)=tN +\BigO(1).
\end{equation*}
Together with a Taylor expansion of the logarithm and the law of large numbers, this implies that the contribution of the terms in the first term defining $z_N(\epsilon,\sigma^1)$ that have $\smash{{G'}^k_{\ell_k}}$ equal to zero is
\begin{equation*}
-\sum_{k\leq tN} \frac{c+\Delta \epsilon \sigma^1_{\ell_k}}{N}+\BigO\big(N^{-1}\big)=-ct-\Delta t\epsilon \m +\BigO\big(N^{-1}\big).
\end{equation*}
A similar argument shows that the contribution of the terms in $z_N^\mu(\epsilon)$ that have $\smash{{G'}^x}$ equal to zero is $-cs - \Delta s \epsilon \E x_1+\BigO(N^{-1})$. If we define
\begin{equation*}
z_N'(\epsilon,\sigma^1):=\sum_{k\leq \Pi_t'}\log\big(c+\Delta\epsilon \sigma_{\ell_k}^1\big)^{{G'}^k_{\ell_k}}+\sum_{k\leq \Pi'_{s}}\log\big(c+\Delta\epsilon x_k\big)^{{G'}_{k}^x},
\end{equation*}
where $(x_k)_{k\geq 1}$ are i.i.d.\@ random variables with law $\mu$ and, through a slight abuse of notation, the random variables $\smash{( {G'}^x_k)_{k\geq 1}}$ are conditionally independent with conditional distribution
\begin{equation*}
\P\big\{{G'}_k^x=1\mid \epsilon^*,x_k\big\}:=\frac{c+\Delta \epsilon^* x_k}{N+1},
\end{equation*}
then this implies that
\begin{equation}\label{e.cavity.AN.1}
A_N^1=\E\log\int_{\Sigma_1}\exp(-\Delta\epsilon(t\m+s\E x_1))\langle \exp z_N'(\epsilon,\sigma^1)\rangle'\ud P^*(\epsilon)-c(t+s)+\BigO\big(N^{-1}\big).
\end{equation}
We now fix $\delta>0$ and focus on the first term in this expression which we denote by $A_N^{1,1}$. Averaging with respect to the randomness of $\smash{({G'}^k_{\ell_k})_{k\geq 1}}$ and $\smash{({G'}^x_k)_{k\geq 1}}$, and using symmetry between sites reveals that $\smash{A_N^{1,1}}$ is given by averaging the sum over $r\leq \Pi_t'$ and $q\leq \Pi_s'$ of 
\begin{align*}
\binom{\Pi_t'}{r}\binom{\Pi_s'}{q}\prod_{i=1}^r &\frac{c+\Delta \epsilon^*\sigma_i^2}{N+1}\prod_{i=r+1}^{\Pi_t'}\bigg(1-\frac{c+\Delta \epsilon^*\sigma_i^2}{N+1}\bigg) \prod_{j=1}^q \frac{c+\Delta \epsilon^*x_j}{N+1}\prod_{j=q+1}^{\Pi_s'}\bigg(1-\frac{c+\Delta \epsilon^*x_j}{N+1}\bigg)\\
&\qquad\log\bigg\langle \int_{\Sigma_1} \exp(-\Delta \epsilon(t\m+s\E x_1)) \prod_{i=1}^r (c+\Delta \epsilon \sigma^1_i)\prod_{j=1}^q (c+\Delta \epsilon x_j)\ud P^*(\epsilon)\bigg\rangle'
\end{align*}
with respect to the randomness of $\sigma^2$, $(x_k)_{k\geq 1}$, $\Pi_t'$, $\Pi_s'$, and $\epsilon^*$. Since the expression in the second line of this display grows at most linearly in $r$ and $q$, the concentration of $\Pi_t'$ and $\Pi_s'$ about $tN$ and $sN$, respectively, and the rapid decay of the binomial coefficients allow us to truncate both sums at a large enough integer $M$ by incurring an error of at most $\delta$ uniformly over $N$. Together with the approximations
\begin{align*}
\prod_{i=r+1}^{\Pi_t'}\bigg(1-\frac{c+\Delta \epsilon^* \sigma_i^1}{N+1}\bigg)&=\exp(-tc-\Delta t\epsilon^* \m) +o_N(1),\\
\prod_{j=q+1}^{\Pi_s'}\bigg(1-\frac{c+\Delta \epsilon^* x_j}{N+1}\bigg)&=\exp(-sc-\Delta s\epsilon^* \E x_1)+o_N(1),
\end{align*}
this implies that, up to an error vanishing in $N$ and uniform in $\delta$, the quantity $A_N^{1,1}$ is given by
\begin{align*}
\E\sum_{r\leq \Pi_t'\wedge M} \sum_{q\leq \Pi_s'\wedge M}\binom{\Pi_t'}{r}&\binom{\Pi_s'}{q}\frac{e^{-c(t+s)-\Delta \epsilon^*(t\m+s\E x_1)}}{(N+1)^{r+q}}\bigg\langle\prod_{i=1}^r (c+\Delta \epsilon^*\sigma_i^2)\prod_{j=1}^q (c+\Delta \epsilon^*x_j)\\
& \log\bigg\langle \int_{\Sigma_1} e^{-\Delta \epsilon(t\m+s\E x_1)} \prod_{i=1}^r (c+\Delta \epsilon \sigma^1_i)\prod_{j=1}^q (c+\Delta \epsilon x_j)\ud P^*(\epsilon)\bigg\rangle'\bigg\rangle'.
\end{align*}
Up to incurring a further error vanishing in $N$ and uniform in $\delta$, the Weierstrass approximation theorem can be used to replace the logarithm by a polynomial. This results in a linear combination of finitely many terms that are functions of finitely many spins at a time. Invoking Lemma \ref{l.multioverlap.asymptotic.average} to replace each of the averaged spins in this expression by a sample from the measure $\nu$, tracing back the manipulations we have done to $A_N^{1,1}$, and letting $\delta$ tend to zero reveals that
\begin{equation*}
A_N^{1,1}=\E\log\int_{\Sigma_1}\exp(-\Delta \epsilon(t\m+s\E x_1)) \exp z_N'(\epsilon)\ud P^*(\epsilon)+o_N(1),
\end{equation*}
for the cavity field
\begin{equation}\label{e.cavity.simplified.z.field}
z_N'(\epsilon):=\sum_{k\leq \Pi_t'}\log(c+\Delta \epsilon y_k)^{{G'}^y_k}+\sum_{k\leq \Pi_s'}\log(c+\Delta \epsilon x_k)^{{G'}^x_k},
\end{equation}
where $(y_k)_{k\geq 1}$ are independent samples from the measure $\nu\in \M_p$ which generates the asymptotic spin array according to \eqref{gen.spin.array.1}-\eqref{gen.spin.array.2}. In other words, it is possible to take the limit of the spin array while leaving the finite-volume Poisson sums unaffected. To simplify this expression further, suppose that the probability measures $\bar \mu$ and $\nu$ are discrete,
\begin{equation*}
\bar \mu:=\sum_{\ell\leq K} p_\ell \delta_{a_\ell} \quad \text{and} \quad \nu:=\sum_{\ell\leq K}q_\ell\delta_{a_\ell}
\end{equation*}
for some integer $K\geq1$, some weights $p_\ell,q_\ell \in [0,1]$, and some masses $a_\ell \in [-1,1]$. For each $\ell\leq K$, introduce the index sets
\begin{equation*}
\mathcal{I}_t^\ell:=\{k\leq \Pi_t'\mid {G'}_k^y=1 \text{ and } y_k=a_\ell\} \quad \text{and} \quad \mathcal{I}_s^\ell:=\{k\leq \Pi_s'\mid {G'}_k^x=1 \text{ and } x_k=a_\ell\}
\end{equation*}
in such a way that
\begin{align*}
A_N^{1,1}&=\E\log \int_{\Sigma_1}\exp(-\Delta \epsilon(t\m+s\E x_1)) \prod_{k\leq \Pi_t'}(c+\Delta \epsilon y_k)^{{G'}_k^y}\prod_{k\leq \Pi_s'} (c+\Delta \epsilon x_k)^{{G'}_k^x}\ud P^*(\epsilon)+o_N(1)\\
&=\E\log \int_{\Sigma_1}\exp(-\Delta \epsilon(t\m+s\E x_1)) \prod_{\ell\leq K}\prod_{k \in I_t^\ell \cup I_s^\ell} (c+\Delta \epsilon a_\ell)\ud P^*(\epsilon)+o_N(1).
\end{align*}
The Poisson coloring theorem implies that $\lvert I_t^\ell\cup I_s^\ell\rvert$ is Poisson with mean
\begin{align*}
\E\Pi_t' \P\{{G'}_1^y=1, y_1=a_\ell\}+\E\Pi_s' \P\{{G'}_1^x=1, x_1=a_\ell\}&=sp_\ell(c+\Delta \epsilon^* a_\ell)+tq_\ell(c+\Delta \epsilon^* a_\ell)+o_N(1)\\
&=(c+\Delta \epsilon^* a_\ell)(\mu+t\nu)(a_\ell)+o_N(1).
\end{align*}
It follows that
\begin{align*}
A_N^{1,1}&=\E\log \int_{\Sigma_1}\exp(-\Delta \epsilon(t\m+s\E x_1)) \prod_{x\in \Pi_*(\mu+t\nu)}(c+\Delta \epsilon x)\ud P^*(\epsilon)+o_N(1)\\
&=\psi(\mu+t\nu)+c(t+s)+o_N(1),
\end{align*}
where $\Pi_*(\mu+t\nu)$ is a Poisson point process with mean measure $(c+\Delta \epsilon^*x)\ud (\mu+t\nu)(x)$. Extending this result to the case when $\mu$ and $\nu$ are arbitrary can be done through a continuity argument leveraging Proposition \ref{p.regularity.Lipschitz}. Combining this with \eqref{e.cavity.AN.1} establishes the first equality in \eqref{e.cavity.AN.limit.goal}.
\step{2: limit of $A_N^2$}
The modified Hamiltonian \eqref{e.cavity.pert.mod.H} corresponds to a problem in statistical inference, so the Nishimori identity \eqref{e.SBM.Nishimori} implies that
\begin{equation*}
A_N^2=\E\log\langle \exp y_N(\sigma^1)\rangle'
\end{equation*}
for the cavity field
\begin{align*}
y_N(\sigma^1):=\sum_{k\leq \Pi_t}\log\bigg[\big(c+\Delta \sigma^1_{\ell_k}\sigma^1_{m_k}\big)^{{G''}^k_{\ell_k,m_k}}\Big(1-\frac{c+\Delta \sigma^1_{\ell_k}\sigma^1_{m_k}}{N(N+1)}\Big)^{1-{G''}^k_{\ell_k,m_k}}\bigg],
\end{align*}
where, through a slight abuse of notation, the family of i.i.d.\@ random matrices $\smash{({G''}^k)_{k\geq 1}}$ each has conditionally independent entries $\smash{({G''}^k_{i,j})_{i,j \leq N}}$ taking values in $\{0,1\}$ with conditional distribution
\begin{equation*}
\P\big\{{G''}^k_{i,j}=1 \mid \sigma^2\big\}:=\frac{c+ \sigma_{\ell_k}^2\sigma^2_{m_k}}{N(N+1)}.
\end{equation*}
The Poisson coloring theorem implies that the number of terms in the sum defining $y_N(\sigma^1)$ that have $\smash{{G''}^k_{\ell_k, m_k}}$ equal to zero is Poisson with mean
\begin{equation*}
t\binom{N}{2}\cdot \bigg(1-\frac{c+\Delta \sigma_{\ell_k}^2\sigma_{m_k}^2}{N(N+1)}\bigg)=\frac{t}{2}N^2 +\BigO(1).
\end{equation*}
Together with a Taylor expansion of the logarithm and the law of large numbers, this implies that the contribution of the terms in the sum defining $y_N(\sigma^1)$ that have $\smash{{G'}^k_{\ell_k}}$ equal to zero is
\begin{equation*}
-\sum_{k\leq tN^2/2} \frac{c+\Delta \sigma^1_{\ell_k}\sigma^1_{m_k}}{N(N+1)}+\BigO\big(N^{-1}\big)=-\frac{t}{2}\big(c+\Delta \m^2\big) +\BigO\big(N^{-1}\big).
\end{equation*}
If we define
\begin{equation*}
y_N'(\sigma^1):=\sum_{k\leq \Pi_t}\log(c+\Delta \sigma^1_{\ell_k}\sigma^1_{m_k})^{{G''}^k_{\ell_k,m_k}},
\end{equation*}
then this implies that
\begin{equation}\label{e.cavity.AN.2}
A_N^2=\E\log \langle \exp y_N'(\sigma^1)\rangle'-\frac{t}{2}\big(c-\Delta \m^2\big) +\BigO\big(N^{-1}\big).
\end{equation}
We now focus on the first term in this expression which we denote by $A_N^{2,1}$. A similar argument as that in Step 1 shows that it is possible to take the limit of the spin array while leaving the finite-volume Poisson sum unaffected. More precisely, it is possible to show that
\begin{equation*}
A_N^{2,1}=\E \sum_{k\leq \Pi_t}\log(c+\Delta y_{1,k} y_{2,k})^{{G''}^y_k}+o_N(1),
\end{equation*}
where $(y_{1,k})_{k\geq 1}$ and $(y_{2,k})_{k\geq 1}$ are independent samples from the measure $\nu\in \M_p$ which generates the asymptotic spin array according to \eqref{gen.spin.array.1}-\eqref{gen.spin.array.2}, and $({G''}^y_k)_{k\geq 1}$ has conditional distribution
\begin{equation*}
\P\big\{{G''}^y_k=1 \mid y_{1,k},y_{2,k}\big\}:=\frac{c+\Delta y_{1,k}y_{2,k}}{N(N+1)}.
\end{equation*}
Averaging with respect to the randomness of $({G''}^y_k)_{k\geq 1}$, and using the independence of $(y_{1,k})_{k\geq 1}$ and $(y_{2,k})_{k\geq 1}$ reveals that
\begin{align*}
A_N^{2,1}&=\frac{1}{N(N+1)}\E \sum_{k\leq \Pi_t} (c+\Delta y_{1,k}y_{2,k})\log(c+\Delta y_{1,k}y_{2,k})+o_N(1)\\
&=\frac{t}{2}\E(c+\Delta y_1y_2)\log(c+\Delta y_1 y_2)+o_N(1).
\end{align*}
Combining this with \eqref{e.cavity.AN.2} and recalling the definition of the function $G_\nu:[-1,1]\to \R$ in \eqref{e.intro.SBM.G.mu} gives
\begin{equation*}
\lim_{N\to +\infty}A_N^2=\frac{t}{2}\Big(\E(c+\Delta y_1y_2)\log(c+\Delta y_1 y_2)-\Delta \m^2-c\Big)=\frac{t}{2}\int_{-1}^1 G_\nu(y)\ud \nu(y).
\end{equation*}
This completes the proof.
\end{proof}

This result can now be combined with the cavity representation in Lemma \ref{l.cavity.representation} to bound the asymptotic free energy from above and from below by the functional \eqref{e.intro.HJ.functional} evaluated at two possibly different measures $\nu^\pm \in \M_p$ which generate the asymptotic spin array associated with two sets of perturbation parameters according to \eqref{gen.spin.array.1}-\eqref{gen.spin.array.2}. This result will be improved in the next section.

\begin{proposition}\label{p.cavity.limit.bounds}
For all $(t,\mu)\in \Rp\times \M_+$ there are sequences $\smash{\big(N_k^\pm,\lambda^{N_k^\pm}\big)_{k\geq 1}}$ and measures $\nu^\pm\in \M_p$ such that the following holds. The sequences $\smash{(N_k^\pm)_{k\geq 1}}$ increase to infinity, the multioverlap concentration \eqref{e.multioverlap.conc} for the Hamiltonian \eqref{e.cavity.pert.mod.H} with perturbation parameters $\smash{(\lambda^{N_k^\pm})_{k\geq 1}}$ holds for $n\geq 1$, the asymptotic spin array associated with the perturbation parameters $\smash{(\lambda^{N_k^\pm})_{k\geq 1}}$ is generated by $\nu^\pm$ according to \eqref{gen.spin.array.1}-\eqref{gen.spin.array.2}, and
\begin{equation}\label{e.cavity.limit.bounds}
\Par_{t,\mu}(\nu^-)\leq \liminf_{N\to+\infty}\bar F_N(t,\mu)\leq \limsup_{N\to +\infty}\bar F_N(t,\mu)\leq \Par_{t,\mu}(\nu^+).
\end{equation}
\end{proposition}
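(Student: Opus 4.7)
The plan combines the cavity representation from Lemma~\ref{l.cavity.representation}, the averaged multioverlap concentration from Lemma~\ref{l.multioverlap.average.conc}, and the identification of subsequential cavity limits from Lemma~\ref{l.cavity.AN.limit}. The central mechanism is the Cesaro identity obtained by summing the cavity identity: since $(N+1)\bar F_{N+1}(t,\mu,\lambda) - N\bar F_N(t,\mu,\lambda) = A_N(t,\mu,\lambda) + o(1)$ holds uniformly over $\lambda$ but only telescopes cleanly when $\lambda$ is held fixed across sizes $N$ and $N+1$, I would work with a single judiciously chosen $\lambda^*$ throughout. Combining this with $\bar F_N(t,\mu,\lambda^*) = \bar F_N(t,\mu) + o(1)$ from Lemma~\ref{l.multioverlap.pert.no.effect.FE} and summing telescopes to $\bar F_N(t,\mu) = \frac{1}{N}\sum_{j=1}^{N-1} A_j(t,\mu,\lambda^*) + o(1)$, which forces $\limsup_j A_j(t,\mu,\lambda^*) \geq L^+$ and $\liminf_j A_j(t,\mu,\lambda^*) \leq L^-$, where $L^\pm := \limsup/\liminf_{N \to \infty} \bar F_N(t,\mu)$.

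Next I would extract a subsequence $M_k$ with $\bar F_{M_k}(t,\mu) \to L^+$, and, by a Fubini-type argument applied to Lemma~\ref{l.multioverlap.average.conc} (possibly after further thinning $M_k$), select a fixed $\lambda^*$ such that the density of bad indices along $M_k$ vanishes, namely $\frac{1}{M_k}\sum_{N \leq M_k} \1\{\phi_n(N,\lambda^*) > \epsilon\} \to 0$ for every $n$ and $\epsilon > 0$, where $\phi_n(N,\lambda) := \E\langle (R_{1,\ldots,n} - \E\langle R_{1,\ldots,n}\rangle)^2 \rangle$. Let $S \subseteq \N$ consist of indices where $\phi_n(N,\lambda^*)$ is small for all relevant $n$, so that $|S \cap [1,M_k]|/M_k \to 1$ along $M_k$ and multioverlap concentration at $\lambda^*$ holds along $S$ for every $n$. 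Since $A_j$ is uniformly bounded, the density-one property of $S$ along $M_k$ combined with the Cesaro identity yields $\frac{1}{M_k}\sum_{N \in S \cap [1,M_k]} A_N(t,\mu,\lambda^*) \to L^+$, from which a standard extraction produces $N_k^+ \subseteq S$ with $A_{N_k^+}(t,\mu,\lambda^*) \to a^+ \geq L^+$. Applying Lemma~\ref{l.cavity.AN.limit} with the constant choice $\lambda^{N_k^+} := \lambda^*$ then produces, after a further subsequence extraction, a probability measure $\nu^+ \in \M_p$ generating the asymptotic spin array and satisfying $\Par_{t,\mu}(\nu^+) = a^+ \geq L^+$. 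The lower bound and the measure $\nu^-$ follow by a completely symmetric argument.

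The main obstacle will be the Fubini/density step isolating the fixed $\lambda^*$: since Lemma~\ref{l.multioverlap.average.conc} only furnishes $L^1(d\lambda)$ convergence of $\phi_n(N,\lambda)$ to zero, passing to pointwise concentration at a fixed $\lambda^*$ demands a careful upgrade via subsequence extraction and diagonalization across the countably many pairs $(n,\epsilon)$, all while maintaining compatibility with the subsequence $M_k$ realizing $L^+$. The reason we cannot circumvent this by instead letting $\lambda^N$ vary is that the perturbation-to-free-energy bound from Lemma~\ref{l.multioverlap.pert.no.effect.FE} decays only polynomially in $N$, so any attempt to apply the cavity identity along a varying $\lambda^N$ would incur errors of size $O(N^{1-\alpha})$ rather than $o(1)$ when multiplied by $N+1$ in the telescoping step, and the resulting identity would be useless.
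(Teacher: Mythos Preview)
Your approach is correct and reaches the same bounds, but by a noticeably more elaborate route than the paper. The paper's proof is a few lines: it directly selects an $N$-dependent sequence $(\lambda^N)_{N\ge 1}$ via the standard averaging device (as in \cite[Lemma~3.3]{PanSKB}) so that the multioverlap concentration \eqref{e.multioverlap.conc} holds for every $n$ along the full sequence $N\to\infty$, then combines Lemma~\ref{l.multioverlap.pert.no.effect.FE} with the Ces\`aro inequality $\liminf_N \bar F_N(t,\mu)\ge \liminf_N A_N(t,\mu,\lambda^N)$ (and its mirror for $\limsup$), and finally feeds the subsequence realizing that $\liminf$ (resp.\ $\limsup$) into Lemma~\ref{l.cavity.AN.limit}. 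There is no fixed $\lambda^*$, no density-one set, and no Fubini/diagonalization in the paper's argument.

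Your last paragraph misdiagnoses the difficulty. Lemma~\ref{l.cavity.representation} already gives the cavity identity with error $o_N(1)$ \emph{uniformly in $\lambda$}, so there is no $O(N^{1-\alpha})$ blow-up coming from ``applying the cavity identity along a varying $\lambda^N$''. The genuine question is whether the unperturbed increment $a_N:=(N+1)\bar F_{N+1}(t,\mu)-N\bar F_N(t,\mu)$, which is the actual general term of the Ces\`aro sum for $\bar F_N(t,\mu)$, can be matched with $A_N(t,\mu,\lambda^N)$ up to $o(1)$; the paper's one-line invocation does not make this step explicit. Your fixed-$\lambda^*$ detour sidesteps the question cleanly, since with $\lambda$ held constant the telescoping is genuine and $\bar F_N(t,\mu)=\tfrac{1}{N}\sum_{j<N}A_j(t,\mu,\lambda^*)+o(1)$ holds on the nose. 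The density step you flag as the ``main obstacle'' is in fact routine: set $\Phi(N,\lambda):=\sum_{n\ge 1}2^{-n}\big(\phi_n(N,\lambda)\wedge 1\big)$, observe from Lemma~\ref{l.multioverlap.average.conc} and dominated convergence that $\E_\lambda \Phi(N,\lambda)\to 0$, hence along any subsequence $(M_k)$ realizing $L^+$ one has $\E_\lambda\big[\tfrac{1}{M_k}\sum_{N\le M_k}\Phi(N,\lambda)\big]\to 0$; a single Markov/subsequence extraction then produces $\lambda^*$ and a sub-subsequence of $(M_k)$ along which $\tfrac{1}{M_k}\sum_{N\le M_k}\Phi(N,\lambda^*)\to 0$, and a second Markov bound gives the density-one set $S$ on which $\Phi(N,\lambda^*)$ (hence every $\phi_n(N,\lambda^*)$) is small. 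From there your extraction of $N_k^+\in S$ with $A_{N_k^+}(t,\mu,\lambda^*)\to a^+\ge L^+$ and the application of Lemma~\ref{l.cavity.AN.limit} with the constant choice $\lambda^{N_k^+}\equiv\lambda^*$ go through. So your proof works; it trades the paper's brevity for a more explicit handling of the perturbation in the Ces\`aro step.
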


\begin{proof}
Combining Lemma \ref{l.multioverlap.average.conc} with the arguments in \cite[Lemma 3.3]{PanSKB}, it is possible to find a sequence of perturbation parameters $(\lambda^N)_{N\geq 1}$ such that the multioverlap concentration \eqref{e.multioverlap.conc} for the perturbed Hamiltonian \eqref{e.cavity.pert.mod.H} holds for all $n\geq 1$. Remembering that the perturbation parameters do not affect the asymptotic behavior of the free energy by Lemma \ref{l.multioverlap.pert.no.effect.FE} and that the $\liminf$ of a Cesàro sum is bounded from below by the $\liminf$ of its general term, we see that
\begin{equation*}
\liminf_{N\to +\infty} \bar F_N(t,\mu)\geq \liminf_{N\to +\infty}A_N(t,\mu,\lambda^N).
\end{equation*}
Invoking Lemma \ref{l.cavity.AN.limit} and passing to a further subsequence along which $(A_N(t,\mu,\lambda^N))_{N\geq 1}$ converges to its $\liminf$ if necessary, gives a sequence $\smash{(N_k^-, \lambda^{N_k^-})_{k\geq 1}}$ and a measure $\nu^-\in \M_p$ such that the asymptotic spin array associated with the perturbation parameters $\smash{(\lambda^{N_k^-})_{k\geq 1}}$ is generated by $\nu^-$ according to \eqref{gen.spin.array.1}-\eqref{gen.spin.array.2} and 
\begin{equation*}
\liminf_{N\to+\infty}A_N\big(t,\mu,\lambda^N\big)=\lim_{k\to +\infty}A_{N_k^-}\big(t,\mu,\lambda^{N_k^-}\big)=\Par_{t,\mu}\big(\nu^-\big).
\end{equation*}
Together with the previous display this gives the lower bound in \eqref{e.cavity.limit.bounds}.
The upper bound in this inequality is proved in an identical manner by instead using that the $\limsup$ of a Cesàro sum is bounded from above by the $\limsup$ of its general term. This completes the proof.
\end{proof}

We close this section by showing that the asymptotic behavior of the enriched free energy \eqref{e.intro.SBM.en.FE} and the modified enriched free energy \eqref{e.cavity.mod.FE} are equivalent. This will play its part in the next section when we show that whenever the free energy converges, we can turn \eqref{e.cavity.limit.bounds} into an equality.

\begin{lemma}\label{l.cavity.mod.FE.equiv}
For all $(t,\mu)\in \Rp\times \M_+$, the enriched free energy \eqref{e.intro.SBM.en.FE} and the modified enriched free energy \eqref{e.cavity.mod.FE} are asymptotically equivalent,
\begin{equation}\label{e.cavity.mod.FE.equiv}
\lim_{N\to +\infty}\big\lvert \bar F_N(t,\mu)-\bar F_N'(t,\mu)\big\rvert=0.
\end{equation}
\end{lemma}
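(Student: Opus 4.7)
The strategy is to construct an explicit coupling between $H_N^{t,\mu}$ and $\mathcal{H}_N^{t,\mu}$ on a common probability space, and then invoke the elementary inequality
\begin{equation*}
\big\lvert \bar F_N(t,\mu) - \bar F_N'(t,\mu)\big\rvert \leq \frac{1}{N}\, \E \max_{\sigma \in \Sigma_N} \big\lvert H_N^{t,\mu}(\sigma) - \mathcal{H}_N^{t,\mu}(\sigma)\big\rvert,
\end{equation*}
which reduces the lemma to showing that the right-hand side is $o(1)$, exactly as in the proof of Lemma~\ref{l.multioverlap.pert.no.effect.FE}.

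For the time-dependent part $H_N^t$ versus $\mathcal{H}_N^t$, I would use the same Poisson count $\Pi_t \sim \Poi\big(t\binom{N}{2}\big)$ and the same random indices $(i_k, j_k)_{k\geq 1}$ in both Hamiltonians, and couple the Bernoullis $G^k$ and $\widetilde{G}^k$ monotonically through a single $\mathrm{Unif}[0,1]$ variable so that $\widetilde{G}^k \leq G^k$. With this coupling, the $k$-th term of the difference either vanishes (when $G^k = \widetilde{G}^k = 1$), equals $\log(1-p/N) - \log(1-p/(N+1)) = O(1/N^2)$ uniformly in $\sigma$ (when $G^k = \widetilde{G}^k = 0$), or is bounded by a universal constant depending only on $c \pm |\Delta|$ (when $G^k = 1$ and $\widetilde{G}^k = 0$, which occurs with conditional probability $(c + \Delta \sigma_{i_k}^* \sigma_{j_k}^*)/(N(N+1)) = O(1/N^2)$). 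Summing the per-term bounds and using $\E\Pi_t = O(N^2)$ yields $\E \max_\sigma \lvert H_N^t(\sigma) - \mathcal{H}_N^t(\sigma) \rvert = O(1)$.

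For the measure-dependent part $H_N^\mu$ versus $\mathcal{H}_N^\mu$, I would use Poisson superposition to realize each $\Lambda_i((N+1)\mu)$ as a disjoint union $\Lambda_i(N\mu) \sqcup \Lambda_i'(\mu)$, with $\Lambda_i'(\mu)$ an independent Poisson process of intensity $\mu$. On the shared points in $\Lambda_i(N\mu)$, I couple the Bernoullis $G_i^x$ and $\widetilde{G}_i^x$ monotonically and run the same three-case analysis as above to obtain an $O(1)$ contribution over all $i \leq N$. On the extra points in $\Lambda_i'(\mu)$, whose total expected number is $N\mu[-1,1] = O(N)$, the analogous bookkeeping shows that the $O(1)$ expected number of nonzero Bernoullis each contribute $O(1)$ while the remaining $O(N)$ zero outcomes each contribute $O(1/N)$, again totalling $O(1)$. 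Combining, $\E \max_\sigma \lvert H_N^\mu(\sigma) - \mathcal{H}_N^\mu(\sigma)\rvert = O(1)$.

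Adding the two bounds gives $\E \max_\sigma \lvert H_N^{t,\mu}(\sigma) - \mathcal{H}_N^{t,\mu}(\sigma)\rvert = O(1)$, and dividing by $N$ produces the claim. No step presents a serious obstacle; the only care needed is that the per-term bounds are genuinely uniform in $\sigma$, which holds because each term depends on $\sigma$ only through the $\pm 1$-valued quantities $\sigma_{i_k}\sigma_{j_k}$ or $\sigma_i x$, so the worst case over $\sigma$ reduces to a worst case over at most four sign choices. This same coupling-plus-interpolation template is then precisely what is invoked in Section~\ref{sec:cavity} to rigorously justify the approximate distributional identities \eqref{e.intro.cavity.1}--\eqref{e.intro.cavity.4} used in the cavity decomposition.
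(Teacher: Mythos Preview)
Your coupling argument is correct and gives the desired $O(1/N)$ bound. The case analysis for both the time-dependent and the measure-dependent parts is accurate, and your observation that the per-term bounds are uniform in $\sigma$ (because each term depends on $\sigma$ only through a bounded quantity) is exactly what makes the $\max_\sigma$ pass through harmlessly.

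However, the route is genuinely different from the paper's. The paper does not couple $H_N^{t,\mu}$ and $\mathcal{H}_N^{t,\mu}$ directly. Instead it invokes the approximate distributional identity $H_N^{t,\mu}(\sigma)\stackrel{d}{\approx}\mathcal{H}_N^{t,\mu}(\sigma)+y_N^t(\sigma)$ from \eqref{e.intro.cavity.2} and \eqref{e.intro.cavity.4}, and then runs an \emph{interpolation} in a parameter $u\in[0,1]$ between $\mathcal{H}_N^{t,\mu}$ and $\mathcal{H}_N^{t,\mu}+y_N^t$, bounding $\lvert\partial_u\bar F_N(u)\rvert$ by $\frac{1}{N}\E\langle\lvert y_N^t(\sigma)\rvert\rangle_u$ and then estimating $\E\langle\lvert y_N^t(\sigma)\rvert\rangle_u=O(1)$ by averaging over the Bernoullis $G''^k$. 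So the paper's proof goes through the cavity field $y_N^t$ and the fundamental theorem of calculus, whereas yours bypasses $y_N^t$ entirely via a monotone Bernoulli coupling and the crude inequality $\lvert\bar F_N-\bar F_N'\rvert\le \frac{1}{N}\E\max_\sigma\lvert H_N^{t,\mu}-\mathcal{H}_N^{t,\mu}\rvert$. Your argument is more elementary and self-contained, since it does not lean on the $\stackrel{d}{\approx}$ identities whose own justification the paper defers; the paper's argument, on the other hand, reuses the cavity objects already introduced and is shorter once those are in place. One small terminological slip: your closing sentence refers to a ``coupling-plus-interpolation template,'' but your proof is pure coupling with no interpolation step; the interpolation is what the paper does instead.
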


\begin{proof}
Recall from \eqref{e.intro.cavity.2} and \eqref{e.intro.cavity.4} that the enriched Hamiltonian \eqref{e.intro.SBM.en.H} and the modified enriched Hamiltonian \eqref{e.cavity.en.H} differ by the cavity field $y_N^t$ in \eqref{e.cavity.y.field} in the approximate sense defined in \eqref{e.cavity.approximate.distribution},
\begin{equation}\label{e.cavity.approximate.distribution.2}
H_N^{t,\mu}(\sigma)\stackrel{d}{\approx}\mathcal{H}_N^{t,\mu}(\sigma)+y_N^t(\sigma).
\end{equation}
We now leverage an interpolation argument to establish the asymptotic equivalence \eqref{e.cavity.mod.FE.equiv}. For each $u\in [0,1]$, we define the interpolating Hamiltonian
\begin{equation*}
\mathcal{H}_N^{t,\mu}(u,\sigma):=\mathcal{H}_N^{t,\mu}(\sigma)+u y_N^t(\sigma),
\end{equation*}
and denote by $\bar F_N(u)$ and $\langle \cdot \rangle_u$ its associated free energy and Gibbs average, respectively. The fundamental theorem of calculus implies that
\begin{equation}\label{e.cavity.mod.FE.equiv.key}
\big\lvert \bar F_N(1)-\bar F_N(0)\big\rvert \leq \sup_{u\in [0,1]}\Big\lvert \frac{\ud}{\ud u}\bar F_N(u)\Big\rvert\leq \frac{1}{N}\sup_{u\in [0,1]}\E\big\langle \abs{y_N^t(\sigma)}\big\rangle_u.
\end{equation}
Averaging with respect to the randomness of $({G''}^k)_{k\geq 1}$ shows that for each $u\in [0,1]$,
\begin{equation*}
\E\big\langle \abs{y_N^t(\sigma)}\big\rangle_u\leq \E \Pi_t\bigg(\frac{(c+\abs{\Delta})(\log(c+\abs{\Delta})+1)}{N(N+1)}+\BigO\big(N^{-4}\big)\bigg)=\BigO(1).
\end{equation*}
Substituting this into \eqref{e.cavity.mod.FE.equiv.key}, recalling that $\bar F_N(1)=\bar F_N(t,\mu)+o_N(1)$ by the approximate distributional identity \eqref{e.cavity.approximate.distribution.2}, and observing that $\bar F_N(0)=\bar F_N'(t,\mu)$  completes the proof.
\end{proof}

\section{Critical point representation}
\label{sec:CP_rep}

In this section we finally prove Theorems \ref{t.intro.main} and \ref{t.intro.main.limit.array}. We begin by showing that whenever the limit free energy exists and is Gateaux differentiable, it may be identified with a measure $\nu \in \M_p$ through the function $G_\nu$ in \eqref{e.intro.SBM.G.mu}. We then show that the map $\eta\mapsto G_\eta$ is injective on $\M_p$, meaning that the identified measure $\nu\in \M_p$ is unique. We also prove that this measure generates some subsequential limit of the spin array associated with the modified and perturbed Hamiltonian~\eqref{e.cavity.pert.mod.H}.
We then argue that under these same assumptions, the measure $\nu$ is a valid choice for both the measure $\nu^+$ and the measure $\nu^-$ in Proposition \ref{p.cavity.limit.bounds}. Through arguments similar to those in Lemma~\ref{l.cavity.AN.limit}, at any point $(t,\mu)\in \Rp\times \M_+$ of differentiability of the limit free energy, we relate the Gateaux derivative of the limit free energy at $(t,\mu)$ to the Gateaux derivative of the asymptotic initial condition \eqref{e.SBM.IC} at $\mu+t\nu$. This leads to the equality $G_\nu=D_\mu\psi(\mu+t\nu,\cdot)$ which we show to be equivalent to the fixed point equation $\nu=\Gamma_{t,\mu}(\nu)$ associated with the operator \eqref{e.intro.SBM.FP.operator}. Finally, we combine all these insights with  Proposition \ref{p.regularity.differentiability} on the almost everywhere Gateaux differentiability of the limit free energy to establish Theorems \ref{t.intro.main} and \ref{t.intro.main.limit.array}.

To relate the Gateaux derivative of the limit free energy to that of the initial condition, it will be convenient to recall from \cite[Remark 3.2]{dominguez2024mutual} that for any $\mu \in \M_+$, the density of the Gateaux derivative $D_\mu \psi(\mu)$ is 
\begin{equation}\label{e.intro.IC.Gateaux.der}
D_\mu \psi(\mu,x) = \E \langle c+\Delta \epsilon x\rangle_* \log\langle c+\Delta \epsilon x\rangle_* -c-\Delta \m x,
\end{equation}
where, conditionally on the ground truth $\sigma^*$, the bracket $\langle \cdot \rangle_{*}$ denotes the Gibbs average associated with the Hamiltonian defined on $\Sigma_1$ by
\begin{equation}
H^\mu(\epsilon):=-\int_{-1}^1 (c+\Delta \epsilon x)\ud \mu(x)+ \sum_{x\in \Pi_*(\mu)} \log(c+\Delta \epsilon x).
\end{equation}
Here $\Pi_*(\mu)$ denotes a Poisson point process on $[-1,1]$ with mean measure $(c+\Delta \sigma^*x)\ud \mu(x)$. This means that for any bounded and measurable function $f=f(\epsilon^1,\ldots,\epsilon^n)$ of finitely many replicas,
\begin{equation}\label{e.intro.Gibbs.star}
\langle f(\epsilon^1,\ldots,\epsilon^n)\rangle_*:=\langle f\rangle_*:=\frac{\int_{\Sigma_1^n}f(\epsilon^1,\ldots,\epsilon^n)\prod_{\ell\leq n}\exp H^\mu(\epsilon^\ell)\ud P^*(\epsilon^\ell)}{\big(\int_{\Sigma_1} \exp H^\mu(\epsilon)\ud P^*(\epsilon)\big)^n}.
\end{equation}
It will also be convenient to remember from the proof of \cite[Lemma 2.4]{dominguez2024mutual} that the Gateaux derivative of the enriched free energy \eqref{e.intro.SBM.en.FE} at $(t,\mu)\in \Rp\times \M_+$ in the direction of a probability measure $\eta \in \Pr[-1,1]$ is given by
\begin{equation}\label{e.SBM.en.FE.der.mu.exact}
D_\mu \overline{F}_N(t,\mu;\eta)=\sum_{i\leq N}\E\log \bigg\langle \big(c+\Delta \sigma_iw_i\big)^{\tG_i^w}\Big(1-\frac{c+\Delta \sigma_iw_i}{N}\Big)^{1-\tG_i^w} \bigg\rangle,
\end{equation}
where the bracket $\langle \cdot \rangle$ denotes the Gibbs average \eqref{e.SBM.Gibbs}, the $(w_i)_{i\geq 1}$ are i.i.d.\@ random variables sampled from $\eta$, and $(\td G_i^w)_{i\leq N}$ are conditionally independent random variables taking values in $\{0,1\}$ with conditional distribution
\begin{equation}
\P\big\{\td G_i^w=1\mid \sigma^*, w_i\big\}:=\frac{c+\Delta \sigma_i^* w_i}{N}.
\end{equation}

\begin{lemma}\label{l.limit.GD.given.by.measure}
Suppose that the sequence of enriched free energies $(\bar F_N)_{N\geq 1}$ converges pointwise to some limit $f:\Rp\times \M_+\to \R$ along a subsequence $(N_k)_{k\geq 1}$, and fix $(t,\mu)\in \Rp\times \mathrm{Reg}(\M_+)$. If $f(t,\cdot)$ is Gateaux differentiable at $\mu \in \mathrm{Reg}(\M_+)$, then there exists $\nu \in \M_p$ with
\begin{equation}
G_\nu=D_\mu f(t,\mu,\cdot).
\end{equation}
\end{lemma}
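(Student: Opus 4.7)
My plan is to combine formula \eqref{e.SBM.en.FE.der.mu} for the Gateaux derivative density of the finite-volume free energy with the compactness of the interval $[-1,1]$, so as to extract a probability measure $\nu \in \M_p$ along some further subsequence and identify the pointwise limit of $D_\mu \bar F_{N_k}(t,\mu,\cdot)$ with $G_\nu$. Proposition~\ref{p.regularity.cont.of.derivative} will then force $D_\mu f(t,\mu,\cdot) = G_\nu$ by uniqueness of the limit.

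First, I would rewrite \eqref{e.SBM.en.FE.der.mu} in the compact form
\begin{equation*}
D_\mu \bar F_N(t,\mu,x) = \E h_x\big(\langle \sigma_1 \rangle \big) + \BigO\big(N^{-1}\big),
\end{equation*}
where $h_x : [-1,1] \to \R$ is the continuous bounded function defined by $h_x(y) := (c + \Delta y x) \log(c + \Delta y x) - c - \Delta \m x$, the boundedness relying on $c > \abs{\Delta}$. Since $\langle \sigma_1 \rangle$ takes values in $[-1,1]$, the sequence of laws of $\langle \sigma_1 \rangle$ under $\P$ is tight on $[-1,1]$, and Prokhorov's theorem will let me pass to a further subsequence of $(N_k)_{k \geq 1}$ along which these laws converge weakly to some $\nu \in \Pr[-1,1]$. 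The Nishimori identity \eqref{e.SBM.Nishimori} gives $\E \langle \sigma_1 \rangle = \E \sigma_1^* = \m$ at every $N$, so $\int y \, \mathrm{d} \nu(y) = \m$ and $\nu$ lies in $\M_p$.

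Then, continuity and boundedness of $h_x$ combined with the weak convergence would give, for each fixed $x \in [-1,1]$,
\begin{equation*}
\lim_{k \to +\infty} D_\mu \bar F_{N_k}(t,\mu,x) = \int_{-1}^1 h_x(y) \, \mathrm{d} \nu(y) = G_\nu(x),
\end{equation*}
where the last equality is a short expansion of $h_x$ using $\int y \, \mathrm{d}\nu(y) = \m$ together with the definitions \eqref{e.intro.SBM.g}--\eqref{e.intro.SBM.G.mu}. Since the densities $D_\mu \bar F_{N_k}(t,\mu,\cdot)$ are uniformly Lipschitz in $x$ by \eqref{e.SBM.extended.FE.Lip.der.bound}, Arzelà--Ascoli upgrades this pointwise convergence to uniform convergence on $[-1,1]$, which in particular implies weak convergence of the derivative densities to $G_\nu$ when tested against any finite measure on $[-1,1]$. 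Comparing with Proposition~\ref{p.regularity.cont.of.derivative} and using continuity of both $G_\nu$ and $D_\mu f(t,\mu,\cdot)$, uniqueness of the weak limit then yields $D_\mu f(t,\mu,\cdot) = G_\nu$, as desired.

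The argument is essentially an assembly of pieces already developed in Section~\ref{sec:regularity}, and I do not anticipate a serious obstacle. The one mild subtlety worth flagging is that the lemma merely asks for existence of $\nu \in \M_p$, so the tightness of $\langle \sigma_1 \rangle$ on $[-1,1]$ is already enough, with no need to invoke the multioverlap concentration machinery from Section~\ref{sec:concentration}. That machinery will enter only once one also wants $\nu$ to generate the asymptotic spin array in the sense of \eqref{gen.spin.array.1}--\eqref{gen.spin.array.2}, which lies beyond the present statement.
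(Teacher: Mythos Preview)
Your proposal is correct and follows essentially the same approach as the paper's own proof: both take $\nu$ to be a weak subsequential limit of the laws of $\langle \sigma_1 \rangle$, use the Nishimori identity to place $\nu$ in $\M_p$, recognize $D_\mu \bar F_{N_k}(t,\mu,\cdot)$ as $G_{\nu_{N_k}}$ up to $\BigO(N_k^{-1})$, and invoke Proposition~\ref{p.regularity.cont.of.derivative} to identify the limit. Your additional Arzel\`a--Ascoli step and the explicit $h_x$ notation are harmless elaborations of what the paper leaves implicit.
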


\begin{proof}
For each integer $k\geq 1$, let $\nu_{N_k}:=\mathcal{L}\big(\langle \sigma_1\rangle\big)$ denote the law (under $\P$) of the Gibbs average of a spin variable, where the Gibbs measure is as in \eqref{e.SBM.Gibbs}. The Nishimori identity \eqref{e.SBM.Nishimori}, the derivative expression \eqref{e.SBM.en.FE.der.mu}, and the definition of the kernel $g$ in \eqref{e.intro.SBM.g} imply that
\begin{equation}\label{e.limit.GD.given.by.measure.key}
D_\mu \bar F_{N_k}(t,\mu,\cdot)=G_{\nu_{N_k}}(\cdot)+\BigO(N_k^{-1}).    
\end{equation}
By the Prokhorov theorem, the sequence $(\nu_{N_k})_{k\geq 1}\subset \M_p$ admits a subsequential limit $\nu \in \M_p$. Invoking Proposition \ref{p.regularity.cont.of.derivative} to let $k$ tend to infinity in \eqref{e.limit.GD.given.by.measure.key} completes the proof.
\end{proof}

\begin{lemma}\label{l.G.injective}
If $\nu,\nu'\in \M_p$ are such that $G_\nu=G_{\nu'}$, then $\nu=\nu'$.
\end{lemma}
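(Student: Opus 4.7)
The plan is to use the explicit power series expansion of $g$ given in \eqref{e.intro.SBM.g} to reduce the injectivity of $\nu \mapsto G_\nu$ to the uniqueness of a probability measure on $[-1,1]$ given its moments. Since $\lvert \Delta\rvert/c<1$, the series $\sum_{n\geq 2}\frac{(-\Delta/c)^n}{n(n-1)}z^n$ has radius of convergence $c/\lvert \Delta\rvert>1$ and converges absolutely and uniformly for $\lvert z\rvert\leq 1$. Plugging $z=xy$ with $x,y\in[-1,1]$ and integrating against $\nu\in \M_p$ (interchanging sum and integral via dominated convergence) yields
\begin{equation*}
G_\nu(x)=c\log(c)-c+\Delta\m\log(c)\,x+\sum_{n\geq 2}\frac{(-\Delta)^n}{c^{n-1}n(n-1)}\,m_n(\nu)\,x^n,
\end{equation*}
where $m_n(\nu):=\int_{-1}^1 y^n\,\ud\nu(y)$ denotes the $n$-th moment of $\nu$, and I have used that $m_1(\nu)=\m$ for every $\nu\in\M_p$.

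Suppose now that $\nu,\nu'\in\M_p$ satisfy $G_\nu=G_{\nu'}$ on $[-1,1]$. Both functions extend to real-analytic functions on a neighborhood of $[-1,1]$, so the equality of the two convergent power series on an interval forces equality of their Taylor coefficients at $0$. For each $n\geq 2$, the coefficient $\frac{(-\Delta)^n}{c^{n-1}n(n-1)}$ is nonzero because $\Delta\neq 0$, and we deduce $m_n(\nu)=m_n(\nu')$. Together with $m_0(\nu)=m_0(\nu')=1$ and $m_1(\nu)=m_1(\nu')=\m$ from $\nu,\nu'\in\M_p$, this shows that all moments of $\nu$ and $\nu'$ coincide.

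Finally, since $\nu$ and $\nu'$ are probability measures supported on the compact interval $[-1,1]$, they are uniquely determined by their sequence of moments (by the Weierstrass approximation theorem: polynomials are dense in $C([-1,1])$, so equality of all moments forces $\int h\,\ud\nu=\int h\,\ud\nu'$ for every continuous $h$). Hence $\nu=\nu'$, which proves the lemma. The argument contains no serious obstacle; the only point that needed to be verified is that the power series expansion of $g$ from \eqref{e.intro.SBM.g} has nonzero coefficients at every degree $n\geq 2$, which is immediate from the assumption $\Delta\neq 0$.
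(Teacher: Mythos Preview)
Your proof is correct and follows essentially the same approach as the paper: expand $G_\nu$ via the power series for $g$ in \eqref{e.intro.SBM.g}, match coefficients to recover all moments of order $n\geq 2$, and conclude by moment determinacy on $[-1,1]$ via (Stone--)Weierstrass. Your version is slightly more explicit about the radius of convergence and the nonvanishing of the coefficients, but the argument is the same.
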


\begin{proof}
The equality $G_\nu=G_{\nu'}$ means that for all $x\in [-1,1]$,
\begin{align*}
c\log(c)-c+\Delta \m x+&c\sum_{n\geq 2}\frac{(-\Delta/c)^n}{n(n-1)}\int_{-1}^1 y^n \ud \nu(y)x^n\\
&=c\log(c)-c+\Delta \m x+c\sum_{n\geq 2}\frac{(-\Delta/c)^n}{n(n-1)}\int_{-1}^1 y^n \ud \nu'(y)x^n.
\end{align*}
It follows that $\nu$ and $\nu'$ have all the same moments of order $n\geq 2$, and are therefore equal by the Stone-Weierstrass theorem (see e.g.\ \cite[Theorem~A.10]{TD_JC_book}). 
\end{proof}

\begin{lemma}\label{l.limit.GD.measure.is.generator}
Suppose that the sequence of enriched free energies $(\bar F_N)_{N\geq 1}$ converges pointwise to some limit $f:\Rp\times \M_+\to \R$ along a subsequence $(N_k)_{k\geq 1}$. If $(t,\mu)\in \Rp\times \mathrm{Reg}(\M_+)$ is such that $f(t,\cdot)$ is Gateaux differentiable at $\mu$ and $\nu \in \M_p$ is such that $G_\nu=D_\mu f(t,\mu,\cdot)$ (which exists by Lemma \ref{l.limit.GD.given.by.measure}), then there is a sequence $\smash{\big(N_k',\lambda^{N_k'}\big)_{k\geq 1}}$ such that the following holds. The sequence $\smash{(N_k')_{k\geq 1}}\subset (N_k)_{k\geq 1}$ increases to infinity, the multioverlap concentration \eqref{e.multioverlap.conc} for the Hamiltonian \eqref{e.cavity.pert.mod.H} with perturbation parameters $\smash{(\lambda^{N_k'})_{k\geq 1}}$ holds for every $n\geq 1$, and the asymptotic spin array associated with the perturbation parameters $\smash{(\lambda^{N_k'})_{k\geq 1}}$ is generated by $\nu$ according to \eqref{gen.spin.array.1}-\eqref{gen.spin.array.2}.
\end{lemma}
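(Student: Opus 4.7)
The plan has three main steps. First, combining Lemma \ref{l.multioverlap.average.conc} with the Cesàro-style extraction argument of \cite[Lemma 3.3]{PanSKB}, I would produce a sequence of perturbation parameters $(\lambda^{N_k})_{k\ge 1}$ along the given subsequence $(N_k)_{k\ge 1}$ such that the multioverlap concentration \eqref{e.multioverlap.conc} holds for every $n\ge 1$ under the perturbed Gibbs measure \eqref{e.multioverlap.Gibbs.pert}. Applying the Prokhorov theorem to the finite-dimensional laws of the spin array and then Proposition \ref{p.multioverlap.asymptotic.spin} along a further subsequence $(N_k')_{k\ge 1}\subset (N_k)_{k\ge 1}$, I extract a probability measure $\nu'\in\M_p$ such that the limit spin array is generated by $\nu'$ via \eqref{gen.spin.array.1}-\eqref{gen.spin.array.2}. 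The rest of the argument is devoted to showing that $\nu'=\nu$.

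By Lemma \ref{l.G.injective} it suffices to show $G_{\nu'}=G_\nu$, and since $G_\nu=D_\mu f(t,\mu,\cdot)$ by hypothesis, the task reduces to proving $G_{\nu'}=D_\mu f(t,\mu,\cdot)$. The derivation of \eqref{e.SBM.en.FE.der.mu} relies only on the Nishimori identity, which holds equally for the perturbed Gibbs measure \eqref{e.multioverlap.Gibbs.pert}, and the perturbation Hamiltonians \eqref{e.multioverlap.gaussian.pert}-\eqref{e.multioverlap.exponential.pert} are $\mu$-independent, so the same derivation (together with the definitions \eqref{e.intro.SBM.g} and \eqref{e.intro.SBM.G.mu}) yields
\begin{equation*}
D_\mu \bar F_N(t,\mu,\lambda^N;x) = G_{\nu_N'}(x) + \BigO(N^{-1}),
\end{equation*}
where $\nu_N':=\mathcal{L}(\langle \sigma_1\rangle_N)$ is the law under $\P$ of a single-site perturbed Gibbs magnetization. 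Site exchangeability gives $\int x^n\ud\nu_N'(x)=\E\langle\sigma_1^1\cdots\sigma_1^n\rangle_N=\E\langle R_{1,\ldots,n}\rangle_N$, and by the multioverlap concentration together with the finite-dimensional convergence of the spin array along $(N_k')$, this converges to $\E x_1^n=\int x^n \ud\nu'(x)$. Hence $\nu_{N_k'}'\to\nu'$ weakly, and continuity of $g$ on $[-c-\abs{\Delta},c+\abs{\Delta}]$ gives $G_{\nu_{N_k'}'}(x)\to G_{\nu'}(x)$ pointwise in $x\in[-1,1]$.

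The final step is to pass to the limit on the left-hand side of the above display. I would verify that Proposition \ref{p.regularity.cont.of.derivative} applies with $\bar F_N$ replaced by $\bar F_N(\cdot,\cdot,\lambda^N)$. This requires two ingredients: pointwise convergence of the perturbed free energy to $f$, which follows from Lemma \ref{l.multioverlap.pert.no.effect.FE} since the bound there is uniform in $(t,\mu)$; and the convexity of $r\mapsto \bar F_N(t,\mu+r\nu,\lambda^N)$, which carries over from the proof of Proposition \ref{p.regularity.en.FE.convex} by simply absorbing the $\mu$-independent perturbation terms into the ``prior'' measure $P_{t,\mu}^*$ used there. We therefore obtain $D_\mu \bar F_{N_k'}(t,\mu,\lambda^{N_k'};\cdot)\to D_\mu f(t,\mu,\cdot)$ weakly. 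Combined with the pointwise convergence $G_{\nu_{N_k'}'}\to G_{\nu'}$ and the identity $G_\nu=D_\mu f(t,\mu,\cdot)$, this forces $G_{\nu'}=G_\nu$, and then $\nu'=\nu$ by Lemma \ref{l.G.injective}, completing the proof. The main obstacle is precisely this transfer of Proposition \ref{p.regularity.cont.of.derivative} to the perturbed setting; the key point that makes it work is the $\mu$-independence of the perturbation.
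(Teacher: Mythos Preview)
Your strategy matches the paper's: extract perturbation parameters giving multioverlap concentration, pass to a subsequence along which the spin array converges to one generated by some $\nu'\in\M_p$, express the $\mu$-derivative of the perturbed free energy in terms of the multioverlap moments, and combine the convergence of these moments with a transferred version of Proposition~\ref{p.regularity.cont.of.derivative} to conclude $G_{\nu'}=D_\mu f(t,\mu,\cdot)=G_\nu$, whence $\nu'=\nu$ by Lemma~\ref{l.G.injective}.

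There is one oversight. The lemma asks for the multioverlap concentration and the limit spin array relative to the \emph{modified} perturbed Hamiltonian \eqref{e.cavity.pert.mod.H} with Gibbs measure \eqref{e.cavity.gibbs}, whereas you work throughout with the unmodified perturbed Hamiltonian \eqref{e.multioverlap.pert.H} and its Gibbs measure \eqref{e.multioverlap.Gibbs.pert}. As written, your conclusion is about the wrong spin array. The fix is minor: as remarked at the end of Section~\ref{sec:concentration}, the results of that section apply verbatim to the modified Hamiltonian, so you should run the entire argument with $\langle\cdot\rangle'$ and the modified perturbed free energy $\bar F_N'(t,\mu,\lambda)$ from \eqref{e.cavity.mod.pert.FE} in place of $\langle\cdot\rangle_N$ and $\bar F_N(t,\mu,\lambda)$. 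When transferring Proposition~\ref{p.regularity.cont.of.derivative} you then also need Lemma~\ref{l.cavity.mod.FE.equiv} (in addition to Lemma~\ref{l.multioverlap.pert.no.effect.FE}) to guarantee that $\bar F_{N_k'}'(t,\cdot,\lambda^{N_k'})\to f(t,\cdot)$ pointwise; the convexity argument from Proposition~\ref{p.regularity.en.FE.convex} still goes through, since both the perturbation terms and the modification of the $t$-part are $\mu$-independent and can be absorbed into the prior. This is exactly how the paper proceeds.
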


\begin{proof}
Combining Lemma \ref{l.multioverlap.average.conc} with the arguments in \cite[Lemma 3.3]{PanSKB}, it is possible to find a sequence of perturbation parameters $(\lambda^N)_{N\geq 1}$ such that the multioverlap concentration \eqref{e.multioverlap.conc} for the perturbed Hamiltonian \eqref{e.cavity.pert.mod.H} holds for all $n\geq 1$. Using the Prokhorov theorem, we can find a subsequence $(N_k')_{k\geq 1}$ of $(N_k)_{k\geq 1}$ along which the distribution of the spin array $(\sigma_i^\ell)_{i,\ell\geq 1}$ under the Gibbs measure $\E \langle \cdot \rangle'_{N_k}$ with perturbation parameter $\smash{\lambda^{N_k}}$ converges in the sense of finite-dimensional distributions. By Proposition \ref{p.multioverlap.asymptotic.spin}, there exists a probability measure $\nu^* \in \M_p$ such that the asymptotic spin array is generated by $\nu^*$ according to \eqref{gen.spin.array.1}-\eqref{gen.spin.array.2}. We now show that $\nu^*=\nu$. An identical computation to that leading to the derivative expression \eqref{e.SBM.en.FE.der.mu} reveals that the Gateaux derivative of the perturbed modified free energy \eqref{e.cavity.mod.pert.FE} is given by
\begin{equation*}
D_\mu \bar F'_{N_k'}(t,\mu, \lambda^{N_k'},x)=\big(c+\Delta \m x\big)\log(c)+c\sum_{n\geq 2}\frac{(-\Delta/c)^n}{n(n-1)}\E\langle R_{1,\ldots,n}\rangle_{N_k'}' x^n-c+o_{N_k'}(1).
\end{equation*}
Remembering Lemmas \ref{l.cavity.mod.FE.equiv} and \ref{l.multioverlap.pert.no.effect.FE}, and arguing as in the proof of Proposition \ref{p.regularity.cont.of.derivative} to let $k$ tend to infinity reveals that $D_\mu f(t,\mu,\cdot)=G_{\nu^*}$. Leveraging the assumption that $G_{\nu}=D_\mu f(t,\mu,\cdot)$ implies that $G_{\nu}=G_{\nu^*}$. Invoking Lemma \ref{l.G.injective} completes the proof.
\end{proof}

\begin{lemma}\label{l.main.assuming.differentiability}
Suppose that the sequence of enriched free energies $(\bar F_N)_{N\geq 1}$ converges pointwise to some limit $f:\Rp\times \M_+\to \R$, and fix $(t,\mu)\in \Rp\times \mathrm{Reg}(\M_+)$. If $f(t,\cdot)$ is Gateaux differentiable at $\mu \in \mathrm{Reg}(\M_+)$ and $\nu \in \M_p$ is such that $G_\nu=D_\mu f(t,\mu,\cdot)$ (which exists by Lemma \ref{l.limit.GD.given.by.measure}), then
\begin{equation}
f(t,\mu)=\Par_{t,\mu}(\nu).
\end{equation}
\end{lemma}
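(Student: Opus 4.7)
The plan is to combine the two-sided bound on the limit free energy coming from Proposition~\ref{p.cavity.limit.bounds} with the rigidity imposed by the Gateaux differentiability hypothesis. Both bounding measures will be forced to coincide with $\nu$, collapsing the sandwich to an equality.

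First, I would invoke Proposition~\ref{p.cavity.limit.bounds} at the point $(t,\mu)$ to produce two sequences $\bigl(N_k^\pm, \lambda^{N_k^\pm}\bigr)_{k \ge 1}$ along which the multioverlap concentration \eqref{e.multioverlap.conc} holds for every $n\ge 1$, together with measures $\nu^\pm \in \M_p$ generating the corresponding asymptotic spin arrays according to \ref{gen.spin.array.1}-\ref{gen.spin.array.2}, and such that
\[
\Par_{t,\mu}(\nu^-) \le \liminf_{N \to +\infty} \bar F_N(t,\mu) \le \limsup_{N \to +\infty} \bar F_N(t,\mu) \le \Par_{t,\mu}(\nu^+).
\]
Because $(\bar F_N)_{N \ge 1}$ converges to $f$ by hypothesis, both the $\liminf$ and the $\limsup$ equal $f(t,\mu)$, so what remains is to show $\nu^+ = \nu^- = \nu$.

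Second, I would revisit the derivative identification carried out inside the proof of Lemma~\ref{l.limit.GD.measure.is.generator}, applying it this time to each of $\nu^\pm$ in place of $\nu$. Starting from the explicit formula
\[
D_\mu \bar F_{N_k^\pm}'\bigl(t,\mu,\lambda^{N_k^\pm},x\bigr) = (c+\Delta \m x)\log c + c\sum_{n\ge 2} \frac{(-\Delta/c)^n}{n(n-1)}\, \E\langle R_{1,\ldots,n}\rangle'_{N_k^\pm}\, x^n - c + o(1),
\]
which follows from the same computation as for \eqref{e.SBM.en.FE.der.mu}, the multioverlap concentration along $\bigl(N_k^\pm,\lambda^{N_k^\pm}\bigr)$ combined with Lemma~\ref{l.multioverlap.asymptotic.average} lets me replace $\E\langle R_{1,\ldots,n}\rangle'$ by the $n$-th moment of $\nu^\pm$. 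The right-hand side then converges to $G_{\nu^\pm}(x)$. Passing to the limit on the left-hand side, via Lemmas~\ref{l.cavity.mod.FE.equiv} and \ref{l.multioverlap.pert.no.effect.FE} together with the interchange of limit and Gateaux derivative established in the proof of Proposition~\ref{p.regularity.cont.of.derivative}, yields $G_{\nu^\pm} = D_\mu f(t,\mu,\cdot)$.

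Third, since by hypothesis $G_\nu = D_\mu f(t,\mu,\cdot)$, the injectivity of $\eta \mapsto G_\eta$ on $\M_p$ from Lemma~\ref{l.G.injective} forces $\nu^+ = \nu^- = \nu$. Substituting back into the sandwich inequality completes the proof of $f(t,\mu) = \Par_{t,\mu}(\nu)$. The main technical point is the interchange of limit and Gateaux derivative appearing in the second step; this is however already handled by the convexity argument of Section~\ref{subsec:regularity.convexity} and reused in Proposition~\ref{p.regularity.cont.of.derivative}, so no new ingredient is required beyond careful bookkeeping.
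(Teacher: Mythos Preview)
Your proposal is correct and follows essentially the same approach as the paper's proof: invoke Proposition~\ref{p.cavity.limit.bounds} to obtain the sandwich $\Par_{t,\mu}(\nu^-)\le f(t,\mu)\le \Par_{t,\mu}(\nu^+)$, then reuse the derivative-identification argument from the proof of Lemma~\ref{l.limit.GD.measure.is.generator} to show $G_{\nu^\pm}=D_\mu f(t,\mu,\cdot)=G_\nu$, and conclude $\nu^\pm=\nu$ via Lemma~\ref{l.G.injective}. Your second step is slightly more explicit than the paper's, but the logic is identical.
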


\begin{proof}
Let $(N_k,\lambda^{N_k^\pm})_{k\geq 1}$ and $\nu^\pm\in \M_p$ be as in the proof of Proposition \ref{p.cavity.limit.bounds}. By inequality \eqref{e.cavity.limit.bounds}, it suffices to show that $\nu^-=\nu^+=\nu$. Since $\nu^\pm$ generate the asymptotic spin array associated with the perturbation parameters $\smash{(\lambda^{N_k^\pm})_{k\geq1}}$ according to \eqref{gen.spin.array.1}-\eqref{gen.spin.array.2}, arguing as in the proof of Lemma \ref{l.limit.GD.measure.is.generator} with $\nu^*$ replaced by $\nu^\pm$, it is possible to show that $G_{\nu^\pm}=D_\mu f(t,\mu,\cdot)=G_\nu$. Invoking Lemma \ref{l.G.injective} gives $\nu=\nu^\pm$ and completes the proof.
\end{proof}

\begin{lemma}\label{l.GD.of.limit.FE.as.GD.of.IC}
Suppose that the sequence of enriched free energies $(\bar F_N)_{N\geq 1}$ converges pointwise to some limit $f:\Rp\times \M_+\to \R$ along a subsequence $(N_k)_{k\geq 1}$, and fix $(t,\mu)\in \Rp\times \mathrm{Reg}(\M_+)$. If $f(t,\cdot)$ is Gateaux differentiable at $\mu \in \mathrm{Reg}(\M_+)$ and $\nu \in \M_p$ is such that $G_\nu=D_\mu f(t,\mu,\cdot)$ (which exists by Lemma \ref{l.limit.GD.given.by.measure}), then
\begin{equation}
D_\mu f(t,\mu,\cdot)=D_\mu \psi(\mu+t\nu,\cdot).
\end{equation}
\end{lemma}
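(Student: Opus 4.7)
The plan is to reduce the claimed equality to a distributional identification of the cavity Gibbs measure on an additional spin. Using the assumption $G_\nu = D_\mu f(t,\mu,\cdot)$ and the explicit form of $g$ in \eqref{e.intro.SBM.g}, one has
\begin{equation*}
G_\nu(x) = \E(c + \Delta x y_1)\log(c + \Delta x y_1) - c - \Delta \m x, \quad y_1\sim\nu.
\end{equation*}
Comparing this with the formula \eqref{e.intro.IC.Gateaux.der} evaluated at $\mu+t\nu$ in place of $\mu$, namely
\begin{equation*}
D_\mu\psi(\mu+t\nu,x) = \E\langle c + \Delta\epsilon x\rangle_*\log\langle c + \Delta\epsilon x\rangle_* - c - \Delta \m x,
\end{equation*}
where $\langle\cdot\rangle_*$ is now the Gibbs measure of \eqref{e.intro.Gibbs.star} associated with the measure $\mu+t\nu$, a Taylor expansion in $x$ (using that both $\E\langle\epsilon\rangle_* = \m$ by the Nishimori identity and $\E y_1 = \m$ since $\nu\in\M_p$) shows that the claim is equivalent to the moment identities $\E\langle\epsilon\rangle_*^n = \E y_1^n$ for every $n\geq 1$, i.e., to $\text{Law}(\langle\epsilon\rangle_*) = \nu$.

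This distributional identity is obtained by interpreting the extra spin $\epsilon$ of the cavity representation \eqref{e.cavity.AN} in two complementary ways. First, Step~1 of the proof of Lemma \ref{l.cavity.AN.limit} shows that, along the subsequence $\smash{(N_k',\lambda^{N_k'})_{k\geq 1}}$ produced by Lemma \ref{l.limit.GD.measure.is.generator}, the effective Gibbs measure on $\epsilon$ inside the cavity integrand of $A_N^1$ converges to the $\psi$-Gibbs measure $\langle\cdot\rangle_*$ associated with $\mu+t\nu$; in particular, the asymptotic law of its conditional Gibbs average is $\text{Law}(\langle\epsilon\rangle_*)$. Second, the Hamiltonian $H_{N+1}^{t,\mu}$ together with the perturbation Hamiltonians \eqref{e.multioverlap.gaussian.pert}-\eqref{e.multioverlap.exponential.pert} are symmetric in the $N+1$ spins, so $\epsilon$ is exchangeable with $\sigma_1,\dots,\sigma_N$ in the enlarged system. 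Applying Lemma \ref{l.limit.GD.measure.is.generator} to this $(N+1)$-system and refining the perturbation sequence if necessary, the asymptotic law of $\langle\epsilon\rangle$ equals that of $\langle\sigma_1\rangle$, which is $\nu$. Equating the two descriptions yields $\text{Law}(\langle\epsilon\rangle_*) = \nu$, and hence $D_\mu f(t,\mu,\cdot) = D_\mu\psi(\mu+t\nu,\cdot)$.

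The main obstacle is the strengthening of the cavity computation of Step~1 of Lemma \ref{l.cavity.AN.limit}, which concerns the limit of a logarithmic expectation, to a statement about the convergence in distribution of the effective cavity Gibbs measure on $\epsilon$. This requires a careful refinement of the Poisson-coloring and moment-matching argument used there, now tracking the joint distribution of the finite-volume Poisson sums in $z_N^{t,\mu}$ after the asymptotic spin array is passed to its limit, in order to align the cavity-induced Gibbs measure on the extra spin with the Gibbs measure $\langle\cdot\rangle_*$ associated with $\mu+t\nu$.
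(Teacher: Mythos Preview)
Your reduction in the first paragraph is correct and is precisely the content of Lemma~\ref{l.equivalence.of.FP.eqns}: the claim $D_\mu f(t,\mu,\cdot) = D_\mu\psi(\mu+t\nu,\cdot)$ is equivalent to $\text{Law}(\langle\epsilon\rangle_*) = \nu$. The two-step strategy you then propose to establish this fixed-point identity has the right intuition, but the obstacle you flag in the last paragraph is genuine and nontrivial. Step~1 of Lemma~\ref{l.cavity.AN.limit} only computes the scalar $\E\log\int_{\Sigma_1}\langle\exp z_N^{t,\mu}(\epsilon,\sigma)\rangle'\,\ud P^*(\epsilon)$; upgrading this to convergence in law of the ratio $\int_{\Sigma_1}\epsilon\langle\exp z_N^{t,\mu}\rangle'\,\ud P^*(\epsilon)\big/\int_{\Sigma_1}\langle\exp z_N^{t,\mu}\rangle'\,\ud P^*(\epsilon)$ would require controlling all moments of this ratio, not a single logarithmic functional. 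Your symmetry step also has a loose end: Lemma~\ref{l.limit.GD.measure.is.generator} produces perturbation parameters for the modified size-$N$ Hamiltonian~\eqref{e.cavity.pert.mod.H}, whereas the exchangeability you invoke lives in the size-$(N+1)$ system with its own perturbation; matching the generators of the two systems along a common subsequence is not automatic and would itself require an argument of the type you are trying to avoid.

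The paper sidesteps both issues by not passing through the fixed-point reformulation. It computes $D_\mu\bar F_{N+1}(t,\mu,\lambda^N;\eta)$ directly from the exact formula~\eqref{e.SBM.en.FE.der.mu.exact}, uses symmetry between sites and the cavity decomposition~\eqref{e.intro.cavity.1} and~\eqref{e.intro.cavity.3} to isolate the cavity spin, and then passes to the limit with exactly the same truncation and Weierstrass-approximation machinery as in Step~1 of Lemma~\ref{l.cavity.AN.limit}. The key observation is that $D_\mu\bar F_{N+1}$ is itself a logarithmic expectation of the same structural form as $A_N^1$, only with one extra factor $(c+\Delta\epsilon w_1)^{\td G_1^w}$ inside the bracket; so the existing argument applies verbatim and delivers $D_\mu\psi(\mu+t\nu;\eta)$ in the limit without ever needing distributional convergence of the cavity Gibbs measure on $\epsilon$. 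Your route would ultimately work, but it asks for a strictly stronger intermediate statement than the lemma requires; the paper stays at the level of the specific functional that must be identified.
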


\begin{proof}
Let $(N_k',\lambda^{N_k'})_{k\geq 1}$ be as in the statement of Lemma \ref{l.limit.GD.measure.is.generator}. To alleviate the exposition, we abuse notation and denote this sequence by $(N,\lambda^N)_{N\geq 1}$. An identical computation to that leading to the derivative expression \eqref{e.SBM.en.FE.der.mu.exact} reveals that the Gateaux derivative of the perturbed free energy \eqref{e.multioverlap.pert.FE} in the direction of a probability measure $\eta \in \Pr[-1,1]$ is given by
\begin{equation*}
D_\mu \overline{F}_{N+1}(t,\mu,\lambda^N;\eta)=\sum_{i\leq N+1}\E\log \bigg\langle \big(c+\Delta \sigma_iw_i\big)^{\tG_i^w}\Big(1-\frac{c+\Delta \sigma_iw_i}{N+1}\Big)^{1-\tG_i^w} \bigg\rangle_{N+1},
\end{equation*}
where the bracket $\langle \cdot \rangle_{N+1}$ denotes the Gibbs average \eqref{e.multioverlap.Gibbs.pert} with perturbation parameter $\lambda^N$, the $(w_i)_{i\geq 1}$ are i.i.d.\@ random variables sampled from $\eta$, and $(\td G_i^w)_{i\leq N+1}$ are conditionally independent random variables taking values in $\{0,1\}$ with conditional distribution
\begin{equation*}
\P\big\{\td G_i^w=1\mid \sigma^*, w_i\big\}:=\frac{c+\Delta \sigma_i^* w_i}{N+1}.
\end{equation*}
Taylor-expanding the logarithm and using the law of large numbers shows that the contribution of the terms in this sum that have $\td G_i^w$ equal to zero is
\begin{equation*}
-c -\frac{\Delta \E w_1}{N+1}\sum_{i\leq N+1} \sigma_i+\BigO(N^{-1})=-c- \Delta \m \E w_1+\BigO(N^{-1}).
\end{equation*}
Together with the symmetry between sites, this implies that
\begin{equation}\label{e.GD.of.limit.FE.as.GD.of.IC.key}
D_\mu \bar F_{N+1}(t,\mu,\lambda^N;\eta)=(N+1)\E\log \big\langle (c+\Delta \epsilon w_1)^{\tG_1^w}\big\rangle_{N+1}-c-\Delta \m \E w_1.
\end{equation}
The distributional identities \eqref{e.intro.cavity.1} and \eqref{e.intro.cavity.3} reveal that the first term in this expression is given by
\begin{align*}
D^1
&:=\E\log \int_{\Sigma_1}\big\langle (c+\Delta \epsilon w_{1})^{\tG_{1}^w}\exp z_N^{t,\mu}(\epsilon,\sigma)\big\rangle'-\E\log \int_{\Sigma_1}\langle \exp z_N^{t,\mu}(\epsilon,\sigma)\rangle'\ud P^*(\epsilon)
\end{align*}
for the cavity field $z_N^{t,\mu}$ in \eqref{e.cavity.z.field} and the modified Gibbs average \eqref{e.cavity.gibbs}. By Lemma~\ref{l.limit.GD.measure.is.generator}, the choice of sequence $(N,\lambda^N)_{N\geq 1}$ ensures that the asymptotic spin array $(\sigma_i^\ell)_{i,\ell\geq 1}$ is generated by $\nu$ according to \eqref{gen.spin.array.1}-\eqref{gen.spin.array.2}, so arguing as in Step 1 of the proof of Lemma \ref{l.cavity.AN.limit} it is possible to show that
\begin{align*}
D^1=\E\log \int_{\Sigma_1}e^{-c(t+s)-\Delta\epsilon(t\m+s\E x_1)} &(c+\Delta \epsilon w_1)^{{G'}^w}\exp z_N'(\epsilon)\ud P^*(\epsilon)\\
&-\E\log \int_{\Sigma_1}e^{-c(t+s)-\Delta\epsilon(t\m+s\E x_1)} \exp z_N'(\epsilon)\ud P^*(\epsilon)+o_N(1),
\end{align*}
where $s:=\mu[-1,1]$, the cavity field $z_N'$ is defined in \eqref{e.cavity.simplified.z.field}, and ${G'}^w$ has conditional distribution
\begin{equation*}
\P\big\{{G'}^w=1 \mid \epsilon^*, w_1\big\}:=\frac{c+\Delta \epsilon^* w_1}{N+1}.
\end{equation*}
Continuing to argue as in Step 1 of the proof of Lemma \ref{l.cavity.AN.limit} gives
\begin{equation*}
D^1=\E\log \big\langle (c+\Delta \epsilon w_1)^{{G'}^w}\big\rangle_*+o_N(1)=\frac{1}{N+1}\E\langle c+\Delta \epsilon w_1 \rangle_*\log\langle c+\Delta \epsilon w_1\rangle_*+o_N(1).
\end{equation*}
Substituting this into \eqref{e.GD.of.limit.FE.as.GD.of.IC.key} and leveraging Lemma \ref{l.multioverlap.pert.no.effect.FE} and Proposition \ref{p.regularity.cont.of.derivative} to let $N$ tend to infinity reveals that
\begin{equation*}
D_\mu f(t,\mu;\eta)=\E\langle c+\Delta \epsilon w_1 \rangle_*\log\langle c+\Delta \epsilon w_1\rangle_*-c-\Delta \m \E w_1.
\end{equation*}
Remembering the expression \eqref{e.intro.IC.Gateaux.der} for the Gateaux derivative of the initial condition completes the proof.
\end{proof}

\begin{lemma}\label{l.equivalence.of.FP.eqns}
For any $(t,\mu)\in \Rp\times \M_+$, a measure $\nu \in \M_p$ satisfies $G_\nu=D_\mu\psi(\mu+t\nu,\cdot)$ if and only if it is a fixed point of the map $\nu\mapsto \Gamma_{t,\mu}(\nu)$.
\end{lemma}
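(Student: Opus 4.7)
My strategy is to rewrite both sides of the equivalence as statements about moments of probability measures on $[-1,1]$. Specifically, I will show that $G_\nu = D_\mu\psi(\mu+t\nu,\cdot)$ is equivalent to $\nu = \text{Law}(\langle\epsilon\rangle_*)$, where $\langle\cdot\rangle_*$ is the Gibbs average on $\Sigma_1$ associated with the Hamiltonian $H^{\mu+t\nu}$ (defined just before \eqref{e.intro.Gibbs.star}), and then verify that this law coincides with $\Gamma_{t,\mu}(\nu)$ by unfolding the definitions.

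For the first reformulation, I would expand both sides in the power series for $g$ given in \eqref{e.intro.SBM.g}. Using $\int y\,\ud\nu(y) = \m$ (since $\nu \in \M_p$), one gets
\begin{equation*}
G_\nu(x) = \int g(xy)\,\ud\nu(y) = (c+\Delta\m x)\log c + c\sum_{n\geq 2}\frac{(-\Delta/c)^n}{n(n-1)}x^n\int y^n\,\ud\nu(y) - c.
\end{equation*}
For the derivative of $\psi$, since $\epsilon \in \{-1,+1\}$ we have $\langle c + \Delta\epsilon x\rangle_* = c + \Delta\langle\epsilon\rangle_* x$, and the algebraic identity $(c+\Delta z)\log(c+\Delta z) = g(z) + (c+\Delta z)$ together with the Nishimori relation $\E\langle\epsilon\rangle_* = \E\epsilon^* = \m$ applied to \eqref{e.intro.IC.Gateaux.der} would give
\begin{equation*}
D_\mu\psi(\mu+t\nu,x) = \E g(x\langle\epsilon\rangle_*) = (c+\Delta\m x)\log c + c\sum_{n\geq 2}\frac{(-\Delta/c)^n}{n(n-1)}x^n\,\E\langle\epsilon\rangle_*^n - c.
\end{equation*}
Matching coefficients in $x^n$ on $[-1,1]$ would reduce the equality $G_\nu = D_\mu\psi(\mu+t\nu,\cdot)$ to $\int y^n\,\ud\nu(y) = \E\langle\epsilon\rangle_*^n$ for all $n\geq 2$; together with the matching first moments and the compact support $[-1,1]$, the Stone--Weierstrass argument already used in Lemma~\ref{l.G.injective} then forces $\nu = \text{Law}(\langle\epsilon\rangle_*)$.

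The final step is to identify $\text{Law}(\langle\epsilon\rangle_*)$ with $\Gamma_{t,\mu}(\nu)$. Writing $H^{\mu+t\nu}(\epsilon)$ explicitly, the $\epsilon$-independent factor $\exp(-c(\mu+t\nu)[-1,1])$ cancels in the ratio defining $\langle\epsilon\rangle_*$, and the remaining exponential weight in $\epsilon$ together with $\prod_{x\in\Pi_*(\mu+t\nu)}(c+\Delta\epsilon x)$ reproduces exactly the expression inside the law in \eqref{e.intro.SBM.FP.operator}, once one identifies $\Pi_*(\mu+t\nu)$, whose intensity conditional on $\epsilon^*$ is $(c+\Delta\epsilon^* x)\,\ud(\mu+t\nu)(x)$, with the Poisson point process $\Pi_{t,\mu}(\nu)$ of the same description. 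There is no truly difficult step here; the main point requiring care is the bookkeeping that aligns the exponential weight factors and the Poisson intensities on the two sides, after which the two reformulations fit together automatically.
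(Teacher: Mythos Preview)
Your proposal is correct and follows essentially the same route as the paper: expand both $G_\nu$ and $D_\mu\psi(\mu+t\nu,\cdot)$ via the power series \eqref{e.intro.SBM.g} for $g$, match the coefficients of $x^n$ for $n\ge 2$, invoke Stone--Weierstrass to upgrade moment equality to equality of laws, and identify the resulting law with $\Gamma_{t,\mu}(\nu)$. You are in fact a bit more explicit than the paper in two places: you spell out why the first moments agree (via the Nishimori identity $\E\langle\epsilon\rangle_*=\m$, which the paper uses tacitly), and you unfold the definition of $\langle\epsilon\rangle_*$ to match it against \eqref{e.intro.SBM.FP.operator}, whereas the paper simply asserts that identification in one sentence. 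One small caution on that last step: the $\epsilon$-dependent exponential weight coming from $H^{\mu+t\nu}$ is $\exp\big(-\Delta\epsilon\int x\,\ud(\mu+t\nu)(x)\big)$, which is not literally the $e^{-\epsilon\m}$ appearing in \eqref{e.intro.SBM.FP.operator}, so ``reproduces exactly'' overstates things slightly; this is the bookkeeping you flag as needing care, and it is a point the paper itself does not address.
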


\begin{proof}
We denote by $y_1$ a random variable sampled from $\nu$. The definition of the function $g$ in \eqref{e.intro.SBM.g} implies that for every $x\in [-1,1]$,
\begin{equation}
G_{\nu}(x)=\int_{-1}^1 g(xy)\ud \nu^*(y)=(c+\Delta \E y_1 x)\log(c)+c\sum_{n\geq 2}\frac{(-\Delta/c)^n}{n(n-1)}\E y_1^nx^n-c.
\end{equation}
On the other hand, the derivative expression \eqref{e.intro.IC.Gateaux.der} implies that
\begin{equation}
D_\mu \psi(\mu+t\nu,x) = \E g(\langle \epsilon\rangle_*x)=(c+\Delta \E\langle \epsilon\rangle_* x)\log(c)+c\sum_{n\geq 2}\frac{(-\Delta/c)^n}{n(n-1)}\E\langle \epsilon\rangle_*^nx^n-c.
\end{equation}
The equation $G_\nu=D_\mu\psi(\mu+t\nu,\cdot)$ is therefore equivalent to the equality between all moments of $y_1$ and $\langle \epsilon\rangle_*$ of order $2$ and higher. By the Stone-Weierstrass theorem, this is in turn equivalent to the equality between the laws of $y_1$ and $\langle \epsilon\rangle_*$. These laws are equal if and only if the fixed point equation $\nu=\Gamma_{t,\mu}(\nu)$ is satisfied. This completes the proof.
\end{proof}

\begin{proof}[Proof of Theorem \ref{t.intro.main}]
Proposition \ref{p.regularity.differentiability} gives a sequence $(\mu_n)_{n\geq 1}\subset \mathrm{Reg}(\M_+)$ converging weakly to $\mu$ such that $f(t,\cdot)$ is Gateaux differentiable at every $\mu_n\in \mathrm{Reg}(\M_+)$. By Lemmas \ref{l.main.assuming.differentiability} and  \ref{l.GD.of.limit.FE.as.GD.of.IC}, for every $n\geq 1$, there is $\nu_n\in \M_p$ with
\begin{equation*}
G_{\nu_n}=D_\mu f(t,\mu_n,\cdot)=D_\mu \psi(\mu+t\nu_n,\cdot) \quad \text{and} \quad f(t,\mu_n)=\Par_{t,\mu_n}(\nu_n).
\end{equation*}
By the Prokhorov theorem, up to passing to a subsequence, we can assume that $(\nu_n)_{n \geq 1}$ converges weakly to some limit $\nu^*\in \M_p$. The continuity of the map $\nu\mapsto G_\nu$ and of the Gateaux derivative map $\nu\mapsto D_\mu\psi(\nu,\cdot)$ in \eqref{e.intro.IC.Gateaux.der} imply that $\nu^*$ satisfies the equation
\begin{equation}\label{e.intro.main.FP.prelim}
 G_{\nu^*}=D_\mu \psi(\mu + t\nu^*,\cdot).
\end{equation}
Similarly, the Lipschitz continuity of the limit free energy and of the asymptotic initial condition established in Proposition \ref{p.regularity.Lipschitz} imply that
\begin{equation*}
f(t,\mu)=\lim_{n\to +\infty}f(t,\mu_n)=\lim_{n\to +\infty}\Par_{t,\mu_n}(\nu_n)=\Par_{t,\mu}(\nu^*).
\end{equation*}
Remembering that equation \eqref{e.intro.main.FP.prelim} is equivalent to the fixed point equation $\nu^*=\Gamma_{t,\mu}(\nu^*)$ by Lemma \ref{l.equivalence.of.FP.eqns} completes the proof.
\end{proof}

\begin{proof}[Proof of Theorem \ref{t.intro.main.limit.array}]
Lemmas \ref{l.GD.of.limit.FE.as.GD.of.IC} and \ref{l.equivalence.of.FP.eqns} imply that $\nu^*$ is a fixed point of the map $\nu\mapsto \Gamma_{t,\mu}(\nu)$, and Lemma \ref{l.main.assuming.differentiability} ensures that $f(t,\mu)=\Par_{t,\mu}(\nu^*)$. The second part of the statement on the asymptotic spin array follows from Lemma \ref{l.limit.GD.measure.is.generator}. This completes the proof.
\end{proof}

\section{The disassortative and small-time settings}
\label{sec:special.cases}

Two limitations of Theorems \ref{t.intro.main} and \ref{t.intro.main.limit.array} are that they assume the existence of the limit free energy, and that they do not provide a simple characterization of the fixed point for which \eqref{e.intro.main} holds. In this section we focus on two settings in which these limitations can be addressed: the disassortative setting when $\Delta\leq 0$ and the small-time setting when $t\in \Rp$ is close to zero.

In the disassortative stochastic block model with two communities, it can be shown that the limit free energy exists and is given by evaluating the functional $\Par_{t,\mu}$ defined in \eqref{e.intro.HJ.functional} at the fixed point of the operator $\Gamma_{t,\mu}$ which maximizes $\Par_{t,\mu}$. This is the content of Proposition~\ref{p.intro.main.disassortative} which we now prove.

\begin{proof}[Proof of Proposition \ref{p.intro.main.disassortative}]
We fix $\nu \in \M_p$, and for each $u\in [0,1]$, we define the interpolating free energy
\begin{equation*}
\p(u):=\bar F_N(tu,\mu+t(1-u)\nu)
\end{equation*}
in such a way that $\p(1)=\bar F_N(t,\mu)$ and $\p(0)=\psi(\mu+t\nu)$. We denote by $x_1$ a random variable sampled from $\nu$. Recall from the derivative computations in \cite[Corollaries 2.2 and 2.5]{dominguez2024mutual} that
\begin{align}
\partial_t \overline{F}_N(t,\mu)&=\frac{1}{2}\big(c+\Delta\m^2\big)\log (c)+\frac{c}{2}\sum_{n\geq 2}\frac{(-\Delta/c)^n}{n(n-1)}\E\big\langle R_{[n]}^2\big\rangle-\frac{c}{2}+\BigO(N^{-1}), \label{e.en.FE.t.der}\\
D_\mu \overline{F}_N(t,\mu,x)&=\big(c+\Delta \m x\big)\log(c)+c\sum_{n\geq 2}\frac{(-\Delta/c)^n}{n(n-1)}\E\langle R_{[n]}\rangle x^n-c+\BigO(N^{-1}).\label{e.en.FE.mu.der}
\end{align}
Together with the assumption that $\Delta\leq 0$ these imply that
\begin{align*}
\p'(u)&=t\partial_t \bar F_N(tu, \mu+t(1-u)\nu)-t\int_{-1}^1 D_\mu\bar F_N(tu,\mu+t(1-u)\nu)\ud \nu\\
&=t\bigg(\frac{c}{2}-\frac{1}{2}\big(c+\Delta \m^2\big)\log(c)+\frac{c}{2}\sum_{n\geq 2}\frac{(-\Delta/c)^n}{n(n-1)}\E\big\langle R_{[n]}^2\big\rangle- c\sum_{n\geq 2}\frac{(-\Delta/c)^n}{n(n-1)}\E\langle R_{[n]}\rangle \E x_1^n\bigg)\\
&=-\frac{t}{2}\bigg(\big(c+\Delta \m^2\big)\log(c)+ c\sum_{n\geq 2}\frac{(-\Delta/c)^n}{n(n-1)}\big(\E x_1^n\big)^2 -c\bigg)+\frac{tc}{2}\sum_{n\geq 2}\frac{(-\Delta/c)^n}{n(n-1)}\E\big\langle (R_{[n]}- \E x_1^n)^2\big\rangle\\
&\geq -\frac{t}{2}\int_{-1}^1 G_\nu(y)\ud \nu(y).
\end{align*}
Leveraging the fundamental theorem of calculus and letting $N$ tend to infinity gives the lower bound
\begin{equation*}
\liminf_{N\to +\infty}\bar F_N(t,\mu)\geq \sup_{\nu\in \M_p}\Par_{t,\mu}(\nu).
\end{equation*}
The matching upper bound is immediate from Proposition~\ref{p.cavity.limit.bounds}.
\end{proof}

In the small-time setting, a Lipschitz continuity bound on the fixed point operator $\Gamma_{t,\mu}$ defined in \eqref{e.intro.SBM.FP.operator} can be combined with the Banach fixed point theorem to show that $\Gamma_{t,\mu}$ admits a unique fixed point $\nu_{t,\mu}\in \M_p$. Together with a Lipschitz continuity bound on the map $(t,\mu)\mapsto \nu_{t,\mu}$ and an argument similar to that in Lemma \ref{l.main.assuming.differentiability}, this uniqueness can be used to establish Proposition~\ref{p.intro.main.small.t}.

\begin{lemma}\label{l.t.small.FP.operator.props}
There exists a constant $C<+\infty$ such that the fixed point operator $\Gamma_{t,\mu}:\M_p\to\M_p$ defined in \eqref{e.intro.SBM.FP.operator} satisfies the following Lipschitz continuity bounds.
\begin{enumerate}
    \item For all $(t,\mu)\in \Rp\times \M_+$, and $\nu,\eta\in \M_p$,
    \begin{equation}\label{e.t.small.FP.operator.contraction}
    W\big(\Gamma_{t,\mu}(\nu),\Gamma_{t,\mu}(\eta)\big)\leq C t W(\nu,\eta).
    \end{equation}
    \item For all $(t,\mu), (t',\mu')\in \Rp\times \M_+$, and $\nu\in \M_p$,
    \begin{equation}\label{e.t.small.FP.operator.Lip}
    W\big(\Gamma_{t,\mu}(\nu), \Gamma_{t',\mu'}(\nu)\big)\leq C\big(\abs{t'-t}+\mu[-1,1]W\big(\bar \mu, \bar \mu'\big)+\abs{\mu[-1,1]-\mu'[-1,1]}\big).
    \end{equation}
\end{enumerate}
\end{lemma}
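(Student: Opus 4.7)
The plan is to prove both bounds by combining the Kantorovich-Rubinstein dual \eqref{e.Wasserstein.def.inf} with an explicit coupling of the underlying Poisson point processes that tracks point positions rather than merely point counts. Writing $F(\Pi)\in[-1,1]$ for the random variable appearing between the large parentheses on the right-hand side of \eqref{e.intro.SBM.FP.operator}, so that $\Gamma_{t,\mu}(\nu)=\mathrm{Law}(F(\Pi_{t,\mu}(\nu)))$, it suffices to construct joint couplings of $F(\Pi_{t,\mu}(\nu))$ with $F(\Pi_{t,\mu}(\eta))$, respectively $F(\Pi_{t',\mu'}(\nu))$, whose expected $L^1$ distances are bounded by the right-hand sides of \eqref{e.t.small.FP.operator.contraction} and \eqref{e.t.small.FP.operator.Lip}.

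The first ingredient is a Lipschitz estimate on $F$. Since $\epsilon\in\{-1,+1\}$, I can write $F(\Pi)=\tanh(u(\Pi))$ where $u(\Pi)=\frac{1}{2}\log\frac{p}{1-p}-\m+\frac{1}{2}\sum_{x\in\Pi}\log\frac{c+\Delta x}{c-\Delta x}$. Since $|\Delta|<c$, each per-point summand is both bounded and Lipschitz in $x\in[-1,1]$ with constant depending only on $c$ and $|\Delta|$, yielding a constant $C=C(c,|\Delta|,p)$ such that $|F((\Pi\setminus\{x\})\cup\{x'\})-F(\Pi)|\leq C|x-x'|$ whenever $x\in\Pi$, and $|F(\Pi\cup\{x'\})-F(\Pi)|\leq C$ for any added point $x'\in[-1,1]$.

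For \eqref{e.t.small.FP.operator.contraction}, the superposition principle decomposes (jointly in $\epsilon^*$) $\Pi_{t,\mu}(\nu)=\Pi_\mu\sqcup\Pi^\nu_t$ and $\Pi_{t,\mu}(\eta)=\Pi_\mu\sqcup\Pi^\eta_t$, where $\Pi_\mu$, $\Pi^\nu_t$, $\Pi^\eta_t$ are independent PPPs with respective intensities $(c+\Delta\epsilon^* x)\ud\mu(x)$, $t(c+\Delta\epsilon^* x)\ud\nu(x)$, $t(c+\Delta\epsilon^* x)\ud\eta(x)$, and $\Pi_\mu$ is shared between the two decompositions. To couple $\Pi^\nu_t$ and $\Pi^\eta_t$, I fix an optimal transport plan $\pi\in\Pr([-1,1]^2)$ with marginals $\nu,\eta$ and $\int|x-y|\ud\pi(x,y)=W(\nu,\eta)$, and construct a PPP $\Xi$ on $[-1,1]^2$ of intensity $t(c+\Delta\epsilon^* x)\ud\pi(x,y)$. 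Its $x$-projection is $\Pi^\nu_t$; its $y$-projection $\tilde\Pi$ is a PPP with intensity $\tilde\lambda(y)\ud\eta(y)$ for $\tilde\lambda(y)=t\,\E_\pi[c+\Delta\epsilon^* X\mid Y=y]$. I then independently couple $\tilde\Pi$ with $\Pi^\eta_t$ via the standard common-part coupling, whose symmetric difference has expected size $\int|\tilde\lambda(y)-t(c+\Delta\epsilon^* y)|\ud\eta(y)\leq t|\Delta|W(\nu,\eta)$ by Jensen's inequality applied to the conditional expectation. Going from $\Pi^\nu_t$ to $\Pi^\eta_t$ under this coupling then consists of shifting each $X_i$ to $Y_i$ for pairs $(X_i,Y_i)\in\Xi$ with $Y_i$ in the common part, and adding or removing the unmatched points. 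Combining the Lipschitz estimate on $F$ with Campbell's formula bounds the expected total change of $F$ by a constant multiple of $tW(\nu,\eta)$ uniformly in $\epsilon^*$, which after averaging over $\epsilon^*\sim P^*$ yields \eqref{e.t.small.FP.operator.contraction}.

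For \eqref{e.t.small.FP.operator.Lip}, I decompose the difference of intensities as $(c+\Delta\epsilon^* x)\bigl[m_1\ud(\bar\mu-\bar\mu')+(m_1-m_2)\ud\bar\mu'+(t-t')\ud\nu\bigr]$ with $m_1=\mu[-1,1]$ and $m_2=\mu'[-1,1]$, and treat the three contributions independently. The $(t-t')\nu$ and $(m_1-m_2)\bar\mu'$ contributions are handled by the standard common-part coupling of independent PPPs, producing symmetric differences of expected size $|t-t'|(c+|\Delta|)$ and $|m_1-m_2|(c+|\Delta|)$, which by the Lipschitz estimate on $F$ give contributions of the desired order. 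The $m_1(\bar\mu-\bar\mu')$ part is treated by exactly the optimal-transport coupling of the previous paragraph, applied between $\bar\mu$ and $\bar\mu'$ with total-mass prefactor $m_1$ in place of $t$, producing a contribution of order $m_1 W(\bar\mu,\bar\mu')$. Summing yields \eqref{e.t.small.FP.operator.Lip}. The main technical difficulty in both bounds is obtaining the dependence on Wasserstein rather than total-variation distance: a naive decomposition of the intensities into common and asymmetric parts would only give the weaker bound $t(c+|\Delta|)\|\nu-\eta\|_{\mathrm{TV}}$, and the upgrade to $tW(\nu,\eta)$ relies essentially on coupling the two PPPs through the product-space PPP $\Xi$ built from the optimal transport plan, with the Jensen-based control of the weight mismatch $\tilde\lambda-t(c+\Delta\epsilon^*\cdot)$ providing the remaining lower-order correction.
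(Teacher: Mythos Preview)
Your proof is correct but follows a genuinely different route from the paper's. The paper represents each Poisson point process through a Poisson number of i.i.d.\ draws from the \emph{normalized tilted} measures $\nu',\eta'$ (with densities proportional to $(c+\Delta\epsilon^* x)$), linearly interpolates the point positions $X_k\mapsto X_k+u(Y_k-X_k)$, differentiates the resulting $Z_u$ in $u$, and bounds the derivative pointwise. This yields $W(\Gamma_{t,\mu}(\nu),\Gamma_{t,\mu}(\eta))\le Ct\,W(\nu',\eta')$, after which a separate step---an explicit CDF computation using the one-dimensional representation \eqref{e.Wasserstein.1D}---is needed to show $W(\nu',\eta')\le C'W(\nu,\eta)$. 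Part~2 in the paper follows the same interpolation template.

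Your argument instead exploits the closed form $F(\Pi)=\tanh(u(\Pi))$ to obtain per-point Lipschitz and addition bounds on $F$, and then constructs the coupling by lifting the optimal transport plan $\pi$ between $\nu$ and $\eta$ to a Poisson point process on $[-1,1]^2$. This delivers $W(\nu,\eta)$ directly, with the tilting mismatch (the fact that the intensity carries $(c+\Delta\epsilon^* x)$ rather than $(c+\Delta\epsilon^* y)$) absorbed by the common-part correction and the Jensen estimate $\int|\E_\pi[X-Y\mid Y=y]|\,\eta(\ud y)\le W(\nu,\eta)$. The trade-off: your method is shorter here because the $\tanh$ representation makes the per-point estimates immediate and avoids the paper's auxiliary lemma relating $W(\nu',\eta')$ to $W(\nu,\eta)$; the paper's interpolation-and-differentiate scheme is closer to standard spin-glass cavity computations and would transfer more readily to priors $P^*$ not supported on $\{-1,+1\}$, where no scalar $\tanh$ form is available.
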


\begin{proof}
It will be convenient to remember the dual representation \eqref{e.Wasserstein.def.inf} of the Wasserstein distance. With this representation in mind, we prove each bound separately. 
\begin{enumerate}
\item Fix $(t,\mu)\in \Rp\times \M_+$ as well as $\nu,\eta\in \M_p$. 
Conditionally on the ground truth signal $\epsilon^*$ sampled from $P^*$, introduce the measures
\begin{equation}\label{e.t.small.tilde.nu}
\ud \tilde{\nu}(x):=(c+\Delta \epsilon^*x)\ud \nu(x) \quad \text{and} \quad \ud \tilde{\eta}(x):=(c+\Delta \epsilon^*x)\ud \eta(x).
\end{equation}
We denote by $(X_k)_{k\geq 1}$ and $(Y_k)_{k \geq 1}$ i.i.d.\@ samples from the probability measures $\nu'$ and $\eta'$ induced by $\tilde{\nu}$ and $\tilde{\eta}$, and let $\Pi_t$ be a Poisson random variable with mean $t(c+\Delta \epsilon^*\m)$. For each $u\in [0,1]$, we define the interpolating Hamiltonian $H_u$ on $\Sigma_1$ by
\begin{equation*}
H_u(\epsilon):=-\epsilon \m +\sum_{x\in \Pi_{0,\mu}(0)}\log(c+\Delta \epsilon x)+\sum_{k\leq \Pi_t} \log\big(c+\Delta \epsilon (X_k+u(Y_k-X_k))\big)
\end{equation*}
as well as the random variable
\begin{equation*}
Z_u:=\frac{\int_{\Sigma_1} \epsilon \exp H_u(\epsilon)\ud P^*(\epsilon)}{\int_{\Sigma_1} \exp H_u(\epsilon)\ud P^*(\epsilon)}.
\end{equation*}
The superposition principle ensures that $\smash{\Gamma_{t,\mu}(\nu)\stackrel{d}{=}Z_0}$ and $\smash{\Gamma_{t,\mu}(\eta)\stackrel{d}{=}Z_1}$. If we write $\langle \cdot\rangle_u$ for the Gibbs measure associated with the Hamiltonian $H_u$, then
\begin{align*}
\frac{\ud}{\ud u} Z_u 
&= \sum_{k\leq \Pi_t}(Y_k-X_k)\bigg(\bigg\langle \frac{\epsilon}{c+\Delta \epsilon (X_k+u(Y_k-X_k))}\bigg\rangle_u - \langle \epsilon\rangle_u \bigg\langle \frac{1}{c+\Delta \epsilon (X_k+u(Y_k-X_k))}\bigg\rangle_u\bigg).
\end{align*}
It follows by the mean value theorem that for some constant $C<+\infty$ that depends only on $c$ and $\Delta$ whose value may not be the same at each occurrence,
\begin{equation*}
\E\abs{Z_1-Z_0}\leq C\E\sum_{k\leq \Pi_t}\abs{Y_k-X_k}\leq tC\E \abs{X_1-Y_1}.
\end{equation*}
Taking the infimum over all couplings between $(X_k)_{k\geq 1}$ and $(Y_k)_{k\geq 1}$ yields
\begin{equation}\label{e.t.small.unique.FP.bound.1}
W\big(\Gamma_{t,0}(\nu),\Gamma_{t,0}(\eta)\big)=W(Z_0,Z_1)\leq tCW\big(\nu',\eta'\big),
\end{equation}
where we recall that $\nu'$ and $\eta'$ denote the probability measures induced by $\tilde{\nu}$ and $\tilde{\eta}$. We now control the distance between the measures $\nu'$ and $\eta'$ by that between the measures $\nu$ and $\eta$. The explicit representation \eqref{e.Wasserstein.1D} of the one-dimensional Wasserstein distance reveals that
\begin{equation*}
W\big(\nu', \eta'\big)=\int_{-1}^1 \abs{F_{\nu'}(x)-F_{\eta'}(x)}\ud x,
\end{equation*}
and integrating by parts shows that for $x\in [-1,1]$,
\begin{equation*}
F_{\nu'}(x)
=\frac{1}{c+\Delta \epsilon^*\m}\bigg(cF_\nu(x)+\Delta \epsilon^* \Big(xF_\nu(x)-\int_{-1}^x F_\nu(z)\ud z\Big)\bigg).
\end{equation*}
It follows that for $x\in [-1,1]$,
\begin{equation*}
\abs{F_{\nu'}(x)-F_{\eta'}(x)}\ud x\leq C\big(\abs{F_\nu(x)-F_\eta(x)}+W(\nu,\eta)\big),
\end{equation*}
and therefore
\begin{equation*}
W\big(\nu',\eta'\big)\leq CW(\nu,\eta).
\end{equation*}
Together with \eqref{e.t.small.unique.FP.bound.1} this establishes the Lipschitz bound \eqref{e.t.small.FP.operator.contraction}.

\item We fix $(t,\mu), (t',\mu')\in \Rp\times \M_+$ as well as $\nu \in \M_p$. To simplify notation, we denote by $\tilde{\mu}$, $\tilde{\mu}'$ and $\tilde{\nu}$ the measures defined as in \eqref{e.t.small.tilde.nu}, and let $\tilde{s}:=\tilde{\mu}[-1,1]$ and $\tilde{s}':=\tilde{\mu}'[-1,1]$. Up to interchanging the roles of $t$ and $t'$ or $\mu$ and $\mu'$, assume without loss of generality that $t\leq t'$ and $\tilde{s}\leq \tilde{s}'$. For any measure $\eta\in \M_+$ write $\Lambda(\eta)$ for the Poisson point process with intensity measure $\eta$. Through a slight abuse of notation, we denote by $(X_k)_{k\geq 1}$ and $(Y_k)_{k\geq 1}$ i.i.d.\@ samples from the probability measures induced by $\tilde{\mu}$ and $\tilde{\mu}'$, and let $\Pi_{\tilde{s}}$ and $\Pi_{\tilde{s}'-\tilde{s}}$ be Poisson random variables with means $\tilde{s}$ and $\tilde{s}'$, respectively. Through another abuse of notation, for each $u\in [0,1]$, we define the interpolating Hamiltonian $H_u$ on $\Sigma_1$ by
\begin{align*}
H_u(\epsilon):=-\epsilon \m &+ \sum_{x\in \Lambda(t\tilde{\nu})} \log(c+\Delta \epsilon x)+u\sum_{x\in \Lambda((t'-t)\tilde{\nu})}\log(c+\Delta \epsilon x)\\
&+\sum_{k\leq \Pi_{\tilde{s}}}\log\big(c+\Delta \epsilon (X_k+u(Y_k-X_k))\big)+ u\sum_{k\leq \Pi_{\tilde{s}'-\tilde{s}}}\log(c+\Delta \epsilon Y_k)
\end{align*}
as well as the random variable
\begin{equation*}
Z_u:=\frac{\int_{\Sigma_1} \epsilon \exp H_u(\epsilon)\ud P^*(\epsilon)}{\int_{\Sigma_1} \exp H_u(\epsilon)\ud P^*(\epsilon)}.
\end{equation*}
We have implicitly used that $t\leq t'$ and $\tilde{s}\leq \tilde{s}'$ to define $H_u$. The superposition principle ensures that $\smash{\Gamma_{t,\mu}(\nu)\stackrel{d}{=}Z_0}$ and $\smash{\Gamma_{t',\mu'}(\nu)\stackrel{d}{=}Z_1}$. If we write $\langle \cdot\rangle_u$ for the Gibbs measure associated with the Hamiltonian $H_u$, then
\begin{align*}
\frac{\ud}{\ud u} Z_u 
=& \sum_{k\leq \Pi_{\tilde{s}}} (Y_k-X_k) \bigg(\bigg\langle \frac{\epsilon}{c+\Delta \epsilon (X_k+u(Y_k-X_k))}\bigg\rangle_u - \langle \epsilon\rangle_u \bigg\langle \frac{1}{c+\Delta \epsilon (X_k+u(Y_k-X_k))}\bigg\rangle_u\bigg)\\
&\qquad+\sum_{x\in \Lambda((t'-t)\tilde{\nu})} \big(\langle \epsilon \log(c+\Delta \epsilon x)\rangle_u-\langle \epsilon \rangle_u\langle \log(c+\Delta \epsilon x)\rangle_u\big)\\
&\qquad\qquad+\sum_{k\leq \Pi_{\tilde{s}'-\tilde{s}}} \big(\langle \epsilon \log(c+\Delta \epsilon Y_k)\rangle_u-\langle \epsilon \rangle_u\langle \log(c+\Delta \epsilon Y_k)\rangle_u\big).
\end{align*}
It follows by the mean value theorem that for some constant $C<+\infty$ that depends only on $c$ and $\Delta$ whose value may not be the same at each occurrence,
\begin{align*}
\E\abs{Z_1-Z_0}&\leq C\bigg(\E\sum_{k\leq \Pi_{\tilde{s}}}\abs{Y_k-X_k} + \E\abs{\Lambda((t'-t)\tilde{\nu})} + \E \Pi_{\tilde{s}'-\tilde{s}}\bigg)\\
&\leq C\big(\mu[-1,1] \E\abs{Y_1-X_1}+\abs{t'-t}+\abs{\mu'[-1,1]-\mu[-1,1]}\big).
\end{align*}
Taking the infimum over all couplings between $(X_k)_{k\geq 1}$ and $(Y_k)_{k\geq 1}$ completes the proof. \qedhere
\end{enumerate}
\end{proof}

\begin{lemma}\label{l.t.small.unique.FP}
There exists a constant $C<+\infty$ such that for all $t< C^{-1}$ and $\mu \in \M_+$, the fixed point operator $\Gamma_{t,\mu}:\M_p\to \M_p$ defined in \eqref{e.intro.SBM.FP.operator} admits a unique fixed point $\nu_{t,\mu}\in \M_p$. Moreover, for all $(t,\mu), (t',\mu') \in \Rp\times \M_+$ with $t,t'< C^{-1}$, the fixed point map satisfies the Lipschitz bound
\begin{equation}
W(\nu_{t,\mu}, \nu_{t',\mu'})\leq C\big(\abs{t'-t}+\mu[-1,1]W\big(\bar \mu, \bar \mu'\big)+\abs{\mu[-1,1]-\mu'[-1,1]}\big).
\end{equation}
\end{lemma}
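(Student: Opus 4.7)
The plan is to apply the Banach fixed point theorem on the complete metric space $(\M_p, W)$, using the contraction bound \eqref{e.t.small.FP.operator.contraction} to obtain existence and uniqueness, and then to combine the two bounds from Lemma~\ref{l.t.small.FP.operator.props} to deduce the Lipschitz estimate.

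First I would verify that $(\M_p, W)$ is a complete metric space. Since $\Pr[-1,1]$ is compact under weak convergence, and since weak convergence on $\Pr[-1,1]$ is metrized by the Wasserstein distance~$W$, the space $(\Pr[-1,1], W)$ is complete. The set $\M_p$ is characterized by the linear constraint $\int x \ud \nu(x) = \m$, and since the function $x\mapsto x$ is bounded and continuous on $[-1,1]$, the map $\nu \mapsto \int x \ud \nu(x)$ is continuous with respect to weak convergence. Hence $\M_p$ is a closed subset of $\Pr[-1,1]$, and therefore complete under the Wasserstein distance. That $\Gamma_{t,\mu}$ maps $\M_p$ into itself is a consequence of the Nishimori identity applied to the posterior expectation defining \eqref{e.intro.SBM.FP.operator}, which forces the mean of $\Gamma_{t,\mu}(\nu)$ to equal $\E \epsilon^* = \m$.

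Let $C_0 < +\infty$ be the constant provided by Lemma~\ref{l.t.small.FP.operator.props}. Set $C := 2C_0$ and fix $t < C^{-1}$, so that $C_0 t < 1/2$. Then for every $\mu \in \M_+$, the bound \eqref{e.t.small.FP.operator.contraction} shows that $\Gamma_{t,\mu} : \M_p \to \M_p$ is a strict contraction with Lipschitz constant at most $1/2$. By the Banach fixed point theorem, $\Gamma_{t,\mu}$ admits a unique fixed point $\nu_{t,\mu} \in \M_p$, which proves the first assertion.

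For the Lipschitz estimate, fix $t, t' < C^{-1}$ and $\mu, \mu' \in \M_+$. Using that $\nu_{t,\mu}$ and $\nu_{t',\mu'}$ are fixed points, the triangle inequality combined with the two bounds in Lemma~\ref{l.t.small.FP.operator.props} gives
\begin{align*}
W(\nu_{t,\mu}, \nu_{t',\mu'}) &= W\bigl(\Gamma_{t,\mu}(\nu_{t,\mu}), \Gamma_{t',\mu'}(\nu_{t',\mu'})\bigr) \\
&\leq W\bigl(\Gamma_{t,\mu}(\nu_{t,\mu}), \Gamma_{t,\mu}(\nu_{t',\mu'})\bigr) + W\bigl(\Gamma_{t,\mu}(\nu_{t',\mu'}), \Gamma_{t',\mu'}(\nu_{t',\mu'})\bigr) \\
&\leq C_0 t \, W(\nu_{t,\mu}, \nu_{t',\mu'}) + C_0 \bigl(\abs{t'-t} + \mu[-1,1] W(\bar \mu, \bar \mu') + \abs{\mu[-1,1] - \mu'[-1,1]}\bigr).
\end{align*}
Since $C_0 t < 1/2$, I can absorb the first term on the right-hand side into the left-hand side and deduce
\begin{equation*}
W(\nu_{t,\mu}, \nu_{t',\mu'}) \leq 2C_0 \bigl(\abs{t'-t} + \mu[-1,1] W(\bar \mu, \bar \mu') + \abs{\mu[-1,1] - \mu'[-1,1]}\bigr),
\end{equation*}
which is the desired bound with constant $C = 2C_0$. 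The argument is essentially routine once Lemma~\ref{l.t.small.FP.operator.props} is in hand; the only subtle point is ensuring that $(\M_p, W)$ is complete and is preserved by $\Gamma_{t,\mu}$, which follows from the Nishimori identity and the linearity of the mean constraint.
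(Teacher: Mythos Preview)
Your proof is correct and follows essentially the same approach as the paper: both invoke the Banach fixed point theorem using the contraction bound \eqref{e.t.small.FP.operator.contraction}, and both derive the Lipschitz estimate by combining the two bounds of Lemma~\ref{l.t.small.FP.operator.props} with an absorption argument (the paper phrases this via the standard rate bound $W(\nu_{t,\mu},\nu^0)\le \frac{1}{1-L}W(\Gamma_{t,\mu}(\nu^0),\nu^0)$ applied with $\nu^0=\nu_{t',\mu'}$, which is exactly your triangle-inequality-and-absorb step). Your additional verification that $(\M_p,W)$ is complete and preserved by $\Gamma_{t,\mu}$ is a welcome detail that the paper leaves implicit.
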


\begin{proof}
Let $C<+\infty$ be as in the statement of Lemma \ref{l.t.small.FP.operator.props}, and fix $L\in (0,1)$. For $t<LC^{-1}$, the Lipschitz bound \eqref{e.t.small.FP.operator.contraction} and the Banach fixed point theorem ensure that $\Gamma_{t,\mu}$ admits a unique fixed point $\nu_{t,\mu}\in \M_p$. Moreover, for any initial measure $\nu^0\in \M_p$, the Banach fixed point theorem implies that the sequence $(\nu_{t,\mu}^n)_{n\geq 1}$ defined iteratively by
\begin{equation*}
\nu_{t,\mu}^0 := \nu^0 \qquad \text{and} \qquad \nu_{t,\mu}^{n+1} :=\Gamma_{t,\mu}(\nu_{t,\mu}^n) \text{ for }n\geq 0
\end{equation*}
converges weakly to $\nu_{t,\mu}\in \M_p$, and satisfies the rate bound
\begin{equation*}
W\big(\nu_{t,\mu},\nu^0\big)\leq \frac{1}{1-L}W\big(\nu_{t,\mu}^1,\nu^0\big)=\frac{1}{1-L}W\big(\Gamma_{t,\mu}(\nu^0),\nu^0\big).
\end{equation*}
Applying this with $\nu^0:=\nu_{t',\mu'}$ and recalling that this is the fixed point of $\Gamma_{t',\mu'}$ reveals that
\begin{equation*}
W(\nu_{t,\mu}, \nu_{t',\mu'})\leq \frac{1}{1-L}W\big(\Gamma_{t,\mu}(\nu_{t',\mu'}),\nu_{t',\mu'}\big)=\frac{1}{1-L}W\big(\Gamma_{t,\mu}(\nu_{t',\mu'}),\Gamma_{t',\mu'}(\nu_{t',\mu'})\big)
\end{equation*}
Invoking the Lipschitz bound \eqref{e.t.small.FP.operator.Lip} and redefining the constant $C<+\infty$ completes the proof.
\end{proof}

\begin{proof}[Proof of Proposition \ref{p.intro.main.small.t}]
The Arzelà-Ascoli theorem gives a subsequence $(N_k)_{k\geq 1}$ along which the free energy converges to some limit $f:\Rp\times \M_+\to \R$. With the notation $\nu_{t,\mu}$ as in Lemma~\ref{l.t.small.unique.FP}, we first observe that if $f$ is Gateaux differentiable jointly in $(t,\mu)$ at some $(t,\mu) \in (0,C^{-1}) \times \mathrm{Reg}(\M_+)$, then one must have
\begin{equation}  
\label{e.Dmu.fixedpoint}
D_\mu f(t,\mu,\cdot) = G_{\nu_{t,\mu}}
\end{equation}
and
\begin{equation}  
\label{e.drt.fixedpoint}
\partial_t f(t,\mu) = \frac 1 2 \int_{-1}^1 G_{\nu_{t,\mu}} \, \ud \nu_{t,\mu}. 
\end{equation}
Indeed, the relation \eqref{e.Dmu.fixedpoint} follows from Lemmas~\ref{l.GD.of.limit.FE.as.GD.of.IC}, \ref{l.equivalence.of.FP.eqns}, and the uniqueness of the fixed point from Lemma~\ref{l.t.small.unique.FP}, while \eqref{e.drt.fixedpoint} follows from the explicit representation of $\partial_t \bar F_{N}$ in \eqref{e.SBM.en.FE.der.t}, Lemmas~\ref{l.limit.GD.measure.is.generator}, \ref{l.GD.of.limit.FE.as.GD.of.IC}, and \ref{l.equivalence.of.FP.eqns} which identify the limit law of the overlap array, and Proposition~\ref{p.regularity.cont.of.derivative} which ensures the convergence of $\partial_t \bar F_{N_k}(t,\mu)$ to $\partial_t f(t,\mu)$. 

Let us now fix $(t,\mu) \in (0,C^{-1})\times \M_+$, and for each $s \in \R$ sufficiently small, let $(t_s,\mu_s) \in \Rpp \times \mathrm{reg}(\M_+)$ be a point of joint Gateaux differentiability of $f$ such that 
\begin{equation*}  
|t - t_s| + |F_{\mu} - F_{\mu_s}|_{L^2} \le s^2.
\end{equation*}
Such $(t_s, \mu_s)$ exists since the set of points of differentiability of $f$ is dense. By Proposition~\ref{p.regularity.Lipschitz}, we have that as $s > 0$ tends to zero,
\begin{align*}  
s^{-1}(f(t+s,\mu) - f(t,\mu)) 
& = s^{-1} (f(t_s+ s,\mu_s) - f(t_s, \mu_s))  + O(s) \\
& \ge \frac 1 2 \int_{-1}^1 G_{\nu_{t_s,\mu_s}} \, \ud \nu_{t_s,\mu_s} + O(s),
\end{align*}
where we used the convexity of $f$ in the $t$ variable from Proposition~\ref{p.regularity.en.FE.convex} and \eqref{e.drt.fixedpoint} to obtain the last line. The continuity of the mapping $(t,\mu) \mapsto \nu_{t,\mu}$ from Lemma~\ref{l.t.small.unique.FP} therefore implies that 
\begin{equation*}  
\liminf_{s \downarrow 0} s^{-1}(f(t+s,\mu) - f(t,\mu)) \ge \frac 1 2 \int_{-1}^1 G_{\nu_{t,\mu}} \, \ud \nu_{t,\mu}.
\end{equation*}
In order to obtain the converse bound, we instead select $(t_s', \mu_s') \in \Rpp \times \mathrm{Reg}(\M_+)$ such that $(t_s' + s, \mu_s')$ is a point of differentiability of $f$ and 
\begin{equation*}  
|t - t'_s| + |F_{\mu} - F_{\mu'_s}|_{L^2} \le s^2.
\end{equation*}
Using the Lipschitz continuity and the convexity of $f$ once more, we can then write
\begin{align*}  
s^{-1}(f(t+s,\mu) - f(t,\mu)) 
& = s^{-1} (f(t'_s+ s,\mu'_s) - f(t'_s, \mu'_s))  + O(s) \\
& \le \frac 1 2 \int_{-1}^1 G_{\nu_{t_s'+s,\mu'_s}} \, \ud \nu_{t_s'+s,\mu'_s} + O(s).
\end{align*}
Using again the continuity of the map $(t,\mu) \mapsto \nu_{t,\mu}$, we therefore conclude that 
\begin{equation*}  
\lim_{s \downarrow 0} s^{-1}(f(t+s,\mu) - f(t,\mu)) = \frac 1 2 \int_{-1}^1 G_{\nu_{t,\mu}} \, \ud \nu_{t,\mu}.
\end{equation*}
The limit as $s$ tends to zero from the left can be handled similarly. We have thus shown that $f$ is differentiable in time everywhere on $(0,C^{-1}) \times \M_+$, with derivative given by \eqref{e.drt.fixedpoint}. Recalling also from \cite[Lemma~3.1]{dominguez2024mutual} that $f(0,\cdot) = \Psi$, we conclude that the function $f$ is determined uniquely as 
\begin{equation*}  
f(t,\mu) = \Psi(\mu) + \frac 1 2 \int_0^t \int_{-1}^1 G_{\nu_{s,\mu}} \, \ud \nu_{s,\mu} \, \ud s.
\end{equation*}
In particular, the identity of $f$ does not depend on the subsequence, and we can thus conclude that $\bar F_N$ in fact converges along the full sequence. Proposition~\ref{p.intro.main.small.t} then follows from Theorem~\ref{t.intro.main} and the uniqueness of the fixed point on $(0,C^{-1}) \times \M_+$. 
\end{proof}

We close this section by showing that the limit free energy is differentiable about the origin. The generalization of this result to the bipartite version of the stochastic block model will be used in Proposition \ref{p.bipartite.no.var.formula} to argue that the generalization of the variational formula \eqref{e.intro.main.disassortative} does not hold in this setting. 

\begin{corollary}\label{c.SBM.limit.FE.diff.at.0}
If $C<+\infty$ is as in the statement of Proposition~\ref{p.intro.main.small.t}, and $f:[0,C^{-1}]\times \M_+\to \R$ denotes the limit of the sequence of enriched free energies $(\bar F_N)_{N\geq 1}$ on $[0,C^{-1}]\times \M_+$, then the map $t\mapsto f(t,0)$ is differentiable at $t=0$.
\end{corollary}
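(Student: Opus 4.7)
The strategy is to exploit the explicit representation $f(t,0) = \Par_{t,0}(\nu_{t,0})$ afforded by Proposition \ref{p.intro.main.small.t}, where $\nu_{t,0}$ is the unique fixed point of $\Gamma_{t,0}$ in $\M_p$, and then pass to the limit $t \to 0^+$ in the associated difference quotient.

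A preliminary observation is that, since $\Pi_\pm(0)$ is almost surely empty and $\int x\,\mathrm{d}(0) = 0$, the definition \eqref{e.SBM.IC} gives $\psi(0) = 0$. Consequently $f(0,0) = \Par_{0,0}(\nu_{0,0}) = 0$, and for every $t \in (0, C^{-1})$ the representation \eqref{e.intro.HJ.functional} yields
\begin{equation*}
\frac{f(t,0) - f(0,0)}{t} = \frac{\psi(t\nu_{t,0})}{t} - \frac{1}{2}\int_{-1}^1 G_{\nu_{t,0}}(y)\,\mathrm{d}\nu_{t,0}(y).
\end{equation*}
By Lemma \ref{l.t.small.unique.FP}, the map $t \mapsto \nu_{t,0}$ is Lipschitz continuous in the Wasserstein distance, so $\nu_{t,0} \to \nu_{0,0}$ weakly as $t \to 0^+$. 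Since $g$ is smooth on $[-1,1]$, this weak convergence implies $\int G_{\nu_{t,0}}\,\mathrm{d}\nu_{t,0} \to \int G_{\nu_{0,0}}\,\mathrm{d}\nu_{0,0}$, which takes care of the second term.

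To handle the first term, I would split
\begin{equation*}
\frac{\psi(t\nu_{t,0})}{t} = \frac{\psi(t\nu_{0,0})}{t} + \frac{\psi(t\nu_{t,0}) - \psi(t\nu_{0,0})}{t}.
\end{equation*}
The first summand tends to $D_\mu\psi(0;\nu_{0,0}) = \int_{-1}^1 D_\mu\psi(0,x)\,\mathrm{d}\nu_{0,0}(x)$ by the Gateaux differentiability of $\psi$ at $0$, which is provided by formula \eqref{e.intro.IC.Gateaux.der} and reduces at $\mu=0$ to the explicit expression $D_\mu\psi(0,x) = (c+\Delta\m x)\log(c+\Delta\m x) - c - \Delta\m x$, since the bracket $\langle\cdot\rangle_*$ collapses to integration against $P^*$ when the Poisson process is empty. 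The second summand is controlled by Proposition \ref{p.regularity.Lipschitz} applied at $t = t' = 0$ and passed to the $N\to+\infty$ limit, using that $\psi = \lim_N \bar F_N(0,\cdot)$, which yields
\begin{equation*}
\bigl\lvert \psi(t\nu_{t,0}) - \psi(t\nu_{0,0}) \bigr\rvert \leq C\, t\, \bigl\lvert F_{\nu_{t,0}} - F_{\nu_{0,0}} \bigr\rvert_{L^2[-1,1]}.
\end{equation*}
Since CDFs on $[-1,1]$ take values in $[0,1]$, the elementary pointwise bound $\lvert F - F'\rvert^2 \leq \lvert F - F'\rvert$ combined with \eqref{e.Wasserstein.1D} gives $\lvert F_{\nu_{t,0}} - F_{\nu_{0,0}} \rvert_{L^2}^2 \leq W(\nu_{t,0}, \nu_{0,0})$, and this vanishes as $t \to 0^+$ by Lemma \ref{l.t.small.unique.FP}. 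Dividing by $t$, the remainder is therefore $o(1)$.

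Combining these three convergences yields
\begin{equation*}
\lim_{t \to 0^+} \frac{f(t,0) - f(0,0)}{t} = \int_{-1}^1 D_\mu\psi(0,x)\,\mathrm{d}\nu_{0,0}(x) - \frac{1}{2}\int_{-1}^1 G_{\nu_{0,0}}(y)\,\mathrm{d}\nu_{0,0}(y),
\end{equation*}
which is precisely the required right-differentiability of $t \mapsto f(t,0)$ at $0$. The only delicate point is the Gateaux differentiability of $\psi$ at the boundary point $\mu = 0$, but this is already granted by \eqref{e.intro.IC.Gateaux.der}; the remainder of the argument is bookkeeping via the Lipschitz bounds established in Section \ref{sec:regularity} and the fixed-point Lipschitz continuity of Section \ref{sec:special.cases}.
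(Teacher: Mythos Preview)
Your proof is correct and follows essentially the same strategy as the paper: both exploit the representation $f(t,0)=\Par_{t,0}(\nu_{t,0})$, the Lipschitz continuity of $t\mapsto\nu_{t,0}$ from Lemma~\ref{l.t.small.unique.FP}, and the Gateaux differentiability of $\psi$ at $0$, and arrive at the same derivative formula. The only organizational difference is that the paper groups the increment as $[\Par_{t,0}(\nu_0)-\Par_{0,0}(\nu_0)]+[\Par_{t,0}(\nu_t)-\Par_{t,0}(\nu_0)]$ and bounds the second bracket by $O(t^2)$ via the fundamental theorem of calculus applied to $u\mapsto\Par_{t,0}((1-u)\nu_0+u\nu_t)$, whereas you expand $\Par_{t,0}(\nu_{t,0})$ directly into its $\psi$ and $G$ components and handle each via Proposition~\ref{p.regularity.Lipschitz} and weak convergence respectively; this is a minor bookkeeping variation with no substantive difference.
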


\begin{proof}
We fix $t<C^{-1}$, and denote by $\nu_t\in \M_p$ the unique fixed point of the operator $\Gamma_{t,0}$. Observe that 
\begin{equation}\label{e.limit.FE.diff.0.key}
f(t,0)-f(0,0)=\Par_{t,0}(\nu_0)-\Par_{0,0}(\nu_0)+\Par_{t,0}(\nu_t)-\Par_{t,0}(\nu_0).
\end{equation}
On the one hand,
\begin{equation}\label{e.limit.FE.diff.0.t1}
\Par_{t,0}(\nu_0)-\Par_{0,0}(\nu_0)=\psi(t\nu_0)-\psi(0)-\frac{t}{2}\int_{-1}^1 G_{\nu_0}(y)\ud \nu_0(y).
\end{equation}
On the other hand, if $\nu_{u,t}:=(1-u)\nu_0+u\nu_t$, then the fundamental theorem of calculus implies that
\begin{align*}
\Par_{t,0}(\nu_t)-\Par_{t,0}(\nu_0)&=\int_0^1 D_\mu\Par_{t,0}(\nu_{u,t}; \nu_t-\nu_0)\ud u=\int_0^1 \int_{-1}^1 D_\mu\Par_{t,0}(\nu_{u,t},x)\ud (\nu_t-\nu_0)(x)\ud u.
\end{align*}
A direct computation reveals that for all $u\in [0,1]$ and $x\in [-1,1]$,
\begin{equation*}
D_\mu\Par_{t,0}(\nu_{u,t},x)=t\big(D_\mu\psi(t\nu_{u,t},x)-G_{\nu_{u,t}}(x)\big).
\end{equation*}
Together with the mean value theorem and the derivative bound on the density of the initial condition's Gateaux derivative implied by \eqref{e.SBM.extended.FE.Lip.der.bound}, this means that there exists a constant $L>0$ such that the map $f_{u,t}(x):= D_\mu\Par_{t,0}(\nu_{u,t},x)$ is Lipschitz continuous with $\norm{f_{u,t}}_{\mathrm{Lip}}\leq Lt$. It follows by definition of the Wasserstein distance \eqref{e.Wasserstein.def} and the Lipschitz continuity of the map $t\mapsto \nu_t$ established in Lemma \ref{l.t.small.unique.FP} that, up to redefining the constant $L$,
\begin{equation*}
\abs{\Par_{t,0}(\nu_t)-\Par_{t,0}(\nu_0)}\leq Lt^2.
\end{equation*}
Together with \eqref{e.limit.FE.diff.0.key}-\eqref{e.limit.FE.diff.0.t1}, this implies that the map $t\mapsto f(t,0)$ is differentiable at the origin with
\begin{equation*}
\partial_tf(0,0)=D_\mu\psi(0;\nu_0)-\frac{1}{2}\int_{-1}^1 G_{\nu_0}(y)\ud \nu_0(y).
\end{equation*}
This completes the proof.
\end{proof}

\section{Critical point selection}
\label{sec:CP_sel}

In this section, we argue that we do not expect the variational formula
\begin{equation}\label{e.CP_sel.sup.candidate}
\lim_{N\to +\infty}\bar F_N(t,\mu)=\sup_{\nu \in \M_p}\Par_{t,\mu}(\nu)
\end{equation}
to hold for the general sparse stochastic block model. More precisely, we will argue that it does not generalize to a bipartite version of the stochastic block model which we now describe. 

To motivate the definition of the bipartite version of the stochastic block model, consider a group of $N$ students indexed by the elements $\{1,\ldots,N\}$ and $N$ tutors also indexed by the elements $\{1,\ldots,N\}$. Each individual is assigned to an in-person group or a virtual group, depending on whether they prefer in-person or virtual classes. The idea is to match students preferring in-person instruction with tutors willing to deliver it, and similarly for students preferring virtual instruction. An allocation of the $2N$ students and tutors into the two groups can be identified with a vector
\begin{equation}
(x,y):=((x_1,x_2,\ldots,x_N), (y_1,y_2,\ldots,y_N))\in \Sigma_N^2,
\end{equation}
where $x \in \Sigma_N$ encodes the allocation of students, $y\in \Sigma_N$ denotes the allocation of tutors, the label $+1$ represents the group preferring in-person instruction, and the label $-1$ represents the group preferring virtual instruction. We will assume that the labels $X_i\sim P^X$ and $Y_i\sim P^Y$ are taken to be i.i.d.\@ from two probability measures $P^X$ and $P^Y$ on $\Sigma_1$. This means that the vector $(X, Y)$ is sampled from the product distribution
\begin{equation}
P_N^X\otimes P_N^Y:=(P^X)^{\otimes N}\otimes (P^Y)^{\otimes N}.
\end{equation}
Using the assignment vector $(X,Y)$, a random undirected bipartite graph $\mathbf{G}_N:=(G_{ij})_{i,j\leq N}$ with vertex sets $\{1,\ldots,N\}$ and $\{1,\ldots,N\}$ is constructed by stipulating that an edge between nodes $i,j\in \{1,\ldots,N\}$ in different groups is present with conditional probability
\begin{equation}\label{e.CP_sel.bip.G.prob.aN.bN}
\P\{G_{i,j}=1 \mid X, Y\}:=
\frac{c+\Delta X_iY_j}{N},
\end{equation}
where $c > 0$ and $\Delta \in (-c,c) \setminus \{0\}$ are fixed parameters. 
%
%
The likelihood of the model is given by
\begin{equation}
\P\big\{\mathbf{G}_N=(G_{ij})_{i,j\leq N} \mid X=x, Y=y\big\}=\prod_{i,j=1}^N \Big(\frac{c+\Delta X_iY_j}{N}\Big)^{G_{ij}}\Big(1-\frac{c+\Delta X_iY_j}{N}\Big)^{1-G_{ij}},
\end{equation}
so Bayes' formula implies that the posterior of the model is the Gibbs measure
\begin{equation}
\P\big\{X=x,Y=y \mid \mathbf{G}_N=(G_{ij})_{i,j\leq N}\big\}=\frac{\exp H_N(x,y)}{\int_{\Sigma_N^2} \exp H_N(x',y')\ud P_N^X(x')\ud P_N^Y(y')}
\end{equation}
associated with the Hamiltonian on $\Sigma_N^2$ defined by
\begin{equation}\label{e.CP_sel.bip.H}
H_N^\circ(x,y):=\sum_{i,j=1}^N \log\bigg[(c+{\Delta x_iy_j})^{G_{ij}}\Big(1-\frac{c +\Delta x_iy_j}{N}\Big)^{1-G_{ij}}\bigg].
\end{equation}
The free energy corresponding to this Hamiltonian is
\begin{equation}\label{e.CP_sel.bip.FE0}
\bar F_N^\circ:=\frac{1}{N}\E\log \int_{\Sigma_N^2}\exp  H_N^\circ(x,y) \ud P_N^X(x)\ud P_N^Y(y).
\end{equation}
As in Section \ref{sec:intro}, it will be convenient to modify the free energy \eqref{e.CP_sel.bip.FE0} without changing its limiting value, and then enrich it so that it depends on a time parameter $t\geq 0$ and measure parameters $\mu_1,\mu_2\in \M_+$. 
For each $t\geq 0$, we introduce a random variable $\smash{\Pi_t\sim \Poi (tN^2)}$ as well as an independent family of i.i.d.\@ random matrices $\smash{(G^k)_{k \geq 1}}$ each having conditionally independent entries $\smash{(G_{i,j}^k)_{i,j\leq N}}$ taking values in $\{0,1\}$ with conditional distribution
\begin{equation}\label{e.CP_sel.bip.G.distribution}
\P\big\{G_{i,j}^k=1 \mid X,Y\big\}:=\frac{c+\Delta X_{i}Y_j}{N}.
\end{equation}
Given a collection of random indices $\smash{(i_k,j_k)_{k\geq 1}}$ sampled uniformly at random from $\smash{\{1,\ldots,N\}^2}$, independently of the other random variables, we define the time-dependent Hamiltonian $H_N^t$ on $\Sigma_N^2$ by
\begin{equation}\label{e.CP_sel.bip.H.t}
H_N^t(x,y):=\sum_{k\leq \Pi_t}\log \bigg[\big(c+\Delta x_{i_k}y_{j_k}\big)^{G^k_{i_k,j_k}}\Big(1-\frac{c+\Delta x_{i_k}y_{j_k}}{N}\Big)^{1-G^k_{i_k,j_k}}\bigg],
\end{equation}
and denote by 
\begin{equation}
\bar F_N(t):= \frac{1}{N} \E \log \int_{\Sigma_N}\exp H_N^t(x,y)\ud P_N^X(x)\ud P_N^Y(y)
\end{equation}
its corresponding free energy. 
This is the Hamiltonian associated with the task of inferring the signal $(X,Y)$ from the data 
\begin{equation}
\D_N^t:=\big(\Pi_t, (i_k,j_k)_{k\leq \Pi_t},(G^k_{i_k,j_k})_{k\leq \Pi_t}\big).
\end{equation}
Given $\mu_1,\mu_2\in \M_+$, we consider sequences $(\Lambda_i(N\mu_1))_{i\leq N}$ and $(\Lambda_i(N\mu_2))_{i\leq N}$ of independent Poisson point processes with intensity measures $N\mu_1$ and $N\mu_2$, and we define the measure-dependent Hamiltonian $H_N^{\mu_1,\mu_2}$ on $\Sigma_N^2$ by
\begin{align}\label{e.CP_sel.bip.H.mu}
H_N^{\mu_1,\mu_2}(x,y):=\sum_{i\leq N}\sum_{z\in \Lambda_i(N\mu_1)}\log &\bigg[\big(c+\Delta x_{i}z\big)^{G^{z,X}_{i}}\Big(1-\frac{c+\Delta x_{i}z}{N}\Big)^{1-G^{z,X}_{i}}\bigg] \notag\\
&+\sum_{i\leq N}\sum_{z\in \Lambda_i(N\mu_2)}\log \bigg[\big(c+\Delta y_{i}z\big)^{G^{z,Y}_{i}}\Big(1-\frac{c+\Delta y_{i}z}{N}\Big)^{1-G^{z,Y}_{i}}\bigg],
\end{align}
where $(G_i^{z,X})_{i\leq N}$ and $(G_i^{z,Y})_{i\leq N}$ are conditionally independent random variables taking values in $\{0,1\}$ with conditional distributions
\begin{equation}
\P\{G_i^{z,X}=1\mid X, z\}:=\frac{c+\Delta X_i z}{N} \quad \text{and} \quad \P\{G_i^{z,Y}=1\mid Y, z\}:=\frac{c+\Delta Y_i z}{N}.
\end{equation}
This measure-dependent Hamiltonian is the Hamiltonian associated with the task of inferring the signal $(X,Y)$ from the data
\begin{equation}
{\D}_N^{\mu_1,\mu_2}:=\big(\Lambda_i(N\mu_1), (G_{i}^{z,X})_{z\in \Lambda_i(N\mu_1)}, \Lambda_i(N\mu_2), (G_{i}^{z,Y})_{z\in \Lambda_i(N\mu_2)}\big)_{i\leq N}.
\end{equation}
Finally, for each $(t,\mu_1,\mu_2)\in \Rp\times \M_+^2$, we introduce the enriched Hamiltonian
\begin{equation}\label{e.CP_sel.bip.en.H}
H_N^{t,\mu_1,\mu_2}(x,y):=H_N^t(x,y)+H_N^{\mu_1,\mu_2}(x,y)
\end{equation}
as well as its associated free energy
\begin{equation}\label{e.CP_sel.bip.en.FE}
\bar F_N(t,\mu_1,\mu_2):=\frac{1}{N}\E\log \int_{\Sigma_N^2}\exp H_N^{t,\mu_1,\mu_2}(x,y)\ud P_N^X(x)\ud P_N^Y(y).
\end{equation}
The Hamiltonian \eqref{e.CP_sel.bip.en.H} is associated with the inference of the signal $(X,Y)$ from the data
\begin{equation}\label{e.CP_sel.bip.en.data}
{\D}_N^{t,\mu_1,\mu_2}:=(\D_N^t, \D_N^{\mu_1,\mu_2}),
\end{equation}
where the randomness in these two datasets is taken to be independent conditionally on $(X,Y)$. As usual, we write $\langle \cdot \rangle$ for the Gibbs average associated with the Hamiltonian \eqref{e.CP_sel.bip.en.H}. This means that for any bounded and measurable function $f=f(x^1,y^1\ldots,x^n,y^n)$ of finitely many replicas, 
\begin{equation}\label{e.CP_sel.bip.Gibbs}
 \langle f\rangle   :=\frac{\int_{\Sigma_N^{2n}}f(x^1,y^1,\ldots,x^n,y^n)\prod_{\ell\leq n}\exp H_N^{t,\mu_1,\mu_2}(x^\ell,y^\ell)\ud P_N^X(x^\ell)\ud P_N^Y(y^\ell)}{\big(\int_{\Sigma_N^2} \exp H_N^{t,\mu_1,\mu_2}(x,y)\ud P_N^X(x)\ud P_N^Y(y)\big)^n}.
\end{equation}
The enriched free energy in \eqref{e.CP_sel.bip.en.FE} is the main object of study in the bipartite stochastic block model much in the same way as the enriched free energy \eqref{e.intro.SBM.en.FE} was the main object of study in the two-community stochastic block model.

To show that the variational formula \eqref{e.CP_sel.sup.candidate} does not generalize to the setting of the bipartite stochastic block model, we focus on the case with
\begin{equation}
P^X:=\Ber(1/2) \quad \text{and} \quad P^Y:=\Ber(p)
\end{equation}
for some $p\neq 1/2$. This means that the labels $X_i$ are centered while the labels $Y_i$ each have non-zero mean
\begin{equation}
\m:=2p-1\neq 0.
\end{equation}
Arguing as in Theorem \ref{t.intro.main}, it can be shown that, if the sequence of enriched free energies $(\bar F_N)_{N\geq 1}$ converges pointwise to some limit $f:\Rp\times \M_+^2\to \R$, then this limit free energy is given by evaluating the functional $\Par_{t,\mu_1,\mu_2}:\M_+^2\to \R$ defined by
\begin{equation}
\Par_{t,\mu_1,\mu_2}(\nu_1,\nu_2):=\psi_X(\mu_1+t\nu_1)+\psi_Y(\mu_2+t\nu_2)-t\int_{-1}^1\int_{-1}^1 g(xy)\ud \nu_1\ud \nu_2
\end{equation}
at some critical point $(\nu_1,\nu_2)\in \M_{1/2}\times \M_p$ of this functional. The initial conditions $\psi_X:\M_+\to \R$ and $\psi_Y:\M_+\to \R$ are given by \eqref{e.SBM.IC} for the priors $P^X$ and $P^Y$, respectively. The generalization of the variational formula \eqref{e.CP_sel.sup.candidate} would then be that for all $(t,\mu_1,\mu_2)\in \Rp\times \M_+^2$, we have
\begin{equation}\label{e.CP_sel.bip.sup.candidate}
\lim_{N\to +\infty}\bar F_N(t,\mu_1,\mu_2)=\sup_{\nu_1\in \M_{1/2}}\sup_{\nu_2\in \M_p}\Par_{t,\mu_1,\mu_2}(\nu_1,\nu_2).
\end{equation}
We now argue that this formula cannot hold for small $t > 0$.

\begin{proposition}\label{p.bipartite.no.var.formula}
If $\Delta>0$, $c>e$ and $\m$ is sufficiently small, then the variational formula \eqref{e.CP_sel.bip.sup.candidate} for $\mu_1=\mu_2=0$ does not hold in a neighborhood of $t=0$.
\end{proposition}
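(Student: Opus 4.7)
The plan is to compare the right derivative at $t = 0$ of the limit free energy $t \mapsto f(t, 0, 0)$ with that of the proposed supremum $t \mapsto \sup_{\nu_1, \nu_2} \Par_{t, 0, 0}(\nu_1, \nu_2)$; since both vanish at $t = 0$, any strict gap in their right derivatives forces \eqref{e.CP_sel.bip.sup.candidate} to fail on some interval $(0, \epsilon)$.

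First I would establish the bipartite analogs of Proposition \ref{p.intro.main.small.t} and Corollary \ref{c.SBM.limit.FE.diff.at.0} by mirroring the contraction and Lipschitz arguments of Section \ref{sec:special.cases} on the product space $\M_{1/2} \times \M_p$. The unique fixed point of $\Gamma_{0, 0}$ is easily identified as $(\delta_0, \delta_\m)$: since the Poisson point processes are empty, the $X$- and $Y$-side operators collapse to Diracs at $\E_{P^X}\epsilon = 0$ and $\E_{P^Y}\epsilon = \m$, respectively. Using \eqref{e.intro.IC.Gateaux.der} adapted to the bipartite setting, we get $D_{\mu_1}\psi_X(0; \nu_1) = g(0)$ and $D_{\mu_2}\psi_Y(0; \nu_2) = \int g(\m y)\, d\nu_2(y)$, so the bipartite analog of Corollary \ref{c.SBM.limit.FE.diff.at.0} yields
\begin{equation*}
A := \partial_t f(0, 0, 0) = \partial_t \Par_{t, 0, 0}|_{t=0}(\delta_0, \delta_\m) = g(0) + g(\m^2) - g(0) = g(\m^2).
\end{equation*}

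Next I would evaluate the same directional derivative at the test pair $(\delta_0, P^Y)$, where $P^Y = \tfrac{1+\m}{2}\delta_1 + \tfrac{1-\m}{2}\delta_{-1} \in \M_p$ is the prior itself. The same formula gives
\begin{equation*}
\partial_t \Par_{t, 0, 0}|_{t=0}(\delta_0, P^Y) = \tfrac{1+\m}{2}\, g(\m) + \tfrac{1-\m}{2}\, g(-\m).
\end{equation*}
Since $g'(z) = \Delta \log(c + \Delta z)$ and $g''(z) = \Delta^2/(c + \Delta z) > 0$, a Taylor expansion at $\m = 0$ gives
\begin{equation*}
\tfrac{1+\m}{2}\, g(\m) + \tfrac{1-\m}{2}\, g(-\m) - g(\m^2) = \tfrac{g''(0)}{2}\m^2 + O(\m^4) = \tfrac{\Delta^2}{2c}\,\m^2 + O(\m^4),
\end{equation*}
which is strictly positive for $|\m|$ small and nonzero. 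Heuristically, $P^Y$ is an extreme point of $\M_p$ that exploits the strict convexity of $g$, while $\delta_\m$ sits at the mean and does not. Combining this with Taylor expansions in $t$ and the smoothness of $t \mapsto \Par_{t, 0, 0}(\delta_0, P^Y)$ near $0$, for $\m$ small and nonzero and all sufficiently small $t > 0$ we obtain
\begin{equation*}
\sup_{\nu_1, \nu_2} \Par_{t, 0, 0}(\nu_1, \nu_2) \;\geq\; \Par_{t, 0, 0}(\delta_0, P^Y) \;=\; A\, t + \tfrac{\Delta^2 \m^2}{2c}\, t + o(t) \;>\; f(t, 0, 0),
\end{equation*}
contradicting \eqref{e.CP_sel.bip.sup.candidate}.

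The main obstacle is adapting the small-$t$ uniqueness and the differentiability-at-$t=0$ results of Section \ref{sec:special.cases} to the bipartite product operator, which is a routine but nontrivial extension requiring bookkeeping of two coupled fixed-point equations. The assumption $c > e$ most likely enters through the requirement that $g(\m^2)$ remains strictly positive (so that $f(t, 0, 0)$ has a genuinely positive leading order and the competing Taylor expansions can be compared unambiguously) and that the contraction bound underlying the bipartite Banach fixed-point argument stays controlled uniformly in $\m$ near $0$; the first-order comparison driving the failure of the variational formula, however, only essentially requires $\Delta > 0$ and $\m \neq 0$ small.
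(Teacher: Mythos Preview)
Your overall strategy matches the paper's: compute $\partial_t f(0,0,0)$ and compare it with a lower bound on $\partial_t \sup_{\nu_1,\nu_2}\Par_{t,0,0}$ obtained from a test pair. However, your value $A = \partial_t f(0,0,0) = g(\m^2)$ is incorrect; the true value is $A = g(0) = c\log c - c$. The paper obtains this directly from the finite-volume time derivative (the bipartite analogue of \eqref{e.SBM.en.FE.der.t}): at $(t,\mu_1,\mu_2)=(0,0,0)$ the Gibbs measure reduces to the prior, so $\langle x_iy_j\rangle = \E X_1\cdot\E Y_1 = 0$ and $\partial_t f(0,0,0)=c\log c-c$.

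Your discrepancy comes from the bipartite cavity structure. An $X$-cavity vertex is connected to $Y$-vertices, so the measure feeding into $\psi_X$ in the correct cavity functional must be the $Y$-magnetization law $\nu_2\in\M_p$, not $\nu_1$; the functional that actually emerges from the cavity is $\psi_X(\mu_1+t\nu_2)+\psi_Y(\mu_2+t\nu_1)-t\iint g\,\mathrm d\nu_1\mathrm d\nu_2$. The displayed formula in the paper has $\nu_1,\nu_2$ interchanged in the $\psi$-arguments, and you inherited this without checking it against the cavity. With the correct cross-coupling, the Corollary analogue at $(\delta_0,\delta_\m)$ gives $D_\mu\psi_X(0;\delta_\m)+D_\mu\psi_Y(0;\delta_0)-g(0)=g(0)+g(0)-g(0)=g(0)$, consistent with the direct computation. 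The paper sidesteps this entirely by not invoking any Corollary analogue for $A$.

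For the lower bound, the paper takes the test pair $(\delta_0,\delta_\m)$ in the displayed $\Par$ and obtains $D_\mu\psi_Y(0;\delta_\m)=g(\m^2)=g(0)+\Delta\m^2\log c+O(\m^4)$; since $\Delta>0$ and $\log c>0$ this exceeds $g(0)$ for small $\m$, giving the contradiction. Your test pair $(\delta_0,P^Y)$ would also work once $A$ is corrected to $g(0)$ and indeed gives a larger gap, but as written you are comparing $g(\m^2)$ with $g(\m^2)+\tfrac{\Delta^2}{2c}\m^2$, which is the wrong inequality. Your speculation about $c>e$ is also off: it enters only to guarantee $g'(0)=\Delta\log c>0$, i.e.\ $g(\m^2)>g(0)$, not to make $g(\m^2)$ positive or to control any contraction constant.
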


\begin{proof}
The strategy will be to show that the time derivative of the limit free energy at the origin, which can be shown to exist through similar arguments to those in the proof of Corollary \ref{c.SBM.limit.FE.diff.at.0}, does not agree with the time derivative implied by the variational formula \eqref{e.CP_sel.bip.sup.candidate}. Determining the time derivative of the enriched free energy \eqref{e.CP_sel.bip.en.FE} can be done through a similar computation to that in \cite[Lemma 2.1]{dominguez2024mutual}, and reveals that for all $(t,\mu_1,\mu_2)\in \Rp\times \M_+^2$,
\begin{equation*}
\partial_t\overline{F}_N(t,\mu_1,\mu_2)=\E \big(c+\Delta \langle x_iy_j\rangle\big)\log \big( c+\Delta \langle x_iy_j\rangle\big)-c+\BigO(N^{-1}).
\end{equation*}
The assumption that $X$ is centered and independent of $Y$ has played its part. Together with the fact that the Gibbs measure associated with the parameters $(t,\mu_1,\mu_2)=(0,0,0)$ is simply the average with respect to the prior $P^X\otimes P^Y$ and an analogue of Proposition \ref{p.regularity.cont.of.derivative}, this implies that the time derivative of the limit free energy $f:\Rp\times \M_+^2\to \R$ at the origin is given by
\begin{equation}\label{e.CP_sel.bip.lim.FE.der.t}
\partial_tf(0,0,0)=\E(c+\Delta \E (X_1Y_1))\log(c+\Delta \E (X_2Y_2))-c=c\log(c)-c.
\end{equation}
On the other hand, the validity of the variational formula \eqref{e.CP_sel.bip.sup.candidate} in a neighborhood of $t=0$ implies that for any $t>0$ in such a neighborhood,
\begin{equation*}
\frac{f(t,0,0)-f(0,0,0)}{t}\geq \frac{\Par_{t,0,0}(\delta_0, \delta_{\m})}{t}=\frac{\psi_X(t\delta_0)+\psi_Y(t\delta_{\m})-tg(0)}{t}=\frac{\psi_Y(t\delta_{\m})}{t},
\end{equation*}
where we have used that $f(0,0,0)=0$ and $\psi_X(t\delta_0)=tg(0)=t(c\log(c)-c)$. Letting $t$ tend to zero and recalling the expression \eqref{e.intro.IC.Gateaux.der} for the directional derivative of the asymptotic initial condition gives the lower bound
\begin{equation*}
\partial_tf(0,0,0)\geq D_\mu\psi_Y(0;\delta_{\m})=\E \langle c+\Delta \epsilon \m \rangle_* \log\langle c+\Delta \epsilon \m\rangle_* -c-\Delta \m^2.
\end{equation*}
Noticing that the Gibbs average $\langle \cdot\rangle_*$ is simply the average with respect to the prior $P^Y$ and Taylor-expanding the logarithm 
implies that
\begin{align*}
\partial_tf(0,0,0)&\geq (c+\Delta \m^2)\log(c+\Delta\m^2)-c-\Delta \m^2= c\log(c)-c+\Delta \m^2\log(c) +\BigO(\m^4).
\end{align*}
Whenever $\m$ is small enough, $c>e$ and $\Delta>0$, we therefore have
\begin{equation*}
\partial_tf(0,0,0)>c\log(c)-c.
\end{equation*}
This contradicts \eqref{e.CP_sel.bip.lim.FE.der.t} and completes the proof.
\end{proof}

\bibliographystyle{plain}
\bibliography{crit}
\end{document}